\newtheorem{prethm}{{\bf Theorem}}
\newenvironment{thm}{\begin{prethm}{\hspace{-0.5
               em}{\bf.}}}{\end{prethm}}
\newtheorem{prepro}[prethm]{{\bf Theorem}}
\newtheorem{preprop}[prethm]{{\bf Proposition}}
\newtheorem{precor}[prethm]{{\bf Corollary}}
\newenvironment{cor}{\begin{precor}{\hspace{-0.5
               em}{\bf.}}}{\end{precor}}
\newtheorem{preconj}[prethm]{{\bf Conjecture}}
\newtheorem{preremark}[prethm]{{\bf Remark}}
\newenvironment{remark}{\begin{preremark}\rm{\hspace{-0.5
               em}{\bf.}}}{\end{preremark}}
\newtheorem{preexample}[prethm]{{\bf Example}}
\newenvironment{example}{\begin{preexample}\rm{\hspace{-0.5
               em}{\bf.}}}{\end{preexample}}
\newtheorem{prelem}[prethm]{{\bf Lemma}}
\newenvironment{lem}{\begin{prelem}{\hspace{-0.5
               em}{\bf.}}}{\end{prelem}}
\newtheorem{prelam}{{\bf Lemma}}
\newtheorem{preproof}{{\bf Proof.}}
\newenvironment{proof}[1]{\begin{preproof}{\rm
               #1}\hfill{$\Box$}}{\end{preproof}}
\title{\bf \large  More on the Annihilator-Ideal Graph of\\ a Commutative Ring
\thanks
{{\it Key Words}: Annihilator-Ideal Graph; Annihilating-Ideal Graph; Minimal prime ideal; Girth; Star graph.}
\thanks {2010{ \it Mathematics Subject Classification}: 13A15; 13B99; 05C99.}}
\author{{\normalsize  {\sc M.J. Nikmehr }} and\,  {\sc S.M. Hosseini}   \\
{\footnotesize{\it Faculty of Mathematics, K.N. Toosi
University of Technology,}}\\
{\footnotesize{{\it P.O. Box} 16315-1618, \it Tehran, Iran}}\\
{\footnotesize{}}\\
{\footnotesize
  $\mathsf{nikmehr@kntu.ac.ir}$ \quad\quad$\mathsf{smhosseini@mail.kntu.ac.ir}$}
}
\date{}
\begin{document}
\maketitle
\begin{abstract}
{\small \noindent Let $R$ be a commutative ring with identity and
$\Bbb A (R)$ be the set of ideals of $R$ with non-zero annihilator. The  annihilator-ideal graph of $R$,
denoted by $A_{I} (R) $, is a simple graph with the vertex set $\Bbb A(R)^{\ast} := \Bbb A (R) \setminus\lbrace (0) \rbrace $,
and two distinct vertices $I$ and $J$ are adjacent if and only if
$\mathrm{Ann} _{R} (IJ) \neq \mathrm{Ann} _{R} (I) \cup \mathrm{Ann} _{R} (J)$.
 In this paper, we study the affinity between the  annihilator-ideal graph and  the annihilating-ideal graph $\Bbb A \Bbb G (R)$ (a well-known graph with the same vertices and two distinct vertices $I,J$ are adjacent if and only if $IJ=0$) associated with $R$.
All rings
whose $A_{I}(R) \neq \Bbb A \Bbb G (R)$ and
$\mathrm{gr} (A_{I}(R)) =4$ are characterized.
Among other results, we obtain necessary
and sufficient conditions under which  $A_{I} (R)$ is a star graph.
 }
\end{abstract}
\vspace{9mm} \noindent{\bf\large 1. Introduction}\\

\noindent   Many interesting algebraic and combinatorics problems arise when we associate a combinatorics object with an algebraic structure. Therefore, one of the most popular and active area in algebraic combinatorics is study of graphs associated with rings. Papers in this field apply combinatorial
methods to obtain algebraic results in ring theory (see for instance \cite {class}, \cite{Badawi}, \cite{chel} and \cite{yu}).
Moreover, for the most recent study in this direction see  \cite{ab}, \cite{mir}  and  \cite{yu}.

\noindent Throughout this paper, $R$ denotes a unitary commutative ring
which is not an integral domain. The sets of all zero-divisors, nilpotent elements and minimal prime ideals  of $R$ are denoted by $Z(R)$, $\mathrm{Nil}(R)$ and $\mathrm {Min}(R)$, respectively. For a subset $T$ of a ring $R$ we let $T^*=T\setminus\{0\}$. An ideal with non-zero annihilator is called an \textit{annihilating ideal}. The set of annihilating ideals of $R$ is denoted by $\Bbb A(R)$.  The ring $R$ is said to be \textit{reduced} if it has no non-zero nilpotent element.
For any undefined notation or terminology in ring theory, we refer the reader to \cite{ati, lam}.

\noindent Let $G=(V,E)$ be a graph, where $V=V(G)$ is the set of vertices and $E=E(G)$ is the set of edges. If $x,y$ are adjacent vertices, then we write $x-\hspace{-.2cm}-y$. By  $\mathrm{diam}(G)$ and $\mathrm{gr}(G)$, we mean the diameter and the girth of $G$, respectively. A cycle (path) graph of order $n$ is denoted by $C_n$ ($P_n$). A complete bipartite graph with part sizes $m$ and $n$ is denoted by $K_{m,n}$. If the size of one of the parts is $1$, then the graph is said to
be a \textit{star graph}. Also, a complete graph of order $n$ is denoted by $K_n$. The distance between two vertices $x,y$ in $G$ is denoted by $\mathrm{d} _G (x,y)$.
For any $x\in V(G)$, $N_{G}(x)$ represents the set of all adjacent vertices  to $x$.
 For any undefined notation or terminology in graph theory, we refer the reader to \cite{west}.

\noindent The  \textit{annihilator graph} of a ring $R$ is defined as the graph $AG(R)$ with the vertex set $Z(R)^*=Z(R)\setminus\{0\}$, and two distinct vertices $x$ and $y$ are adjacent if and only if $ann_R(xy)\neq ann_R(x)\cup ann_R(y)$. This graph was first introduced and investigated in \cite{Badawi} and many of interesting properties of annihilator graph were studied.
 The \textit{ annihilator-ideal graph of $R$},
denoted by $A_{I}(R) $, is an undirected (simple) graph with the vertex set
$\Bbb A (R)^{\ast} = \Bbb A (R) \setminus \lbrace 0 \rbrace$, and two distinct
vertices $I$ and $J$ are adjacent if and only if
$\mathrm{Ann} _{R} (IJ) \neq \mathrm{Ann} _{R} (I) \cup \mathrm{Ann} _{R} (J)$.
This graph was first introduced and investigated in \cite{sal} and many
of interesting properties of annihilator-ideal graph were studied.
The \textit{annihilating-ideal graph of a ring $R$},
denoted by $\Bbb A \Bbb G (R)$,
 is a graph with the vertex set $\Bbb A (R)^{\ast}$ and two distinct vertices $I$ and $J$
are adjacent if and only if $IJ =(0)$ (see \cite{class, A1, beh1} for more details). It is not hard to see that the
annihilating-ideal graph is a subgraph
of  the  annihilator-ideal graph and so it is interesting to explore some further relations between two
graphs $\Bbb A \Bbb G (R)$ and $A_{I} (R) $.
For instance, it is proved that if $A_{I} (R) \neq \Bbb A \Bbb G (R)$
and $\Bbb A \Bbb G (R)$ is a star graph, then $A_{I} (R)$ is a
complete graph. Among other results, we obtain necessary
and sufficient condition in which $A_{I} (R) = \Bbb A \Bbb G (R)$
and $A_{I} (R)$ is a star graph.


\vspace{9mm} \noindent{\bf\large 2. Preliminars }\vspace{5mm}

\noindent First we recall the fundamental properties of $A_{I} (R) $
that are necessary in this paper.

\noindent The first result of this section has an essential role through the  paper.

\begin{lem}{\rm \cite [Lemma $2.1$]{sal}} \label{lem}
Let $R$ be a ring and $I$ and $J$ be distinct elements of $\Bbb A(R)^{\ast}$. Then the following statements hold.

\noindent {\rm (1)} $I-\hspace{-.2cm}-J$ is not an edge of
$A_{I} (R)$ if and only if $\mathrm{Ann}_{R} (IJ)= \mathrm{Ann}_{R} (I)$
or $\mathrm{Ann}_{R} (IJ)= \mathrm{Ann}_{R} (J)$.

\noindent {\rm (2)} If $I-\hspace{-.2cm}-J$ is an edge of
$\Bbb A \Bbb G (R)$, then $I-\hspace{-.2cm}-J$ is an edge
of $A_{I} (R) $. In particular, if $P$
is a path in $\Bbb A \Bbb G (R)$, then $P$ is a path in $A_{I} (R) $.

\noindent {\rm (3)} If $I-\hspace{-.2cm}-J$ is not an edge of
$A_{I} (R) $, then $\mathrm{Ann}_{R} (I) \subseteq \mathrm{Ann}_{R} (J)$
or $\mathrm{Ann}_{R} (J) \subseteq \mathrm{Ann}_{R} (I)$.

\noindent {\rm (4)} If $\mathrm{Ann}_{R} (I) \nsubseteq \mathrm{Ann}_{R}(J)$ and
$\mathrm{Ann}_{R} (J) \nsubseteq \mathrm{Ann}_{R}(I)$, then $I-\hspace{-.2cm}-J$ is
an edge of $A_{I} (R) $.

\noindent {\rm (5)} If $\mathrm{d} _{\Bbb A \Bbb G (R)} (I,J)=3$,
then $I-\hspace{-.2cm}-J$ is an edge of $A_{I} (R) $.

\noindent {\rm (6)} If $I-\hspace{-.2cm}-J$ is not an edge of
$A_{I} (R) $, then there is a
~$K\in\Bbb A (R)^{\ast} \setminus \lbrace I,J \rbrace$ such that
$I-\hspace{-.2cm}-K-\hspace{-.2cm}-J$ is a path in $\Bbb A \Bbb G (R)$,
and hence $I-\hspace{-.2cm}-K-\hspace{-.2cm}-J$ is also a path in $A_{I} (R) $.
\end{lem}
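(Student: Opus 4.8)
The plan is to reduce all six parts to two elementary facts. First, since $IJ\subseteq I$ and $IJ\subseteq J$, one always has $\mathrm{Ann} _{R}(I)\cup\mathrm{Ann} _{R}(J)\subseteq\mathrm{Ann} _{R}(IJ)$, so $I-\hspace{-.2cm}-J$ fails to be an edge of $A_{I}(R)$ exactly when $\mathrm{Ann} _{R}(IJ)=\mathrm{Ann} _{R}(I)\cup\mathrm{Ann} _{R}(J)$. Second, a union $A\cup B$ of two ideals is an ideal only if $A\subseteq B$ or $B\subseteq A$ (otherwise pick $a\in A\setminus B$ and $b\in B\setminus A$; then $a+b$ lies in neither $A$ nor $B$). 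For (1): if $I-\hspace{-.2cm}-J$ is not an edge, then $\mathrm{Ann} _{R}(I)\cup\mathrm{Ann} _{R}(J)$ coincides with the ideal $\mathrm{Ann} _{R}(IJ)$, hence is an ideal, so --- say --- $\mathrm{Ann} _{R}(I)\subseteq\mathrm{Ann} _{R}(J)$ and thus $\mathrm{Ann} _{R}(IJ)=\mathrm{Ann} _{R}(J)$; conversely, if $\mathrm{Ann} _{R}(IJ)=\mathrm{Ann} _{R}(I)$ then $\mathrm{Ann} _{R}(J)\subseteq\mathrm{Ann} _{R}(IJ)=\mathrm{Ann} _{R}(I)$, whence $\mathrm{Ann} _{R}(I)\cup\mathrm{Ann} _{R}(J)=\mathrm{Ann} _{R}(I)=\mathrm{Ann} _{R}(IJ)$ and $I-\hspace{-.2cm}-J$ is not an edge (symmetrically when $\mathrm{Ann} _{R}(IJ)=\mathrm{Ann} _{R}(J)$). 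Part (3) is then immediate from (1): non-adjacency gives $\mathrm{Ann} _{R}(IJ)=\mathrm{Ann} _{R}(I)$ or $\mathrm{Ann} _{R}(IJ)=\mathrm{Ann} _{R}(J)$, and in the first case $\mathrm{Ann} _{R}(J)\subseteq\mathrm{Ann} _{R}(IJ)=\mathrm{Ann} _{R}(I)$ (the other case being symmetric); part (4) is just the contrapositive of (3).

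For (2): if $I-\hspace{-.2cm}-J$ is an edge of $\Bbb A\Bbb G(R)$, i.e. $IJ=(0)$, then $\mathrm{Ann} _{R}(IJ)=R$; but $I\neq(0)$ and $J\neq(0)$ force $\mathrm{Ann} _{R}(I)\neq R\neq\mathrm{Ann} _{R}(J)$, so the identity $1$ of $R$ lies in $\mathrm{Ann} _{R}(IJ)$ but not in $\mathrm{Ann} _{R}(I)\cup\mathrm{Ann} _{R}(J)$; hence $I-\hspace{-.2cm}-J$ is an edge of $A_{I}(R)$. Since an edge of $\Bbb A\Bbb G(R)$ is an edge of $A_{I}(R)$ and the vertex sets agree, every path of $\Bbb A\Bbb G(R)$ is a path of $A_{I}(R)$, giving the ``in particular'' clause.

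The heart of the matter is a single construction that drives both (5) and (6). Suppose $I-\hspace{-.2cm}-J$ is not an edge of $A_{I}(R)$. By (1), after possibly interchanging $I$ and $J$, we may assume $\mathrm{Ann} _{R}(IJ)=\mathrm{Ann} _{R}(I)$; since $IJ\subseteq J$ this yields $\mathrm{Ann} _{R}(J)\subseteq\mathrm{Ann} _{R}(IJ)=\mathrm{Ann} _{R}(I)$. Put $K:=\mathrm{Ann} _{R}(J)$. Then $KJ=(0)$ by definition, and $KI=\mathrm{Ann} _{R}(J)\,I\subseteq\mathrm{Ann} _{R}(I)\,I=(0)$. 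Also $K\in\Bbb A(R)^{\ast}$: $K\neq(0)$ because $J\in\Bbb A(R)^{\ast}$, and $J\subseteq\mathrm{Ann} _{R}(K)$ shows $\mathrm{Ann} _{R}(K)\neq(0)$. Moreover $K\notin\{I,J\}$: by (2), non-adjacency of $I,J$ in $A_{I}(R)$ forces $IJ\neq(0)$, while $K=I$ would give $IJ=KJ=(0)$ and $K=J$ would give $J=\mathrm{Ann} _{R}(J)\subseteq\mathrm{Ann} _{R}(I)$, hence again $IJ=(0)$ --- both impossible. Thus $I-\hspace{-.2cm}-K-\hspace{-.2cm}-J$ is a path in $\Bbb A\Bbb G(R)$, and by (2) also in $A_{I}(R)$, which is exactly (6). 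For (5): if $\mathrm{d} _{\Bbb A\Bbb G(R)}(I,J)=3$ but $I-\hspace{-.2cm}-J$ were not an edge of $A_{I}(R)$, the same construction supplies a common $\Bbb A\Bbb G(R)$-neighbour $K$ of $I$ and $J$, whence $\mathrm{d} _{\Bbb A\Bbb G(R)}(I,J)\leq 2$, a contradiction. The one place demanding care --- and the main obstacle --- is (6): the naive choice $K=\mathrm{Ann} _{R}(I)$ need not annihilate $J$, so one must instead take $K=\mathrm{Ann} _{R}(J)$ and exploit the containment $\mathrm{Ann} _{R}(J)\subseteq\mathrm{Ann} _{R}(I)$ that non-adjacency hands us, then carry out the bookkeeping that keeps $K$ off the two excluded vertices.
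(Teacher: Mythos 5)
Your proof is correct and complete: the union-of-ideals observation settles (1), (3), (4), the unit argument gives (2), and the choice $K=\mathrm{Ann}_{R}(J)$ together with the containment $\mathrm{Ann}_{R}(J)\subseteq\mathrm{Ann}_{R}(I)$ forced by non-adjacency handles (5) and (6), including the check that $K\notin\{I,J\}$. Note that the paper itself gives no proof of this lemma (it is quoted from \cite[Lemma 2.1]{sal}), so there is no internal argument to compare with; your reasoning is the standard one and would serve as a self-contained substitute for the citation.
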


\noindent By \cite[Theorem 2.1]{beh1}, for every ring $R$,
the annihilating-ideal graph $\Bbb A \Bbb G (R)$ is a connected graph
and $\mathrm{diam} (\Bbb A \Bbb G (R)) \leq 3$. Moreover,
if $\Bbb A \Bbb G (R)$ contains a cycle, then $\mathrm{gr} (\Bbb A \Bbb G (R)) \leq 4$.
By using these facts and  part (6) of Lemma \ref{lem}, we have the following result.

\begin{thm} {\rm \cite [Corollary $2.2$]{sal}}\label{theo}
Let $R$ be a ring with $\vert \Bbb A(R)^{\ast} \vert \geq 2$.
Then $A_{I} (R) $ is a connected graph and
$ \mathrm{diam} (A_{I} (R)) \leq 2$. Moreover,
if $A_{I} (R) $ contains a cycle, then $\mathrm{gr} (A_{I} (R)) \leq 4$.

\end{thm}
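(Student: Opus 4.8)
The plan is to deduce both assertions from the two facts about $\Bbb A \Bbb G (R)$ recalled just before the statement — namely that $\Bbb A \Bbb G (R)$ is connected with $\mathrm{diam}(\Bbb A \Bbb G (R))\le 3$, and that $\mathrm{gr}(\Bbb A \Bbb G (R))\le 4$ whenever it contains a cycle — together with the observation that $\Bbb A \Bbb G (R)$ is a subgraph of $A_{I}(R)$ and with parts (2) and (6) of Lemma \ref{lem}.

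\emph{Connectedness and $\mathrm{diam}(A_{I}(R))\le 2$.} Let $I\neq J$ be vertices. If $I-\hspace{-.2cm}-J$ is an edge of $A_{I}(R)$, they are at distance $1$. Otherwise Lemma \ref{lem}(6) supplies $K\in\Bbb A (R)^{\ast}\setminus\{I,J\}$ with $I-\hspace{-.2cm}-K-\hspace{-.2cm}-J$ a path in $\Bbb A \Bbb G (R)$, hence, by Lemma \ref{lem}(2), a path in $A_{I}(R)$, so $I$ and $J$ are at distance $2$. (When $|\Bbb A (R)^{\ast}|=2$ there is no room for such a $K$, so the two vertices must be adjacent.) Since $|\Bbb A (R)^{\ast}|\ge 2$, this shows $A_{I}(R)$ is connected with $\mathrm{diam}(A_{I}(R))\le 2$; this half is immediate.

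\emph{The girth bound.} Suppose $A_{I}(R)$ contains a cycle but, for contradiction, $\mathrm{gr}(A_{I}(R))\ge 5$; then $A_{I}(R)$ has no triangle and no $4$-cycle. Fix any cycle $C\colon I_1-\hspace{-.2cm}-I_2-\hspace{-.2cm}-\cdots-\hspace{-.2cm}-I_n-\hspace{-.2cm}-I_1$ with $n\ge 5$, indices read modulo $n$. The key claim is that $I_iI_{i+1}=(0)$ for every $i$, that is, that $C$ is already a cycle of $\Bbb A \Bbb G (R)$. To see it, fix $i$; then $I_{i-1},I_i,I_{i+1}$ are pairwise distinct and $\{I_{i-1},I_{i+1}\}$ is a non-edge of $A_{I}(R)$ (a chord here would create a triangle). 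By Lemma \ref{lem}(6) there is $K\in\Bbb A (R)^{\ast}\setminus\{I_{i-1},I_{i+1}\}$ with $I_{i-1}-\hspace{-.2cm}-K-\hspace{-.2cm}-I_{i+1}$ a path in $\Bbb A \Bbb G (R)$; by Lemma \ref{lem}(2) this is a path in $A_{I}(R)$, so $K$ is a common neighbour of $I_{i-1}$ and $I_{i+1}$ in $A_{I}(R)$, distinct from both. If $K\neq I_i$ then $I_{i-1}-\hspace{-.2cm}-I_i-\hspace{-.2cm}-I_{i+1}-\hspace{-.2cm}-K-\hspace{-.2cm}-I_{i-1}$ is a $4$-cycle of $A_{I}(R)$, contradicting $\mathrm{gr}(A_{I}(R))\ge 5$. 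Hence $K=I_i$, and so $I_{i-1}I_i=(0)=I_iI_{i+1}$; letting $i$ vary proves the claim. Therefore $\Bbb A \Bbb G (R)$ contains a cycle, so $\mathrm{gr}(\Bbb A \Bbb G (R))\le 4$ by \cite[Theorem~2.1]{beh1}; since $\Bbb A \Bbb G (R)$ is a subgraph of $A_{I}(R)$, this exhibits a cycle of length $\le 4$ in $A_{I}(R)$, contradicting $\mathrm{gr}(A_{I}(R))\ge 5$. Thus $\mathrm{gr}(A_{I}(R))\le 4$.

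\emph{Main obstacle.} The diameter statement is a one-line consequence of Lemma \ref{lem}(6); all the content is in the girth statement, and its crux is the claim that $C$ must already lie in $\Bbb A \Bbb G (R)$. The mechanism is that the absence of short cycles pins the length-$2$ connector between $I_{i-1}$ and $I_{i+1}$ down to the single vertex $I_i$, while Lemma \ref{lem}(6) simultaneously forces that connector to witness genuine annihilations $I_{i-1}K=KI_{i+1}=(0)$. Note that $\mathrm{diam}(A_{I}(R))\le 2$ on its own gives no information about the girth; the real inputs for the second half are $\mathrm{diam}(\Bbb A \Bbb G (R))\le 3$ and $\mathrm{gr}(\Bbb A \Bbb G (R))\le 4$, channelled through Lemma \ref{lem}. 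Everything else is bookkeeping with indices modulo $n$.
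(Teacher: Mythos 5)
Your proof is correct and uses exactly the ingredients the paper indicates (the connectivity, diameter and girth facts for $\Bbb A \Bbb G (R)$ from \cite[Theorem 2.1]{beh1} together with parts (2) and (6) of Lemma \ref{lem}), so it follows essentially the same route as the paper, which only sketches this cited result. Your fleshed-out girth argument --- forcing the length-two connector from Lemma \ref{lem}(6) to coincide with $I_i$ when girth $\geq 5$, so that any cycle of $A_I(R)$ already lies in $\Bbb A \Bbb G (R)$ --- is a valid completion of that sketch.
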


\begin{thm} {\rm \cite [Corollary $2.8$]{sal}}  \label{thh1}
Let $R$ be a reduced ring and $A_{I} (R) \neq \Bbb A \Bbb G (R)$. Then $\mathrm{gr}(A_{I} (R))=3$. Furthermore,
there is a cycle $C$ of length three in $A_{I} (R)$
such that each edge of $C$ is not an edge of $\Bbb A \Bbb G (R)$.
\end{thm}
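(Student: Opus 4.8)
The plan is to exhibit an explicit triangle in $A_I(R)$, no edge of which lies in $\Bbb A\Bbb G(R)$; since a simple graph containing a triangle has girth exactly $3$, this settles both assertions at once.

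First, use $A_I(R)\neq\Bbb A\Bbb G(R)$ to fix distinct vertices $I,J$ with $I-\hspace{-.2cm}-J$ an edge of $A_I(R)$ but $IJ\neq(0)$. Since $\mathrm{Ann}_R(IJ)\supseteq\mathrm{Ann}_R(I)\cup\mathrm{Ann}_R(J)$ and, by Lemma \ref{lem}(1), the edge forces $\mathrm{Ann}_R(IJ)$ to differ from each of $\mathrm{Ann}_R(I)$ and $\mathrm{Ann}_R(J)$, that inclusion is strict, so one may pick $z\in\mathrm{Ann}_R(IJ)$ with $zI\neq(0)$ and $zJ\neq(0)$. From $zIJ=0$ one gets $z^2IJ=zI^2J=zIJ^2=0$, and here reducedness enters: $zI\neq0$ forces $z^2I\neq0$ and $zI^2\neq0$ (if $z^2a=0$ then $(za)^2=z^2a^2=a\cdot(z^2a)=0$, so $za=0$), likewise with $J$, and $IJ\neq0$ forces $I^2J\neq0$, $IJ^2\neq0$ and $I^2J^2\neq0$.

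The candidate triangle is
\[ A:=zI+IJ,\qquad B:=zJ+IJ,\qquad D:=zI+zJ. \]
These are vertices: $\mathrm{Ann}_R(A)\supseteq\mathrm{Ann}_R(I)\neq0$, $\mathrm{Ann}_R(B)\supseteq\mathrm{Ann}_R(J)\neq0$, $IJ\subseteq\mathrm{Ann}_R(D)$ with $IJ\neq0$, and none of $A,B,D$ is zero. Expanding and cancelling the vanishing products above gives $AB=(IJ)^2$, $AD=(zI)^2$, $BD=(zJ)^2$; since $R$ is reduced these are non-zero, so no pair among $A,B,D$ is an edge of $\Bbb A\Bbb G(R)$, and (using $\mathrm{Ann}_R(K^2)=\mathrm{Ann}_R(K)$ in a reduced ring) $\mathrm{Ann}_R(AB)=\mathrm{Ann}_R(IJ)$, $\mathrm{Ann}_R(AD)=\mathrm{Ann}_R(zI)$, $\mathrm{Ann}_R(BD)=\mathrm{Ann}_R(zJ)$. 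Next, check pairwise adjacency in $A_I(R)$ and distinctness, both via a few strict inclusions. From $\mathrm{Ann}_R(X+Y)=\mathrm{Ann}_R(X)\cap\mathrm{Ann}_R(Y)$ we get $\mathrm{Ann}_R(A)=\mathrm{Ann}_R(zI)\cap\mathrm{Ann}_R(IJ)$, which is strictly inside both $\mathrm{Ann}_R(IJ)=\mathrm{Ann}_R(AB)$ and $\mathrm{Ann}_R(zI)=\mathrm{Ann}_R(AD)$, because $z\in\mathrm{Ann}_R(IJ)\setminus\mathrm{Ann}_R(zI)$ and $J\subseteq\mathrm{Ann}_R(zI)\setminus\mathrm{Ann}_R(IJ)$ (the latter since $zIJ=0$ but $IJ^2\neq0$); by Lemma \ref{lem}(1) this makes $A$ adjacent to $B$ and to $D$, and the symmetric argument ($I\leftrightarrow J$) gives $B-\hspace{-.2cm}-D$. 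For distinctness: $A=B$ would give $(zI)^2=AD=BD=(zJ)^2$, hence $\mathrm{Ann}_R(zI)=\mathrm{Ann}_R(zJ)$, contradicting $J\subseteq\mathrm{Ann}_R(zI)$, $J\not\subseteq\mathrm{Ann}_R(zJ)$ (as $zJ^2\neq0$); $A=D$ would give $(IJ)^2=AB=DB=(zJ)^2$, hence $\mathrm{Ann}_R(IJ)=\mathrm{Ann}_R(zJ)$, contradicting $z\in\mathrm{Ann}_R(IJ)\setminus\mathrm{Ann}_R(zJ)$; symmetrically $B\neq D$. Then $A-\hspace{-.2cm}-B-\hspace{-.2cm}-D-\hspace{-.2cm}-A$ is the required triangle and $\mathrm{gr}(A_I(R))=3$.

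The genuinely delicate point is the choice of $A,B,D$. The obvious first guesses $\{I,J,IJ\}$ and $\{I,J,I+J\}$ do have all pairwise products non-zero, yet they are \emph{not} triangles of $A_I(R)$: in a reduced ring $X\subseteq\sqrt Y$ forces $\mathrm{Ann}_R(XY)=\mathrm{Ann}_R(X)$, so such a pair is a non-edge. The combinations $zI+IJ$, $zJ+IJ$, $zI+zJ$ are arranged precisely so that the radicals $\sqrt{IJ}$, $\sqrt{zI}$, $\sqrt{zJ}$ become pairwise incomparable while all products stay non-zero; verifying those incomparabilities, together with the distinctness of $A,B,D$, is where reducedness of $R$ and the defining property of $z$ are actually consumed.
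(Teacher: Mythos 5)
Your proof is correct, but note that the paper does not actually prove Theorem~\ref{thh1}: it imports it verbatim from \cite[Corollary~2.8]{sal}, and the tool it quotes for exactly this purpose is Lemma~\ref{ll1}. That standard route is much shorter than yours: after choosing, as you do, $z\in\mathrm{Ann}_R(IJ)$ with $zI\neq(0)\neq zJ$, one simply takes $K=Rz\subseteq\mathrm{Ann}_R(IJ)$; reducedness guarantees $K\notin\{I,J\}$ (if $I\subseteq\mathrm{Ann}_R(IJ)$ then $(IJ)^2=I^2J^2\subseteq I^2J=(0)$, so $IJ=(0)$, and likewise for $J$), and Lemma~\ref{ll1} immediately yields the triangle $I-\hspace{-.2cm}-K-\hspace{-.2cm}-J-\hspace{-.2cm}-I$ none of whose edges lies in $\Bbb A \Bbb G(R)$. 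You instead build a fresh triangle $A=zI+IJ$, $B=zJ+IJ$, $D=zI+zJ$ that avoids $I$ and $J$ altogether; your computations $AB=(IJ)^2$, $AD=(zI)^2$, $BD=(zJ)^2$, the reduced-ring facts $\mathrm{Ann}_R(K^2)=\mathrm{Ann}_R(K)$ and $K^2\neq(0)$ for $K\neq(0)$, and the distinctness checks are all sound, so the argument works; it just pays in bookkeeping for the (unneeded) extra feature that the triangle is disjoint from $\{I,J\}$. One small gap in the write-up, easily repaired from facts you already record: when you invoke Lemma~\ref{lem}(1) for the edges between $A,D$ and between $B,D$, you verify only $\mathrm{Ann}_R(AD)\neq\mathrm{Ann}_R(A)$ and (by symmetry) $\mathrm{Ann}_R(BD)\neq\mathrm{Ann}_R(B)$; the lemma also requires $\mathrm{Ann}_R(AD)\neq\mathrm{Ann}_R(D)$ and $\mathrm{Ann}_R(BD)\neq\mathrm{Ann}_R(D)$. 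These do hold, since $\mathrm{Ann}_R(D)=\mathrm{Ann}_R(zI)\cap\mathrm{Ann}_R(zJ)$ and $J\subseteq\mathrm{Ann}_R(zI)\setminus\mathrm{Ann}_R(zJ)$ (as $zJ^2\neq(0)$), with the symmetric statement for $I$ --- but you state those inclusions only in the distinctness paragraph, so they should be cited at the adjacency step as well.
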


\noindent Next, we provide an example of a non-reduced ring $R$
where $I-\hspace{-.2cm}-J$ is an edge of $A_{I} (R)$
that is not an edge of $\Bbb A \Bbb G (R)$ for some distinct
$I,J \in \Bbb A(R)^{\ast}$, but every path in $A_{I} (R)$
of length two from $I$ to $J$ is also a path in $\Bbb A \Bbb G (R)$.

\begin{example}
Let $R \cong \Bbb Z_{16}$. Clearly,  $\mathfrak{m}=Z(R), \mathfrak{m}^{2}, \mathfrak{m}^{3}$ are non-zero proper ideals of $R$ and $\mathfrak{m}^{4}=(0)$. It is easily seen that $\mathfrak{m}-\hspace{-.2cm}-\mathfrak{m}^{2}$ is an edge of $A_{I} (R)$
that is not an edge of $\Bbb A \Bbb G (R)$.
Moreover, $ \mathfrak{m}-\hspace{-.2cm}-\mathfrak{m}^{3}-\hspace{-.2cm}-\mathfrak{m}^{2}$ is
the only path in  $\Bbb A \Bbb G (R)$ of length two
from $\mathfrak{m}$ to $\mathfrak{m}^{2}$. Indeed, $A_{I} (R) =K_{3}$ and $\Bbb A \Bbb G (R) =K_{1,2}$.
\end{example}

\noindent The following is an example of a non-reduced ring $R$
such that $ A_{I} (R) \neq \Bbb A \Bbb G
(R) $ and if $I-\hspace{-.2cm}-J$ is an edge of $A_{I} (R)$
that is not an edge of $\Bbb A \Bbb G (R)$ for some distinct
$I, J \in \Bbb A (R)^{\ast}$, then there is no path in
$A_{I} (R)$ of length two from $I$ to $J$.

\begin{example}\label{exam}
 Let $R \cong F \times S$, where $F$ is a field and $S$ is a ring with a unique
non-trivial ideal, say $I$.
Clearly, $I_{1}= F \times (0)$, $I_{2} = F \times I$,
$I_{3}= (0) \times I$ and $I_{4}= (0) \times S$
are non-zero proper ideals of $R$.
Then $I_{2}-\hspace{-.2cm}-I_{4}$ is an edge of $A_{I} (R)$
that is not an edge of $\Bbb A \Bbb G (R)$, but there is no path
in $A_{I} (R)$ of length two from $I_{2}$ to $I_{4}$.
Indeed, $\Bbb A \Bbb G (R) \cong P_{4}$ and $A_{I} (R) \cong C_{4}$,
\end{example}

\noindent The next theorem characterizes all rings
$R$ with $A_{I} (R) \neq \Bbb A \Bbb G (R)$ and
$\mathrm{gr} (A_{I}(R)) =4$.

\noindent To prove Theorem \ref{girt}, the following lemma is needed.

\begin{lem} {\rm \cite [Lemma $2.5$]{sal}}\label{ll1}
Let $R$ be a ring and $I, J \in \Bbb A (R)^{\ast}$. Suppose that $I-\hspace{-.2cm}-J$ is an
edge of $A_{I} (R) $ that is not an
edge of $\Bbb A \Bbb G (R)$.
If there is a ~$K \subseteq \mathrm{Ann}_{R}(IJ) \setminus \lbrace I,J \rbrace $
such that $KI \neq (0)$ and $KJ \neq (0)$, then
$ C : I-\hspace{-.2cm}-K-\hspace{-.2cm}-J$ is a path in
$A_{I} (R) $ that is not a path in
$\Bbb A \Bbb G (R)$, and hence $C : I-\hspace{-.2cm}-K-\hspace{-.2cm}-J-\hspace{-.2cm}-I$
is a cycle in $A_{I} (R) $ of length three
and each edge of $C$ is not an edge of $\Bbb A \Bbb G (R)$.
\end{lem}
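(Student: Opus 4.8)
The plan is to establish the triangle $I-\hspace{-.2cm}-K-\hspace{-.2cm}-J-\hspace{-.2cm}-I$ edge by edge, the edge $I-\hspace{-.2cm}-J$ being given. The single fact that drives everything is that the hypothesis $K\subseteq\mathrm{Ann}_R(IJ)$ is just the equality $KIJ=(0)$, which can be rephrased either as $J\subseteq\mathrm{Ann}_R(IK)$ or as $I\subseteq\mathrm{Ann}_R(KJ)$. Before anything else I would record two preliminary observations: since $I-\hspace{-.2cm}-J$ is not an edge of $\Bbb A\Bbb G(R)$ we have $IJ\neq(0)$; and since moreover $KI\neq(0)$ we get $K\neq(0)$, while $IJ\subseteq\mathrm{Ann}_R(K)$ gives $\mathrm{Ann}_R(K)\neq(0)$, so $K\in\Bbb A(R)^{\ast}$ is a genuine vertex, distinct from $I$ and $J$ by assumption.

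Next I would check that $I-\hspace{-.2cm}-K$ is an edge of $A_I(R)$ by contradiction: if it were not, Lemma \ref{lem}(1) would force $\mathrm{Ann}_R(IK)=\mathrm{Ann}_R(I)$ or $\mathrm{Ann}_R(IK)=\mathrm{Ann}_R(K)$. Because $J\subseteq\mathrm{Ann}_R(IK)$, the first alternative yields $J\subseteq\mathrm{Ann}_R(I)$, i.e. $IJ=(0)$, contradicting the first preliminary observation, while the second yields $J\subseteq\mathrm{Ann}_R(K)$, i.e. $KJ=(0)$, contradicting the hypothesis $KJ\neq(0)$. Hence $I-\hspace{-.2cm}-K$ is an edge of $A_I(R)$, and it is not an edge of $\Bbb A\Bbb G(R)$ because $IK\neq(0)$. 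The verification that $K-\hspace{-.2cm}-J$ is an edge of $A_I(R)$ not belonging to $\Bbb A\Bbb G(R)$ is the mirror image: using $I\subseteq\mathrm{Ann}_R(KJ)$, the two alternatives of Lemma \ref{lem}(1) give $IK=(0)$ or $IJ=(0)$, both already excluded, and $KJ\neq(0)$ handles the $\Bbb A\Bbb G(R)$ part.

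Finally I would assemble the conclusion. Since $I,K,J$ are pairwise distinct and each of the edges $I-\hspace{-.2cm}-K$, $K-\hspace{-.2cm}-J$ lies in $A_I(R)$ but not in $\Bbb A\Bbb G(R)$, the path $C:I-\hspace{-.2cm}-K-\hspace{-.2cm}-J$ is a path in $A_I(R)$ that is not a path in $\Bbb A\Bbb G(R)$; appending the given edge $J-\hspace{-.2cm}-I$, which is also in $A_I(R)$ but not in $\Bbb A\Bbb G(R)$, produces the length-three cycle $C:I-\hspace{-.2cm}-K-\hspace{-.2cm}-J-\hspace{-.2cm}-I$ in $A_I(R)$ none of whose edges is an edge of $\Bbb A\Bbb G(R)$. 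There is no serious obstacle in this argument; the only point demanding care is to keep $\mathrm{Ann}_R(IJ)$, $\mathrm{Ann}_R(IK)$ and $\mathrm{Ann}_R(KJ)$ straight and to feed the correct containment ($J\subseteq\mathrm{Ann}_R(IK)$ or $I\subseteq\mathrm{Ann}_R(KJ)$) into each application of Lemma \ref{lem}(1).
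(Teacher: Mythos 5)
Your argument is correct: you verify that $K$ is a genuine vertex distinct from $I$ and $J$, and then the two applications of Lemma \ref{lem}(1), fed with $J\subseteq\mathrm{Ann}_R(IK)$ and $I\subseteq\mathrm{Ann}_R(KJ)$ respectively, rule out both alternatives and force $I-\hspace{-.2cm}-K$ and $K-\hspace{-.2cm}-J$ to be edges of $A_I(R)$, while $IK\neq(0)$, $KJ\neq(0)$, $IJ\neq(0)$ keep all three edges out of $\Bbb A\Bbb G(R)$. The paper itself states this lemma as a citation of \cite[Lemma 2.5]{sal} and gives no proof, so there is nothing to compare against; your proof is the natural direct one and needs no changes.
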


\begin{thm}\label{girt}
Let $R$ be a ring and $A_{I} (R) \neq \Bbb A \Bbb G (R)$.
Then the following statements are equivalent:

\noindent {\rm (1)} $\mathrm{gr} (A_{I}(R)) =4$;

\noindent {\rm (2)} If $I-\hspace{-.2cm}-J$ is an edge of
$A_{I}(R)$ that is not an edge of
$\Bbb A \Bbb G (R)$ for some distinct $I, J \in \Bbb A(R) ^{\ast}$,
then there is no path in $A_{I}(R)$ of length two from $I$ to $J$;

\noindent {\rm (3)} There are some distinct $I ,J \in \Bbb A(R)^{\ast} $
such that $I-\hspace{-.2cm}-J$ is an edge of $A_{I}(R)$
that is not an edge of $\Bbb A \Bbb G (R)$ and there is no path in
$A_{I}(R)$ of length two from $I$ to $J$;

\noindent {\rm (4)}  $R \cong F\times S$,
where $F$ is a field and $S$ is a ring with a unique
non-trivial ideal.
\end{thm}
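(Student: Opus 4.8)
I will prove the cyclic chain $(1)\Rightarrow(2)\Rightarrow(3)\Rightarrow(4)\Rightarrow(1)$.

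The implications $(1)\Rightarrow(2)$ and $(2)\Rightarrow(3)$ are immediate. If $\mathrm{gr}(A_{I}(R))=4$ then $A_{I}(R)$ is triangle-free, so an edge $I-\hspace{-.2cm}-J$ together with a path $I-\hspace{-.2cm}-K-\hspace{-.2cm}-J$ would produce the triangle $I-\hspace{-.2cm}-K-\hspace{-.2cm}-J-\hspace{-.2cm}-I$; hence no such path exists, which is $(2)$. For $(2)\Rightarrow(3)$ it suffices that, since $A_{I}(R)\ne\mathbb{AG}(R)$, there is at least one edge of $A_{I}(R)$ that is not an edge of $\mathbb{AG}(R)$, and $(2)$ applied to it gives $(3)$. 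For $(4)\Rightarrow(1)$: if $R\cong F\times S$ with $F$ a field and $\mathfrak m$ the unique non-trivial ideal of $S$, then the $S$-submodules of $\mathfrak m$ are only $(0)$ and $\mathfrak m$, so $\mathfrak m$ is a simple $S$-module and $\mathfrak m^{2}=(0)$; the vertex set is $\{F\times(0),\,(0)\times S,\,(0)\times\mathfrak m,\,F\times\mathfrak m\}$, and a direct computation of the six annihilators shows $\mathbb{AG}(R)\cong P_{4}$ while $A_{I}(R)\cong C_{4}$, the single extra edge being $((0)\times S)-\hspace{-.2cm}-(F\times\mathfrak m)$. In particular $A_{I}(R)\ne\mathbb{AG}(R)$ and $\mathrm{gr}(A_{I}(R))=4$; this is the content of Example \ref{exam}.

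The heart of the theorem is $(3)\Rightarrow(4)$. Choose distinct $I,J\in\mathbb{A}(R)^{\ast}$ realising $(3)$, so $I-\hspace{-.2cm}-J$ is an edge of $A_{I}(R)$ with $IJ\ne(0)$ and $A_{I}(R)$ has no path of length two from $I$ to $J$. Pick $x\in\mathrm{Ann}_{R}(IJ)\setminus(\mathrm{Ann}_{R}(I)\cup\mathrm{Ann}_{R}(J))$, so that $xIJ=(0)$ and $xI\ne(0)\ne xJ$. The ideal $(x)$ lies in $\mathrm{Ann}_{R}(IJ)$ with $(x)I\ne(0)\ne(x)J$, so Lemma \ref{ll1} together with the absence of a length-two path forces $(x)\in\{I,J\}$; interchanging $I$ and $J$ if necessary, we may assume $(x)=I$. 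Then $I^{2}=xI\ne(0)$ and $I^{2}J=x(xJ)=x(IJ)=(0)$. Since $IJ\ne(0)$, this rules out $I^{2}=I$, and looking at the vertex $(x^{3})$ (which, when $I^{2}=J$, would be adjacent to both $I$ and $J$) rules out $I^{2}=J$. Thus $I^{2}$ is a vertex distinct from $I$ and $J$; being adjacent to $J$ it cannot be adjacent to $I$, and by Lemma \ref{lem}(1) this forces $I^{3}\ne(0)$ and $\mathrm{Ann}_{R}(I^{3})=\mathrm{Ann}_{R}(I^{2})$. Feeding further ideals (such as $I+\mathrm{Ann}_{R}(J)$, $I^{2}+\mathrm{Ann}_{R}(I)$, and the powers $I^{n}$) into Lemma \ref{ll1} and discarding each alternative by exhibiting an explicit length-two path, one gets $\mathrm{Ann}_{R}(I)\subseteq I$ and $\mathrm{Ann}_{R}(J)\subseteq I$, and then that the descending chain $I\supseteq I^{2}\supseteq I^{3}\supseteq\cdots$ stabilises at $I^{2}$. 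Hence $I^{2}$ is a principal idempotent ideal, so $I^{2}=Re$ for an idempotent $e$ (from $x^{2}=rx^{3}$ one may take $e=rx^{2}$); since $(0)\ne I^{2}\ne R$ we have $e\ne 0,1$, and therefore $R\cong Re\times R(1-e)$.

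Write $R\cong R_{1}\times R_{2}$ with $R_{1},R_{2}\ne(0)$ and translate the data obtained so far: $I^{2}=R_{1}\times(0)$ forces $I=R_{1}\times I_{2}$ with $(0)\ne I_{2}\subsetneq R_{2}$ and $I_{2}^{2}=(0)$, while $I^{2}J=(0)$ and $IJ\ne(0)$ force $J=(0)\times J_{2}$ with $I_{2}J_{2}\ne(0)$. A last appeal to the hypothesis — no vertex is adjacent to both $I$ and $J$, combined with $\mathrm{diam}(A_{I}(R))\le 2$ from Theorem \ref{theo} — pins down $J_{2}=R_{2}$, then that $I_{2}$ is the unique non-trivial ideal of $R_{2}$, and finally that $R_{1}$ is a field; that is, $R\cong F\times S$ with $F=R_{1}$ and $S=R_{2}$. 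The expected main obstacle is precisely this block of case analysis: every reduction ($(x)=I$, $I^{2}\notin\{I,J\}$, $\mathrm{Ann}_{R}(I^{3})=\mathrm{Ann}_{R}(I^{2})$, $\mathrm{Ann}_{R}(I),\mathrm{Ann}_{R}(J)\subseteq I$, stabilisation of the chain of powers, and the identification of $R_{1}$ and $R_{2}$) is carried out by producing a concrete length-two path in $A_{I}(R)$ to kill the unwanted configuration, and organising these cases — in particular excluding an indefinitely descending chain of powers of $I$ and controlling the three vertices $I^{2}$, $\mathrm{Ann}_{R}(I)$, $\mathrm{Ann}_{R}(J)$ — is where the real work lies.
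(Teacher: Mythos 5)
Your treatment of $(1)\Rightarrow(2)$, $(2)\Rightarrow(3)$ and $(4)\Rightarrow(1)$ is fine, and the opening of $(3)\Rightarrow(4)$ is sound: choosing $x\in\mathrm{Ann}_R(IJ)\setminus(\mathrm{Ann}_R(I)\cup\mathrm{Ann}_R(J))$ and applying Lemma \ref{ll1} to the vertex $(x)$ does force $(x)\in\{I,J\}$, and from $(x)=I$ one gets $I^2\notin\{(0),I,J\}$, $I^2J=(0)$ and $I^3\neq(0)$ essentially as you say. From that point on, however, what you have is a programme rather than a proof. The statements that carry all the weight --- that $\mathrm{Ann}_R(I)\subseteq I$ and $\mathrm{Ann}_R(J)\subseteq I$, and above all that the chain of powers stabilises, $I^2=I^3$, so that $I^2$ is an idempotent (principal) ideal --- are announced with the phrase ``feeding further ideals into Lemma \ref{ll1}'' but never carried out, and they do not follow from anything you have established: Lemma \ref{lem}(1) only gives the equality of annihilators $\mathrm{Ann}_R(I^3)=\mathrm{Ann}_R(I^2)$, which is far weaker than $I^3=I^2$, and you exhibit no length-two path that would exclude $I^3\subsetneq I^2$. (A smaller slip: from $x^2=rx^3$ the element $rx^2$ need not be idempotent; one should take $r^2x^2$, or argue that a finitely generated idempotent ideal is generated by an idempotent.) Likewise the whole identification of the factors after the splitting $R\cong R_1\times R_2$ --- that $J_2=R_2$, that $I_2$ is the unique non-trivial ideal of $R_2$, and that $R_1$ is a field --- is compressed into one sentence, whereas it is exactly the case analysis (each unwanted configuration killed by an explicit ideal of the form $(R_1,K)$ or $(L,J_2)$ giving a length-two path) that occupies the second half of the paper's argument.

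For comparison, the paper does not attempt to stabilise powers of a principal ideal at all. It first notes $\mathrm{Ann}_R(I)\cap\mathrm{Ann}_R(J)=(0)$, takes $a\in\mathrm{Ann}_R(I)^{*}$, $b\in\mathrm{Ann}_R(J)^{*}$ and uses $a+b\in\mathrm{Ann}_R(IJ)$ to conclude (after the Lemma \ref{ll1} reduction) that $R(a+b)=J$, proves that $\mathrm{Ann}_R(I)$ and $\mathrm{Ann}_R(J)$ are minimal ideals with $J^2=\mathrm{Ann}_R(I)$ and $IJ=\mathrm{Ann}_R(J)$, deduces $J=J^2+IJ$ and hence $(J^2)^2=J^2$ (the cross terms and $(IJ)^2$ vanish), and only then invokes Brauer's Lemma to split $R$; the factors are then pinned down by the explicit path constructions mentioned above. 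Until you supply actual proofs of your idempotence/stabilisation step and of the factor identification, the implication $(3)\Rightarrow(4)$ --- the only hard part of the theorem --- remains open in your write-up.
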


\begin{proof}{

\noindent {\rm $(1) \Rightarrow (2)$} Suppose that $I-\hspace{-.2cm}-J$ is an
edge of $A_{I}(R)$ that is not an
edge of $\Bbb A \Bbb G (R)$ for some distinct $I , J \in \Bbb A (R)^{\ast} $.
Since $\mathrm{gr} (A_{I}(R)) \neq 3$,
there is no path in $A_{I}(R)$ of length two from $I$ to $J$.

\noindent {\rm $(2) \Rightarrow (3)$} Since $A_{I} (R) \neq \Bbb A \Bbb G (R)$,
 there are distinct vertices $I , J \in \Bbb A (R)^{\ast}$ such that
$I-\hspace{-.2cm}-J$ is an edge of $A_{I} (R)$
that is not an edge of $\Bbb A \Bbb G (R)$, and hence
there is no path in $A_{I} (R)$ of
length two from $I$ to $J$ by part (2).

\noindent {\rm $(3) \Rightarrow (4)$} Suppose (3) is hold. Then $\mathrm{Ann}_{R} (I) \cap \mathrm{Ann}_{R} (J) = \lbrace 0 \rbrace$.
Since $IJ \neq (0)$ and $\mathrm{Ann}_{R} (I) \cap \mathrm{Ann}_{R} (J) = \lbrace 0 \rbrace$, it follows from Lemma \ref{ll1}
 that $\mathrm{Ann}_{R}(IJ) = \mathrm{Ann}_{R} (I) \cup \mathrm{Ann}_{R} (J) \cup \lbrace I \rbrace $
such that  $I^{2} \neq (0)$ or $\mathrm{Ann}_{R}(IJ) = \mathrm{Ann}_{R} (I) \cup \mathrm{Ann}_{R} (J) \cup \lbrace J \rbrace $ with
$J^{2} \neq (0)$ (We note that if $ \lbrace I,J \rbrace \subseteq \mathrm{Ann}_{R}(IJ)$,
then $A_{I} (R)$ find the path $ I-\hspace{-.2cm}-IJ-\hspace{-.2cm}-J $, a contradiction). Without loss of generality, we may assume that
$\mathrm{Ann}_{R}(IJ) = \mathrm{Ann}_{R} (I) \cup \mathrm{Ann}_{R} (J) \cup \lbrace J \rbrace $
and $J^{2} \neq (0)$. Let $ 0 \neq a \in \mathrm{Ann}_{R} (I)$ and
$0 \neq b \in \mathrm{Ann}_{R} (J)$. Since $\mathrm{Ann}_{R} (I) \cap \mathrm{Ann}_{R} (J) = \lbrace 0 \rbrace$,
we deduce that $a+b \in \mathrm{Ann}_{R} (IJ)$ but $a+b \not\in \mathrm{Ann}_{R} (I)$ and
$a+b \not\in \mathrm{Ann}_{R} (J)$, and hence $R(a+b) =J$.
If $K$ is a non-zero ideal properly contained in $\mathrm{Ann} (I)$,
then $K \subseteq \mathrm{Ann}_{R}(IJ)$, a contradiction. Thus $\mathrm{Ann}_{R} (I)$ is a minimal
ideal of $R$. Similarly, $\mathrm{Ann}_{R} (J)$ is a minimal ideal of $R$.
Since $\mathrm{Ann}_{R}(IJ) = \mathrm{Ann}_{R} (I) \cup \mathrm{Ann}_{R} (J) \cup \lbrace J \rbrace $, where $J^{2} \neq (0)$ and $R(a+b)= J$, we conclude that
$\vert \mathrm{Ann}_{R} (IJ) \vert =3$.
The inclusion relation $J \subseteq \mathrm{Ann}_{R} (IJ)$ implies that $IJ^{2}=(0)$ and so $J^{2} \subseteq \mathrm{Ann}_{R} (I)$. Thus $J^{2} = \mathrm{Ann}_{R} (I)$ (As  $\mathrm{Ann}_{R} (I)$ is a minimal ideal
of $R$). By a similar argument and because of the minimality of $\mathrm{Ann}_{R}(J)$, $IJ= \mathrm{Ann}_{R}(J)$. Since
$\mathrm{Ann} (I)$ and $\mathrm{Ann}(J)$ are two minimal ideals of $R$
and $R(a+b) =J$, $J=\mathrm{Ann}(I)+\mathrm{Ann}(J)$.
Hence $J=J^{2} +IJ$ and so $J^{2}=(J^{2})^{2} +(IJ)^{2}$.
Since $IJ=\mathrm{Ann}(J) \subseteq \mathrm{Ann}(IJ)$,
 $(IJ)^{2} =0$. Therefore, $J^{2}=(J^{2})^{2}$ and so by Brauer's Lemma (see \cite[10.22]{lam}),
$R \cong R_{1} \times R_{2}$, where $R_{1}$ and $R_{2}$ are two rings. To complete the proof we show that one of $R_i$s is a filed and the other one contains exactly one non-trivial ideal. Suppose that $I =(I_{1},I_{2})$ and $J =(J_{1},J_{2})$, where $I_1, J_1$ and $I_2, J_2$ are ideals of $R_1$ and $R_2$, respectively.  With no loss of generality, assume that $I_{1}J_{1}=(0)$ and  $I_{2}J_{2}$ is a (non-zero) minimal ideal of $R_{2}$, i.e., $IJ= (0,I_{2}J_{2})$. As $\mathrm{Ann}_{R}(J)= IJ$, $\mathrm{Ann}_{R} (J) = (\mathrm{Ann}_{R_{1}} (J_{1}),\mathrm{Ann}_{R_{2}}(J_{2}))=(0,I_{2}J_{2})$.
Consequently, $\mathrm{Ann}_{R_{1}}(J_{1})= (0)$ and since $I_{1}J_{1}= (0)$, we conclude that
$I_{1}=(0)$. Thus $J_{1} \neq (0)$, for if not
$ I=(0,I_{2})-\hspace{-.2cm}-(R_{1},0)-\hspace{-.2cm}-J=(0,J_{2}) $ is a path in
$\Gamma ^{\prime} _{\mathrm{Ann}} (R)$ of length two from $I$ to $J$, a contradiction.
Since $I_{2}J_{2}\neq (0)$, $J_{2} \neq (0)$, i.e., $J=(J_{1},J_{2}) \neq (0,0)$
and $J \in \Bbb A (R)^{\ast}$. Hence
$\mathrm{Ann}_{R_{2}} (J_{2}) \neq (0)$ and so $J_{2} \in \Bbb A (R_{2})^{\ast} $.
By the minimality of  $\mathrm{Ann}_{R} (I)=(R_{1}, \mathrm{Ann}_{R_{2}} (I_{2}))$, $\mathrm{Ann}_{R_{2}}(I_{2}) = (0)$.
Also, the equality $\mathrm{Ann}_{R_{2}}(J_{2}) =I_{2}J_{2}$ shows that $I_{2}J_{2}^{2} =(0)$ and hence $J_{2}^{2}= (0)$ (As $\mathrm{Ann}_{R_{2}}(I_{2})= (0)$). Since $IJ \neq (0)$, $I_{2} \neq J_{2}$. Clearly, $J_{2} \subseteq I_{2}J_{2}$ and (again) by the minimality of $I_{2}J_{2}$, we infer $J_{2}=I_{2}J_{2}$ is a minimal ideal of $R_{2}$ and thus $IJ= (0,J_{2})$.
Suppose that $K$ is a non-trivial ideal of $R_{2}$ such that $K \neq J_{2}$ and $KJ_{2} =(0)$.
Now, $(R_{1},K) \subseteq \mathrm{Ann}_{R}(IJ) \setminus \lbrace I,J \rbrace $
and $(R_{1},K)I=(0,KI_{2}) \neq (0,0)$, $(R_{1},K)J= (J_{1},0) \neq (0,0)$ imply that
$ I-\hspace{-.2cm}-(R_{1},K)-\hspace{-.2cm}-J$ is a path in
$A_{I} (R)$ of length two from
$I$ to $J$ by Lemma \ref{ll1}, a contradiction. Thus $K=J_{2}$, i.e.,
$\vert \Bbb A(R_{2})^{\ast} \vert =1$, and so by \cite[Theorem 1.4]{beh1},
$R_{2}$ has exactly one non-trivial ideal.
Since $R_{2}$ is a ring with a unique non-trivial ideal and $IJ \neq (0)$,
we deduce that $I_{2} =R_{2}$.
Next, we claim  that $Z(R_{1}) =\lbrace 0 \rbrace$. Assume to the contrary,
$R_{1}$ contains a non-trivial annihilating-ideal, say $L$.
It is obvious that $LJ_{1} \neq (0)$.
Since $(L,J_{2}) \subseteq \mathrm{Ann}_{R} (IJ) \setminus \lbrace I,J \rbrace $ and $(L,J_{2})I \neq (0,0)$,
$(L,J_{2})J \neq (0,0)$, $ I-\hspace{-.2cm}-(L,J_{2})-\hspace{-.2cm}-J$
is a path in $A_{I} (R)$ of length two from
$I$ to $J$ by Lemma \ref{ll1}, which is impossible and so the claim is proved. Finally, it is enough to show that $R_1$ has no non-trivial ideal.
Assume that $R_{1}$ has a non-trivial ideal $M \neq J_{1}$. Since $(M, J_{2}) \subseteq \mathrm{Ann}(IJ) \setminus \lbrace I, J \rbrace$
and $(M, J_{2})I \neq (0)$, $(M, J_{2})J \neq (0)$,
 one may find the path $ I-\hspace{-.2cm}-(M, J_{2})-\hspace{-.2cm}-J$, a contradiction.  Since $J_{1} \neq (0)$, we conclude that $J_{1} =R_{1}$ and so $R_{1}$ is a field, as desired (We note that
$I =(0, R_{2})$ and $J=(R_{1}, J_{2})$, where $J_{2}$ is the
unique non-trivial ideal of $R_{2}$).

\noindent {\rm $(4) \Rightarrow (1)$} It is clear.
}
\end{proof}

\begin{cor}\label{noneq}
Let $R$ be a ring such that $A_{I} (R) \neq \Bbb A \Bbb G (R)$,
and assume that $R$ is not ring-isomorphic to $F \times S$, where $F$ is
field and $S$ is a ring with a unique non-trivial ideal.
If $E$ is an edge of $A_{I} (R)$
that is not an edge of $\Bbb A \Bbb G (R)$, then $E$ is an edge of a cycle
of length three in $A_{I} (R)$.
\end{cor}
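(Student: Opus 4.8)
The plan is to obtain this corollary as an almost immediate consequence of Theorem \ref{girt}. Let $E$ be an edge of $A_I(R)$ that is not an edge of $\mathbb A\mathbb G(R)$, say $E : I-\hspace{-.2cm}-J$ for distinct $I,J\in\mathbb A(R)^{\ast}$. Since $A_I(R)\neq\mathbb A\mathbb G(R)$, Theorem \ref{girt} is applicable to $R$. By hypothesis $R$ is not ring-isomorphic to $F\times S$ with $F$ a field and $S$ a ring having a unique non-trivial ideal; that is, statement (4) of Theorem \ref{girt} fails. By the equivalence of (1)--(4), statement (3) of Theorem \ref{girt} fails as well.

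Next I would spell out what the failure of statement (3) means. It asserts that there is no pair of distinct vertices $I',J'$ of $A_I(R)$ simultaneously satisfying: $I'-\hspace{-.2cm}-J'$ is an edge of $A_I(R)$, $I'-\hspace{-.2cm}-J'$ is not an edge of $\mathbb A\mathbb G(R)$, and there is no path in $A_I(R)$ of length two from $I'$ to $J'$. Since our $E : I-\hspace{-.2cm}-J$ is an edge of $A_I(R)$ that is not an edge of $\mathbb A\mathbb G(R)$, the failure of (3) forces the remaining clause to be false for this pair, i.e.\ there must exist a vertex $K\in\mathbb A(R)^{\ast}\setminus\{I,J\}$ with $I-\hspace{-.2cm}-K-\hspace{-.2cm}-J$ a path of length two in $A_I(R)$.

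Finally I would combine this length-two path with the edge $I-\hspace{-.2cm}-J$ itself: $I-\hspace{-.2cm}-K-\hspace{-.2cm}-J-\hspace{-.2cm}-I$ is then a cycle of length three in $A_I(R)$, and it contains the edge $E$, as required. There is no genuine obstacle in this argument; the one point to handle with care is the logical form of the negation of Theorem \ref{girt}(3). Because (3) is itself an existential statement, its negation is a universal statement over all non-$\mathbb A\mathbb G(R)$ edges of $A_I(R)$, which is exactly what the corollary needs; negating statement (2) instead would only produce an existential conclusion and would not suffice, so one must invoke the failure of (3) specifically.
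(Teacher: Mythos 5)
Your proposal is correct and follows exactly the route the paper intends: the corollary is stated without proof as an immediate consequence of Theorem \ref{girt}, namely that the failure of condition (4) forces the failure of (3), whose negation is the universal statement that every edge of $A_{I}(R)$ outside $\Bbb A \Bbb G (R)$ admits a length-two path between its endpoints, yielding the triangle. Your remark about negating (3) rather than (2) is precisely the right logical point to make explicit.
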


\noindent In view of Theorems \ref{thh1} and \ref{girt}, we have the following corollary.

\begin{cor}\label{cog13}
Let $R$ be a ring such that $A_{I} (R) \neq \Bbb A \Bbb G (R)$.
Then $\mathrm{gr}(A_{I} (R)) \in  \lbrace 3, 4 \rbrace$.
\end{cor}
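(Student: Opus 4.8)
The plan is to derive this directly from the results already assembled: it is a short case analysis built on Theorems \ref{theo}, \ref{thh1}, \ref{girt} and Corollary \ref{noneq}. The guiding observation is that, by Theorem \ref{theo}, as soon as $A_I(R)$ contains a cycle its girth is at most $4$; so once I know $A_I(R)$ has a cycle, the only possibilities are $\mathrm{gr}(A_I(R)) = 3$ or $\mathrm{gr}(A_I(R)) = 4$. I would therefore split on the structure of $R$ and, in each branch, either exhibit a triangle or invoke a result that pins the girth down, which also settles the existence of a cycle.

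First I would treat the reduced case: if $R$ is reduced, then since by hypothesis $A_I(R) \neq \Bbb A\Bbb G(R)$, Theorem \ref{thh1} applies and yields $\mathrm{gr}(A_I(R)) = 3$ (indeed, a triangle none of whose edges belongs to $\Bbb A\Bbb G(R)$). Next, suppose $R$ is not reduced. If $R \cong F \times S$ with $F$ a field and $S$ a ring having a unique non-trivial ideal, then the implication $(4) \Rightarrow (1)$ of Theorem \ref{girt} gives $\mathrm{gr}(A_I(R)) = 4$. In the remaining situation $R$ is not ring-isomorphic to such a product; since $A_I(R) \neq \Bbb A\Bbb G(R)$ there is an edge $E$ of $A_I(R)$ that is not an edge of $\Bbb A\Bbb G(R)$, and Corollary \ref{noneq} then places $E$ on a cycle of length three in $A_I(R)$, so $\mathrm{gr}(A_I(R)) = 3$. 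Combining the three branches gives $\mathrm{gr}(A_I(R)) \in \{3,4\}$.

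I do not anticipate a genuine obstacle; the proof is essentially bookkeeping over the trichotomy ``reduced / $F\times S$ / neither''. The only point meriting a moment's care is that the conclusion presupposes that $A_I(R)$ actually contains a cycle (otherwise $\mathrm{gr}(A_I(R))$ would be infinite), but this is automatic: the reduced branch and the non-product non-reduced branch both produce an explicit triangle, while in the product branch Theorem \ref{girt} already asserts $\mathrm{gr}(A_I(R)) = 4$. Hence the three cases are exhaustive and the corollary follows.
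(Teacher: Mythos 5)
Your proof is correct and follows essentially the paper's intended route: the paper derives the corollary "in view of Theorems \ref{thh1} and \ref{girt}", and your case analysis (reduced via Theorem \ref{thh1}; $F\times S$ via Theorem \ref{girt}; otherwise a triangle through an extra edge, which is just Corollary \ref{noneq}, itself a restatement of the $(1)\Leftrightarrow(3)$ equivalence of Theorem \ref{girt}) is exactly that argument spelled out, including the correct observation that a cycle exists in every branch.
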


\vspace{9mm} \noindent{\bf\large 3. When $A_{I} (R)$ and $\Bbb A \Bbb G (R)$ are Identical? }\vspace{5mm}

\noindent We observed that the annihilating-ideal graph $\Bbb A \Bbb G (R)$ is a subgraph of the
 annihilatior-ideal graph $A_{I} (R)$. This section is devoted to characterize all rings whose
 annihilator-ideal graphs are identical to the annihilating-ideal graphs. Also,  annihilator-ideal graphs
and annihilating-ideal graphs associated with reduced and non-reduced rings of girths 4 and $\infty$ are completely identified.

\noindent We first study reduced rings.

\begin{lem}\label{reduced}
Let $R$ be a ring. If $I-\hspace{-.2cm}-J$ is an edge of $A_{I} (R)$,
then $I \cap \mathrm{Ann}_{R} (J) \neq (0)$ and $J \cap \mathrm{Ann}_{R} (I) \neq (0)$.
Moreover, if $R$ is a reduced ring, then the converse is also true.
\end{lem}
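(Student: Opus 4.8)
The plan is to prove the two inclusions-into-$(0)$-failures separately and then handle the reduced converse using the standard structure of reduced rings.

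First I would establish the forward direction, which holds for an arbitrary ring $R$. Suppose $I-\hspace{-.2cm}-J$ is an edge of $A_{I}(R)$, so by definition $\mathrm{Ann}_{R}(IJ) \neq \mathrm{Ann}_{R}(I) \cup \mathrm{Ann}_{R}(J)$, and by Lemma \ref{lem}(1) this means $\mathrm{Ann}_{R}(IJ) \neq \mathrm{Ann}_{R}(I)$ and $\mathrm{Ann}_{R}(IJ) \neq \mathrm{Ann}_{R}(J)$. Since $\mathrm{Ann}_{R}(I) \subseteq \mathrm{Ann}_{R}(IJ)$ always, there exists $x \in \mathrm{Ann}_{R}(IJ) \setminus \mathrm{Ann}_{R}(I)$; then $xI \neq (0)$ but $xIJ = (0)$, so $(0) \neq xI \subseteq I \cap \mathrm{Ann}_{R}(J)$. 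Symmetrically, picking $y \in \mathrm{Ann}_{R}(IJ) \setminus \mathrm{Ann}_{R}(J)$ gives $(0) \neq yJ \subseteq J \cap \mathrm{Ann}_{R}(I)$. This settles the first sentence with no reducedness hypothesis.

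For the converse, assume $R$ is reduced and that $I \cap \mathrm{Ann}_{R}(J) \neq (0)$ and $J \cap \mathrm{Ann}_{R}(I) \neq (0)$; I must show $I-\hspace{-.2cm}-J$ is an edge, i.e. (contrapositive of Lemma \ref{lem}(1)) rule out $\mathrm{Ann}_{R}(IJ) = \mathrm{Ann}_{R}(I)$ and $\mathrm{Ann}_{R}(IJ) = \mathrm{Ann}_{R}(J)$. Suppose for contradiction $\mathrm{Ann}_{R}(IJ) = \mathrm{Ann}_{R}(I)$. Pick $0 \neq a \in I \cap \mathrm{Ann}_{R}(J)$; then $aJ = (0)$, so $a \in \mathrm{Ann}_{R}(IJ) = \mathrm{Ann}_{R}(I)$, whence $aI = (0)$. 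But $a \in I$, so $a^{2} \in aI = (0)$, and since $R$ is reduced $a = 0$, a contradiction. The symmetric argument using $0 \neq b \in J \cap \mathrm{Ann}_{R}(I)$ rules out $\mathrm{Ann}_{R}(IJ) = \mathrm{Ann}_{R}(J)$: then $bI = (0)$ forces $b \in \mathrm{Ann}_{R}(IJ) = \mathrm{Ann}_{R}(J)$, so $b^{2} \in bJ = (0)$ and $b = 0$. Hence $\mathrm{Ann}_{R}(IJ)$ equals neither, and $I-\hspace{-.2cm}-J$ is an edge of $A_{I}(R)$.

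The main thing to get right is the bookkeeping of which direction needs reducedness: the forward implication is purely formal, while the converse genuinely uses that $a \in I$ together with $aI = (0)$ forces $a^{2} = 0$ — this is exactly where "reduced" is consumed, and without it the statement fails (as the non-reduced examples in the excerpt already suggest). I expect no real obstacle beyond being careful that $\mathrm{Ann}_{R}(I) \subseteq \mathrm{Ann}_{R}(IJ)$ (and likewise for $J$) so that the set differences used in the forward direction are taken in the correct order; everything else is a short direct computation.
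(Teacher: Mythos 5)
Your proof is correct and follows essentially the same route as the paper: in the forward direction a witness that annihilates $IJ$ but not $I$ (resp.\ not $J$) produces the nonzero intersections $I\cap\mathrm{Ann}_R(J)$ and $J\cap\mathrm{Ann}_R(I)$, and the converse comes down to the fact that in a reduced ring an element of $I\cap\mathrm{Ann}_R(I)$ (or $J\cap\mathrm{Ann}_R(J)$) squares to zero. The only cosmetic differences are that the paper takes one witness outside the whole union $\mathrm{Ann}_R(I)\cup\mathrm{Ann}_R(J)$ where you take two separate witnesses via Lemma \ref{lem}(1), and in the converse the paper argues through the inclusion statement of Lemma \ref{lem}(3) while you use Lemma \ref{lem}(1) directly; these are equivalent and your bookkeeping of where reducedness is used is accurate.
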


\begin{proof}
{Assume that $I-\hspace{-.2cm}-J$ is an edge of $A_{I} (R)$.
Thus $\mathrm{Ann}_{R} (IJ) \neq \mathrm{Ann} _{R} (I) \cup \mathrm{Ann}_{R} (J)$.
Suppose that $K \subseteq \mathrm{Ann}_{R} (IJ) \setminus \mathrm{Ann}_{R} (I) \cup \mathrm{Ann} _{R} (J)$.
Hence $KIJ = (0)$ whereas  $KI \neq (0)$ and $KJ \neq (0)$.
Since $KI \subseteq I$ and $KIJ = (0)$, we deduce that $I \cap \mathrm{Ann}_{R} (J) \neq (0)$. Moreover $KJ \subseteq J$ and $KIJ =(0)$ imply that
$J \cap \mathrm{Ann}_{R} (I) \neq (0)$.
Note that if $KI =J \subseteq I$, then $J^{2} = (0)$, i.e.,
we may suppose that
$R$ is a non-reduced ring. Thus $J \cap \mathrm{Ann}_{R} (J) \neq (0)$.
Since $KI =J \subseteq I$, $I \cap \mathrm{Ann}_{R} (J) \neq (0)$.
Similarly, if $KJ =I$, then $J \cap \mathrm{Ann}_{R} (I) \neq (0)$.

\noindent To prove the other side, let $R$ be a reduced ring,
$I \cap \mathrm{Ann}_{R} (J) \neq (0)$ and $J \cap \mathrm{Ann}_{R} (I) \neq (0)$.
Assume to the contrary, $I-\hspace{-.2cm}-J$ is not an
edge of $A_{I} (R)$.
Hence by part (3) of Lemma \ref{lem},  either $\mathrm{Ann}_{R}(I) \subseteq \mathrm{Ann} (J)$ or
$\mathrm{Ann}_{R} (J) \subseteq \mathrm{Ann}_{R} (I)$.
Without lost of generality, assume that $\mathrm{Ann}_{R}(I) \subseteq \mathrm{Ann} (J)$.
Therefore $\mathrm{Ann} _{R} (I) \cap J \subseteq \mathrm{Ann} _{R} (J) \cap J$
and hence $\mathrm{Ann}_{R} (J) \cap J \neq (0)$, a contradiction. Thus $I-\hspace{-.2cm}-J$
is an edge of $A_{I} (R)$.
}
\end{proof}

\noindent The following example shows that the condition of $R$ to be reduced in Lemma \ref{reduced} is necessary.

\begin{example}\label{neces}
Let $D =\Bbb Z_{2} [X,Y,W]$ and $I = (X^{2},Y^{2},XY,XW)D$ be an ideal of $D$. Let $R =D/ I $.
Let $x =X+I$, $y =Y+I$ and $w=W+I$ be elements of $R$.
Then $\mathrm{Nil}(R) = (x,y)R$ and $Z(R) = (x,y,w)R$ is an ideal of $R$.
It is not hard to check that $(x)R = \mathrm{Ann}_{R} (Z(R))$ and $ \mathrm{Nil}(R)= \mathrm{Ann}_{R} (\mathrm{Nil}(R))$.
Now suppose that $I=\mathrm{Nil} (R)$ and $J =Z(R)$. Thus
$I \cap \mathrm{Ann}_{R}(J) \neq (0)$ and $J \cap \mathrm{Ann}_{R}(I) \neq (0)$, whereas
$\mathrm{Ann}_{R} (IJ) = \mathrm{Ann}_{R}(I) \cup \mathrm{Ann}_{R} (J)$.
Therefore $I-\hspace{-.2cm}-J$ is not an edge of $A_{I} (R)$.
\end{example}

\begin{cor}\label{red1}
Let $R$ be a ring and $I,J \in \Bbb A (R)^{\ast}$. If
$\mathrm{Ann}_{R} (I) \nsubseteq \mathrm{Ann}_{R} (J)$ and
$\mathrm{Ann}_{R} (J) \nsubseteq \mathrm{Ann}_{R} (I)$, then $I-\hspace{-.2cm}-J$ is an edge of
$A_{I} (R)$. Moreover, if $R$ is a reduced ring, then the converts is also true.
\end{cor}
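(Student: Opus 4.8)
The plan is to deduce both halves of the statement directly from results already in hand. For the forward direction, the hypotheses $\mathrm{Ann}_R(I) \nsubseteq \mathrm{Ann}_R(J)$ and $\mathrm{Ann}_R(J) \nsubseteq \mathrm{Ann}_R(I)$ are exactly the hypotheses of part (4) of Lemma \ref{lem}, which gives immediately that $I-\hspace{-.2cm}-J$ is an edge of $A_I(R)$. So the first assertion requires essentially no work beyond citing Lemma \ref{lem}(4). (One should remark that this half does not use reducedness, matching the phrasing of the statement.)

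For the converse, assume $R$ is reduced and $I-\hspace{-.2cm}-J$ is an edge of $A_I(R)$; I want to show neither annihilator contains the other. By the ``moreover'' part of Lemma \ref{reduced}, the edge hypothesis in a reduced ring is equivalent to $I \cap \mathrm{Ann}_R(J) \neq (0)$ and $J \cap \mathrm{Ann}_R(I) \neq (0)$. Now suppose for contradiction that, say, $\mathrm{Ann}_R(I) \subseteq \mathrm{Ann}_R(J)$. Then $J \cap \mathrm{Ann}_R(I) \subseteq J \cap \mathrm{Ann}_R(J)$, and since $J \cap \mathrm{Ann}_R(I) \neq (0)$ we get $J \cap \mathrm{Ann}_R(J) \neq (0)$; picking $0 \neq t \in J \cap \mathrm{Ann}_R(J)$ gives $t^2 \in J\cdot\mathrm{Ann}_R(J) = (0)$, contradicting that $R$ is reduced. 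The symmetric assumption $\mathrm{Ann}_R(J) \subseteq \mathrm{Ann}_R(I)$ is handled the same way using $I \cap \mathrm{Ann}_R(I) \neq (0)$. Hence neither inclusion holds, which is the desired conclusion. (Alternatively, one could invoke part (3) of Lemma \ref{lem} in contrapositive form together with the same ``square is zero'' observation; this is in fact precisely the argument already run inside the proof of Lemma \ref{reduced}, so the cleanest write-up simply points to that.)

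I do not anticipate a genuine obstacle here: the corollary is a repackaging of Lemma \ref{lem}(4) and Lemma \ref{reduced}. The only point requiring a moment's care is making sure the ``square is zero'' step is applied to an element of $J$ lying in $\mathrm{Ann}_R(J)$ (not to an ideal equation that could fail without the element being in the intersection), and noting explicitly where reducedness is and is not used so the asymmetry in the statement is justified.
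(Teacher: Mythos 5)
Your proposal is correct and follows essentially the same route as the paper: the forward direction is just Lemma \ref{lem}(4), and the converse combines Lemma \ref{reduced} with the observation that in a reduced ring $J\cap\mathrm{Ann}_R(J)=(0)$ (any element there squares to zero), which is exactly the paper's argument. The only cosmetic difference is the order in which you invoke these facts, and in the symmetric case you mean ``deriving $I\cap\mathrm{Ann}_R(I)\neq(0)$ from $I\cap\mathrm{Ann}_R(J)\neq(0)$,'' which is how it should read.
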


\begin{proof}{
 One side is similar to part (4) of Lemma \ref{lem}.
To prove the other side, assume to the contrary,
$\mathrm{Ann} _{R} (I) \subseteq \mathrm{Ann} _{R} (J)$. Since $R$ is reduced,
$J \cap \mathrm{Ann}_{R} (J) = (0)$.
Thus $J \cap \mathrm{Ann}_{R} (I) \subseteq J \cap \mathrm{Ann}_{R} (J) =(0)$,
a contradiction (see Lemma \ref{reduced}). Thus $\mathrm{Ann}_{R} (I) \nsubseteq \mathrm{Ann}_{R} (J)$.
Similarly, $\mathrm{Ann}_{R} (J) \nsubseteq \mathrm{Ann}_{R} (I)$.
}
\end{proof}

\noindent To prove Theorems \ref{complete} and \ref{Min}, we have to recall the following results.

\begin{lem}\label{remember}
\noindent {\rm(1)} {\rm \cite [Theorem $2.7$]{beh1}} Let $R$ be a reduced ring.
Then $\Bbb A \Bbb G (R)$ is complete graph if and only if
$R \cong F_{1} \times F_{2}$, where $F_{1}$ , $F_{2}$ are fields.

\noindent {\rm(2)} {\rm \cite [Lemma $1.8$]{beh2}} Let $R$ be a reduced ring with finitely many
minimal prime ideals. If $R$ has more than two minimal primes,
then $\mathrm{diam} (\Bbb A \Bbb G(R))=3$.
\end{lem}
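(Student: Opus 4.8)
\noindent The plan is to prove the two parts separately; they are essentially independent.

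\noindent\emph{Part $(1)$.} The implication ``$\Leftarrow$'' is a direct check: if $R\cong F_{1}\times F_{2}$ with $F_{1},F_{2}$ fields, then $F_{1}\times(0)$ and $(0)\times F_{2}$ are the only non-zero proper ideals, both have non-zero annihilator, and their product is $(0)$, so $\Bbb A\Bbb G(R)\cong K_{2}$ is complete. For ``$\Rightarrow$'', assume $\Bbb A\Bbb G(R)$ is complete. Since $R$ is not a domain, I would fix $0\ne a\in Z(R)$ and set $I_{1}:=Ra$, $I_{2}:=\mathrm{Ann}_{R}(a)$; because $R$ is reduced, $a^{2}\ne0$, so $I_{1}\ne I_{2}$ and both lie in $\Bbb A(R)^{\ast}$ (so completeness is not vacuous). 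The key step is to show $\Bbb A(R)^{\ast}=\{I_{1},I_{2}\}$: any $I\in\Bbb A(R)^{\ast}$ with $I\notin\{I_{1},I_{2}\}$ is adjacent to both, so $II_{1}=(0)$ gives $I\subseteq\mathrm{Ann}_{R}(a)=I_{2}$ and $II_{2}=(0)$ gives $I\subseteq\mathrm{Ann}_{R}(I_{2})$, whence $I\subseteq I_{2}\cap\mathrm{Ann}_{R}(I_{2})=(0)$ since $R$ is reduced, a contradiction. Also $I_{1}I_{2}=(0)$ (distinct vertices), so $I_{1}\cap I_{2}=(0)$.

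\noindent Next I would establish the comaximality $I_{1}+I_{2}=R$. The ideal $Ra^{2}\subseteq I_{1}$ lies in $\Bbb A(R)^{\ast}$ (its annihilator contains $I_{2}\ne(0)$, and $a^{2}\ne0$), hence $Ra^{2}\in\{I_{1},I_{2}\}$; since $Ra^{2}\subseteq I_{1}$ and $I_{1}\cap I_{2}=(0)$, we cannot have $Ra^{2}=I_{2}$, so $Ra^{2}=Ra$. Thus $a=ra^{2}$ for some $r\in R$, i.e.\ $a(1-ra)=0$, so $1-ra\in\mathrm{Ann}_{R}(a)=I_{2}$, giving $1\in Ra+I_{2}=I_{1}+I_{2}$. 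With $I_{1}\cap I_{2}=(0)$ and $I_{1}+I_{2}=R$, the Chinese Remainder Theorem yields $R\cong R/I_{1}\times R/I_{2}$ with both factors non-zero. Finally, if $R/I_{1}$ had a non-zero proper ideal $J$, then $J\times(0)$ would be a third element of $\Bbb A(R)^{\ast}$ (its annihilator contains $(0)\times R/I_{2}$), contradicting $|\Bbb A(R)^{\ast}|=2$; hence $R/I_{1}$, and by symmetry $R/I_{2}$, is a field, which is the desired conclusion.

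\noindent\emph{Part $(2)$.} Write $\mathrm{Min}(R)=\{\mathfrak p_{1},\dots,\mathfrak p_{n}\}$ with $n\ge3$. Since $R$ is reduced, $\bigcap_{i}\mathfrak p_{i}=\mathrm{Nil}(R)=(0)$, and distinct minimal primes are incomparable. For each $i$ set $A_{i}:=\bigcap_{j\ne i}\mathfrak p_{j}$. Using finiteness of $\mathrm{Min}(R)$ I would record: $A_{i}\nsubseteq\mathfrak p_{i}$ (otherwise $\prod_{j\ne i}\mathfrak p_{j}\subseteq\mathfrak p_{i}$ forces some $\mathfrak p_{j}\subseteq\mathfrak p_{i}$), in particular $A_{i}\ne(0)$; also $A_{i}\mathfrak p_{i}\subseteq\bigcap_{k}\mathfrak p_{k}=(0)$; and, more precisely, $\mathrm{Ann}_{R}(\mathfrak p_{i})=A_{i}$, the non-trivial inclusion being: if $x\mathfrak p_{i}=(0)$ and $j\ne i$, pick $t\in\mathfrak p_{i}\setminus\mathfrak p_{j}$; then $xt=0\in\mathfrak p_{j}$ forces $x\in\mathfrak p_{j}$, so $x\in A_{i}$. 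In particular $\mathfrak p_{1},\mathfrak p_{2}\in\Bbb A(R)^{\ast}$. Now $\mathfrak p_{1}$ and $\mathfrak p_{2}$ are not adjacent in $\Bbb A\Bbb G(R)$: $\mathfrak p_{1}\mathfrak p_{2}=(0)$ would give $\mathfrak p_{2}\subseteq\mathrm{Ann}_{R}(\mathfrak p_{1})=A_{1}\subseteq\mathfrak p_{3}$, contradicting incomparability (here $n\ge3$ is used). They also have no common neighbour: any vertex $K$ adjacent to both satisfies $K\subseteq\mathrm{Ann}_{R}(\mathfrak p_{1})\cap\mathrm{Ann}_{R}(\mathfrak p_{2})=A_{1}\cap A_{2}=\bigcap_{i}\mathfrak p_{i}=(0)$, which is impossible. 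Hence $\mathrm{d}_{\Bbb A\Bbb G(R)}(\mathfrak p_{1},\mathfrak p_{2})\ge3$, and since $\mathrm{diam}(\Bbb A\Bbb G(R))\le3$ by \cite[Theorem 2.1]{beh1}, we get $\mathrm{diam}(\Bbb A\Bbb G(R))=3$.

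\noindent\emph{Where the difficulty lies.} In $(1)$ the only non-routine step is the identity $\Bbb A(R)^{\ast}=\{I_{1},I_{2}\}$ combined with the comaximality trick $a=ra^{2}$; once the ring splits, everything is bookkeeping. In $(2)$ the crux is the exact value $\mathrm{Ann}_{R}(\mathfrak p_{i})=A_{i}$ and the fact $A_{i}\ne(0)$, which is precisely where the hypothesis of finitely many minimal primes enters; the remainder of the argument is short.
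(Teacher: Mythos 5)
Both halves of your argument are correct, but note that the paper itself offers no proof of this lemma at all: it is quoted verbatim from the literature (part (1) is \cite[Theorem 2.7]{beh1}, part (2) is \cite[Lemma 1.8]{beh2}), so what you have written is a self-contained substitute for those citations rather than a variant of an argument in this paper. Your part (1) is sound: reducedness gives $I_1=Ra\neq I_2=\mathrm{Ann}_R(a)$, completeness plus $I_2\cap\mathrm{Ann}_R(I_2)=(0)$ pins down $\Bbb A(R)^{\ast}=\{I_1,I_2\}$, the step $Ra^2=Ra$ yields $a=ra^2$ and hence $I_1+I_2=R$, and CRT with $I_1\cap I_2=(0)$ splits $R$; the final count $\vert\Bbb A(R)^{\ast}\vert=2$ forces both factors to be fields. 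Your part (2) is the standard computation $\mathrm{Ann}_R(\mathfrak p_i)=\bigcap_{j\neq i}\mathfrak p_j\neq(0)$ (finiteness of $\mathrm{Min}(R)$ entering through the prime-avoidance step $A_i\nsubseteq\mathfrak p_i$), from which $\mathfrak p_1,\mathfrak p_2$ are non-adjacent vertices with no common neighbour, so $\mathrm{d}(\mathfrak p_1,\mathfrak p_2)\geq 3$ and the known bound $\mathrm{diam}(\Bbb A\Bbb G(R))\leq 3$ closes the argument; this is essentially how the cited source argues. The only points you leave tacit are harmless: you use the paper's standing convention that $R$ is not a domain (so $Z(R)^{\ast}\neq\emptyset$ in part (1), and $\mathfrak p_i\neq(0)$ in part (2), the latter being automatic since $n\geq 3$ rules out a domain). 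So the proposal is a correct and complete proof of results the paper merely cites.
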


\begin{thm}\label{complete}
 Let $R$ be a reduced ring. Then the following statements are equivalent:

\noindent {\rm(i)} $A_{I} (R)$ is a complete graph;

\noindent {\rm(ii)} $\Bbb A \Bbb G (R)$ is a complete graph $($and hence $A_{I} (R) \cong \Bbb A \Bbb G (R) )$;

\noindent {\rm(iii)} $R \cong F_{1} \times F_{2}$, where $F_{1}$, $F_{2}$ are two fields.
\end{thm}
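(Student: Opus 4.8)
The plan is to establish the cycle of implications $(iii)\Rightarrow(ii)\Rightarrow(i)\Rightarrow(iii)$, leaning on Lemma~\ref{remember}(1) for the link between $(ii)$ and $(iii)$, and on the reduced-ring machinery from Section~2 (Corollary~\ref{red1} and Lemma~\ref{reduced}) for the passage from completeness of $A_I(R)$ back to the structure of $R$. The implication $(ii)\Rightarrow(i)$ is immediate since $\Bbb A\Bbb G(R)$ is always a subgraph of $A_I(R)$ on the same vertex set, so if the former is complete the latter is too (and they coincide); and $(iii)\Rightarrow(ii)$ is exactly Lemma~\ref{remember}(1). So the only substantive work is $(i)\Rightarrow(iii)$.

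For $(i)\Rightarrow(iii)$, assume $A_I(R)$ is complete. First I would argue that $R$ has exactly two minimal prime ideals. If $R$ had only one minimal prime, then since $R$ is reduced $R$ would be a domain, contradicting the standing assumption that $R$ is not a domain (and also $\Bbb A(R)^*$ would be empty). If $R$ had more than two minimal primes, then by Lemma~\ref{remember}(2) we would have $\mathrm{diam}(\Bbb A\Bbb G(R))=3$; I would then extract two vertices $I,J$ at distance $3$ in $\Bbb A\Bbb G(R)$ and show they are non-adjacent in $A_I(R)$. The cleanest way: if $I,J$ are at distance $3$ in $\Bbb A\Bbb G(R)$ then in particular $IJ\neq 0$, and one can choose minimal primes $\mathfrak p\supseteq \mathrm{Ann}(I)$ situations so that $\mathrm{Ann}(I)$ and $\mathrm{Ann}(J)$ are comparable — indeed, the obstruction to adjacency in Lemma~\ref{lem}(3) combined with the reduced-ring converse in Corollary~\ref{red1} says $I-\hspace{-.2cm}-J$ is an edge iff the annihilators are incomparable, so if they were incomparable $I,J$ would be adjacent in $A_I(R)$ hence (completeness being the hypothesis, but we want a contradiction with distance $3$) — actually the right route is: distance $3$ forces, via the structure of reduced rings with $\geq 3$ minimal primes, the existence of a pair whose annihilators are comparable yet which is not an $\Bbb A\Bbb G$-edge, and by Corollary~\ref{red1}'s reduced converse such a pair is not an $A_I(R)$-edge, contradicting completeness. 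Thus $|\mathrm{Min}(R)|=2$.

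Once $R$ is reduced with exactly two minimal primes $\mathfrak p_1,\mathfrak p_2$, note $\mathfrak p_1\cap\mathfrak p_2=\mathrm{Nil}(R)=(0)$, so $R$ embeds in $R/\mathfrak p_1\times R/\mathfrak p_2$; the goal is to upgrade this to an isomorphism onto a product of two \emph{fields}. Here I would take a vertex, e.g. the ideal $\mathfrak p_1$ itself if it lies in $\Bbb A(R)^*$ (its annihilator contains $\mathfrak p_2\neq 0$), and use completeness of $A_I(R)$ together with Lemma~\ref{reduced} to force strong divisibility constraints: for every annihilating ideal $J$, the edge $\mathfrak p_1-\hspace{-.2cm}-J$ being present gives $\mathfrak p_1\cap\mathrm{Ann}(J)\neq(0)$ and $J\cap\mathfrak p_2\neq(0)$. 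Iterating this over all ideals contained in $\mathfrak p_1$ or $\mathfrak p_2$ should collapse the lattice of ideals so that $\mathfrak p_1$ and $\mathfrak p_2$ are themselves minimal as ideals and $R=\mathfrak p_1\oplus\mathfrak p_2$ with each summand a field; concretely, any proper nonzero ideal strictly between $(0)$ and $\mathfrak p_i$ would produce, via Corollary~\ref{red1}, a non-edge in $A_I(R)$. I would also invoke the idempotent-splitting (Brauer's Lemma, as used in the proof of Theorem~\ref{girt}) to realize $R\cong R_1\times R_2$ and then check each $R_i$ has no nonzero zero-divisors, i.e. is a field, again because a nontrivial annihilating ideal in $R_i$ would yield two vertices with comparable annihilators, hence a missing edge.

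\textbf{Main obstacle.} I expect the delicate point to be the step ruling out $|\mathrm{Min}(R)|\geq 3$: translating ``$\mathrm{diam}(\Bbb A\Bbb G(R))=3$'' into a concrete pair $I,J$ that is provably \emph{non}-adjacent in the larger graph $A_I(R)$. Distance $3$ in $\Bbb A\Bbb G(R)$ does not by itself give non-adjacency in $A_I(R)$ — in fact Lemma~\ref{lem}(5) says the opposite! So the argument cannot go through a distance-$3$ pair directly; instead I must produce a pair $I,J$ with $IJ\neq 0$ and \emph{comparable} annihilators (which, by Corollary~\ref{red1} in the reduced case, is equivalent to non-adjacency), and the existence of such a pair when there are $\geq 3$ minimal primes is what needs care. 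The natural candidates are ideals of the form $\bigcap_{i\in S}\mathfrak p_i$ for suitable index sets $S$; choosing $S$ so that one annihilator sits inside another while the product stays nonzero is the crux, and this is where I would spend the real effort.
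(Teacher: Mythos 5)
Your reduction of everything to $(i)\Rightarrow(iii)$ is fine in outline, but the proof as written has a genuine hole exactly where you say it does: the step ruling out $\vert\mathrm{Min}(R)\vert\geq 3$ is never carried out. You correctly observe that a distance-$3$ pair in $\Bbb A\Bbb G(R)$ is useless (by Lemma \ref{lem}(5) such a pair \emph{is} adjacent in $A_I(R)$), and you then leave the actual construction of a pair $I,J$ with $IJ\neq(0)$ and comparable annihilators as "where I would spend the real effort" --- so the crux is declared, not proved. There is also a smaller defect in the same step: Lemma \ref{remember}(2) is only stated for reduced rings with \emph{finitely many} minimal primes, so even the appeal to $\mathrm{diam}(\Bbb A\Bbb G(R))=3$ is not justified without first excluding $\vert\mathrm{Min}(R)\vert=\infty$. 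The later part of your sketch (minimality of the $\mathfrak p_i$, Brauer's Lemma, each factor a field) can be made rigorous, but it is only a sketch and it rests on the unproved first step.

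The paper's proof shows that the whole detour through $\mathrm{Min}(R)$ is unnecessary, and the key observation is one you already use in passing: by Corollary \ref{red1} (reduced case), two vertices are adjacent in $A_I(R)$ if and only if their annihilators are incomparable, so \emph{any} nested pair $(0)\neq J\subsetneqq I$ of annihilating ideals (note $J$ is automatically a vertex, since $\mathrm{Ann}_R(I)\subseteq\mathrm{Ann}_R(J)$) is a non-edge. Hence completeness of $A_I(R)$ forces every vertex to be a minimal ideal of $R$; distinct minimal ideals satisfy $IJ\subseteq I\cap J=(0)$, so $\Bbb A\Bbb G(R)$ is already complete, and Lemma \ref{remember}(1) then gives $R\cong F_1\times F_2$ at once. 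That is, the paper proves $(i)\Rightarrow(ii)\Rightarrow(iii)$ with no counting of minimal primes, no diameter argument, and no Brauer splitting; applying your own nested-ideal remark to an arbitrary pair of comparable vertices, rather than only to ideals inside $\mathfrak p_1,\mathfrak p_2$, closes your gap and shortens the argument to a few lines.
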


\begin{proof}{ $(i) \Rightarrow (ii)$ Suppose that
$A_{I} (R)$ is a complete graph.
It is enough to show that all elements of $\Bbb A (R)^{\ast}$
are minimal ideals of $R$.
Assume to the contrary, $I \in \Bbb A(R)^{\ast}$ and $J \subsetneqq I$. Thus
$\mathrm{Ann} _{R} (J) \neq (0)$ and clearly $J$ is vertex of $A_{I} (R)$.
By Corollary \ref{red1}, $I-\hspace{-.2cm}-J$ is not an edge of $A_{I} (R)$ and this contradicts the assumption.

\noindent {\rm $(ii) \Rightarrow (iii)$} It follows from part (1) of Lemma \ref{remember}.

\noindent {\rm $(iii) \Rightarrow (i)$} It is clear.
}
\end{proof}

\begin{thm}\label{Min}
Let $R$ be a reduced ring with $3 \leq \vert \mathrm{Min} (R) \vert < \infty $.
Then $A_{I} (R) \neq \Bbb A \Bbb G (R)$ and
$\mathrm{gr} (A_{I} (R))=3 $.
\end{thm}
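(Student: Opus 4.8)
The plan is to establish the two assertions by leveraging the known structural dichotomy between $\Bbb A \Bbb G(R)$ and $A_I(R)$ together with the fact that many minimal primes force large diameter in $\Bbb A \Bbb G(R)$. First I would invoke part (2) of Lemma \ref{remember}: since $R$ is reduced with $3 \leq |\mathrm{Min}(R)| < \infty$, we get $\mathrm{diam}(\Bbb A \Bbb G(R)) = 3$. In particular there exist vertices $I, J \in \Bbb A(R)^{\ast}$ with $\mathrm{d}_{\Bbb A \Bbb G(R)}(I,J) = 3$, so $IJ \neq (0)$ and thus $I-\hspace{-.2cm}-J$ is not an edge of $\Bbb A \Bbb G(R)$. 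By part (5) of Lemma \ref{lem}, $I-\hspace{-.2cm}-J$ \emph{is} an edge of $A_I(R)$. Hence $A_I(R)$ has an edge that $\Bbb A \Bbb G(R)$ lacks, which gives $A_I(R) \neq \Bbb A \Bbb G(R)$ immediately.

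Next, to pin down the girth, I would apply Theorem \ref{thh1}: for a reduced ring with $A_I(R) \neq \Bbb A \Bbb G(R)$, we automatically have $\mathrm{gr}(A_I(R)) = 3$ (indeed there is a triangle none of whose edges lies in $\Bbb A \Bbb G(R)$). So once the inequality $A_I(R) \neq \Bbb A \Bbb G(R)$ is in hand, the girth claim is a direct citation. This is the cleanest route; alternatively one could argue via Theorem \ref{girt}, noting that $R$ with $\geq 3$ minimal primes cannot be of the form $F \times S$ with $S$ having a unique non-trivial ideal (such a product has exactly two minimal primes when $S$ is, say, local Artinian — and in any case $S$ with a unique non-trivial ideal has a unique minimal prime, giving $R$ exactly two), so condition (4) of Theorem \ref{girt} fails, forcing $\mathrm{gr}(A_I(R)) \neq 4$; combined with Corollary \ref{cog13} this again yields $\mathrm{gr}(A_I(R)) = 3$. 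I would present the first route as the main argument since it is shorter, and perhaps remark on the second.

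The only genuine subtlety is confirming that $\Bbb A(R)^{\ast}$ is nonempty and that Lemma \ref{remember}(2) applies as stated — i.e. that ``more than two minimal primes'' is exactly our hypothesis $|\mathrm{Min}(R)| \geq 3$ — and that the diameter-$3$ conclusion really produces a pair $I,J$ at distance exactly $3$ (not merely $\leq 3$), which is the definition of the diameter being attained. Since $R$ is not a domain and has finitely many minimal primes, $\Bbb A(R)^{\ast}$ is certainly nonempty (each minimal prime is an annihilating ideal in a reduced ring with $\mathrm{Min}(R)$ finite), so there is no degenerate case to worry about. I expect no real obstacle here; the work is entirely in correctly stringing together Lemma \ref{remember}(2), Lemma \ref{lem}(5), and Theorem \ref{thh1}.
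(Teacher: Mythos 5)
Your proposal is correct and follows essentially the paper's own route: invoke Lemma \ref{remember}(2) to get $\mathrm{diam}(\Bbb A \Bbb G(R))=3$, deduce $A_I(R)\neq \Bbb A \Bbb G(R)$, and then cite Theorem \ref{thh1} for $\mathrm{gr}(A_I(R))=3$. The only (harmless) difference is the middle step: the paper concludes $A_I(R)\neq \Bbb A \Bbb G(R)$ from the bound $\mathrm{diam}(A_I(R))\leq 2$ in Theorem \ref{theo}, whereas you extract a pair at distance $3$ and apply Lemma \ref{lem}(5); both are valid.
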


\begin{proof}
{By part (2) of Lemma \ref{remember},  $\mathrm{diam} (\Bbb A \Bbb G(R))=3$ and
hence $A_{I} (R) \neq \Bbb A \Bbb G (R)$ by Theorem 2.
Since $R$ is a reduced ring and $A_{I} (R) \neq \Bbb A \Bbb G (R)$,  Theorem \ref{thh1} follows that
$\mathrm{gr} (A_{I} (R))=3 $.
}
\end{proof}

\begin{thm} {\rm \cite [Theorem $3.4$]{sal}}\label{indentical}
Let $R$ be a reduced ring.
Then $A_{I} (R) = \Bbb A \Bbb G (R)$
if and only if $ \vert \mathrm{Min} (R) \vert = 2 $.
\end{thm}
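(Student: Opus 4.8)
The plan is to prove the equivalence in two directions, exploiting the reduced hypothesis and the structure theory for reduced rings with finitely many minimal primes.

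\textbf{The easy direction.} First I would show that $\vert \mathrm{Min}(R)\vert = 2$ forces $A_I(R) = \mathbb{A}\mathbb{G}(R)$. Since $\mathbb{A}\mathbb{G}(R)$ is always a subgraph of $A_I(R)$, I only need that every edge of $A_I(R)$ is an edge of $\mathbb{A}\mathbb{G}(R)$. Suppose $I - \hspace{-.2cm}- J$ is an edge of $A_I(R)$ for distinct $I,J \in \mathbb{A}(R)^\ast$; I want $IJ = (0)$. Write $\mathrm{Min}(R) = \{P_1, P_2\}$. Because $R$ is reduced, $\mathrm{Nil}(R) = P_1 \cap P_2 = (0)$, and for a reduced ring with exactly two minimal primes $Z(R) = P_1 \cup P_2$ with $P_1 P_2 = (0)$ (as $P_1P_2 \subseteq P_1 \cap P_2 = (0)$). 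Any annihilating ideal is contained in $Z(R)$, but an ideal contained in $P_1 \cup P_2$ and not in either $P_i$ cannot be an ideal unless... here I would instead argue via annihilators: for a reduced ring, $\mathrm{Ann}_R(K) = \bigcap\{P \in \mathrm{Min}(R) : K \nsubseteq P\}$-type descriptions, so $\mathrm{Ann}_R(I)$ is one of $(0), P_1, P_2$ (and similarly for $J$), and $\mathrm{Ann}_R(I)$ cannot be $(0)$ since $I$ is an annihilating ideal. By Corollary \ref{red1}, an edge of $A_I(R)$ in a reduced ring forces $\mathrm{Ann}_R(I) \nsubseteq \mathrm{Ann}_R(J)$ and $\mathrm{Ann}_R(J) \nsubseteq \mathrm{Ann}_R(I)$; with only the two options $P_1, P_2$ available, this means $\{\mathrm{Ann}_R(I), \mathrm{Ann}_R(J)\} = \{P_1, P_2\}$, whence $IJ \subseteq \mathrm{Ann}_R(\mathrm{Ann}_R(I))\cdot\mathrm{Ann}_R(\mathrm{Ann}_R(J))$ and a short computation with $P_1P_2=(0)$ gives $IJ=(0)$.

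\textbf{The hard direction.} Conversely, assume $A_I(R) = \mathbb{A}\mathbb{G}(R)$; I must show $\vert \mathrm{Min}(R)\vert = 2$. Since $R$ is not a domain, $\vert \mathrm{Min}(R)\vert \geq 2$. If $\vert\mathrm{Min}(R)\vert \geq 3$: when $\mathrm{Min}(R)$ is finite with at least three elements, Theorem \ref{Min} gives directly $A_I(R) \neq \mathbb{A}\mathbb{G}(R)$, a contradiction. So the remaining case is $\vert\mathrm{Min}(R)\vert$ infinite, and this is the step I expect to be the main obstacle. Here I would produce an explicit edge of $A_I(R)$ that is not an edge of $\mathbb{A}\mathbb{G}(R)$: pick three distinct minimal primes $P_1, P_2, P_3$ and choose annihilating ideals $I, J$ with $\mathrm{Ann}_R(I)$ and $\mathrm{Ann}_R(J)$ "large but different" — e.g. arrange $\mathrm{Ann}_R(I) \nsubseteq \mathrm{Ann}_R(J)$ and $\mathrm{Ann}_R(J) \nsubseteq \mathrm{Ann}_R(I)$ (possible by choosing $I,J$ to be finite intersections of minimal primes over complementary-but-overlapping index sets, using that $R$ reduced has $\bigcap \mathrm{Min}(R) = (0)$ so such ideals have nonzero annihilator) while $IJ \neq (0)$. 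Then Corollary \ref{red1} makes $I-\hspace{-.2cm}-J$ an edge of $A_I(R)$, but $IJ \neq (0)$ means it is not an edge of $\mathbb{A}\mathbb{G}(R)$, contradiction. Alternatively, and perhaps more cleanly, I would invoke Theorem \ref{thh1}: if $A_I(R) = \mathbb{A}\mathbb{G}(R)$ then in particular there is no triangle all of whose edges are non-edges of $\mathbb{A}\mathbb{G}(R)$, so by Theorem \ref{thh1} (contrapositive) we cannot have $A_I(R) \neq \mathbb{A}\mathbb{G}(R)$ together with... — actually the quickest route is: the identity $A_I(R) = \mathbb{A}\mathbb{G}(R)$ together with $\mathrm{diam}(A_I(R)) \leq 2$ (Theorem \ref{theo}) forces $\mathrm{diam}(\mathbb{A}\mathbb{G}(R)) \leq 2$, and then part (2) of Lemma \ref{remember} rules out $3 \leq \vert\mathrm{Min}(R)\vert < \infty$, while a direct localization/idempotent argument handles the infinite case by reducing to a quotient or subring with three minimal primes where diameter $3$ reappears.

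\textbf{Summary of the route I will take.} The cleanest writeup: $(\Leftarrow)$ via Corollary \ref{red1} plus the two-minimal-prime structure ($P_1P_2 = (0)$, $\mathrm{Ann}_R(I) \in \{P_1,P_2\}$); $(\Rightarrow)$ by contraposition — if $\vert\mathrm{Min}(R)\vert \neq 2$ then $\vert\mathrm{Min}(R)\vert \geq 3$, handle the finite subcase by Theorem \ref{Min} and the infinite subcase by passing to three minimal primes and exhibiting (via Corollary \ref{red1}) an $A_I(R)$-edge $I-\hspace{-.2cm}-J$ with $IJ \neq (0)$. The main obstacle is the infinite-minimal-primes subcase: ensuring the chosen ideals $I,J$ genuinely have nonzero annihilators (so that they are vertices) while simultaneously $IJ \neq (0)$ and the annihilators are incomparable; this requires care in the choice, using $\bigcap\mathrm{Min}(R) = (0)$ in a reduced ring and the fact that for a finite set $S \subseteq \mathrm{Min}(R)$ the ideal $\bigcap_{P \in S} P$ has nonzero annihilator iff $S \neq \mathrm{Min}(R)$, which is where I would spend the most effort.
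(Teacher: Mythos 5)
A preliminary remark: the paper does not prove this statement itself (it is quoted from \cite[Theorem 3.4]{sal}), so your argument can only be judged on its own merits. The $(\Leftarrow)$ direction as you sketch it is correct: for reduced $R$ with $\mathrm{Min}(R)=\{P_1,P_2\}$ one has $P_1P_2\subseteq P_1\cap P_2=(0)$, every vertex $I$ has $\mathrm{Ann}_R(I)\in\{P_1,P_2\}$ (the option $\mathrm{Ann}_R(I)=P_1\cap P_2=(0)$ being excluded because $I$ is an annihilating ideal), Corollary \ref{red1} forces adjacent vertices to have the two different primes as annihilators, and then $IJ\subseteq P_1P_2=(0)$. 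Your use of Theorem \ref{Min} for the subcase $3\le\vert\mathrm{Min}(R)\vert<\infty$ is also legitimate and non-circular, since that theorem is proved from Lemma \ref{remember}(2), Theorem \ref{theo} and Theorem \ref{thh1}.

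The genuine gap is exactly where you feared: the infinite case of $(\Rightarrow)$. The fact you rely on, that for a finite proper subset $S\subsetneqq\mathrm{Min}(R)$ the ideal $\bigcap_{P\in S}P$ has nonzero annihilator, is false. For example, let $R$ be the ring of eventually constant sequences in $\prod_{\Bbb N}\Bbb F_2$ and let $P$ be the ideal of eventually zero sequences; then $P$ is a minimal prime, $\mathrm{Min}(R)$ is infinite, $S=\{P\}$ is finite and proper, yet $\mathrm{Ann}_R(P)=(0)$. So your candidate ideals need not be vertices at all, and controlling their annihilators to get incomparability is not routine. Your fallbacks do not close this either: Theorem \ref{thh1} points the wrong way (its hypothesis is $A_I(R)\neq\Bbb A\Bbb G(R)$), Lemma \ref{remember}(2) is stated only for finitely many minimal primes, and ``reduce to a quotient or subring with three minimal primes'' is not an argument, since neither $A_I$ nor $\Bbb A\Bbb G$ transfers along quotients or subrings in any useful way. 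The standard repair works with elements and treats all of $\vert\mathrm{Min}(R)\vert\ge 3$ uniformly: pick distinct minimal primes $P_1,P_2,P_3$; by prime avoidance (minimal primes are incomparable) choose $a\in P_1\setminus(P_2\cup P_3)$ and $b\in P_2\setminus(P_1\cup P_3)$. Then $ab\in(P_1\cap P_2)\setminus P_3$, so $ab\neq 0$ and $Ra\neq Rb$. Since $R$ is reduced and $P_1$ is minimal, there is $s\notin P_1$ with $sa=0$, so $\mathrm{Ann}_R(a)\neq(0)$ and $\mathrm{Ann}_R(a)\nsubseteq P_1$, while $b\notin P_1$ gives $\mathrm{Ann}_R(b)\subseteq P_1$; symmetrically $\mathrm{Ann}_R(b)\nsubseteq P_2\supseteq\mathrm{Ann}_R(a)$. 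Hence $Ra,Rb\in\Bbb A(R)^{\ast}$ have incomparable annihilators, so by Lemma \ref{lem}(4) they are adjacent in $A_I(R)$, but $(Ra)(Rb)\neq(0)$, so this edge is not in $\Bbb A\Bbb G(R)$. With that replacement for your infinite-case construction, your outline becomes a complete proof.
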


\begin{thm}\label{star}
Let $R$ be a reduced ring.
Then the following statements are equivalent:

\noindent {\rm(1)} $\mathrm{gr}(A_{I} (R))=4$;

\noindent {\rm(2)} $A_{I} (R) = \Bbb A \Bbb G (R)$
and $\mathrm{gr}(\Bbb A \Bbb G (R))=4$;

\noindent {\rm(3)} $\mathrm{gr}(\Bbb A \Bbb G (R))=4$;

\noindent {\rm(4)} $ \vert \mathrm{Min} (R) \vert = 2 $;

\noindent {\rm(5)} $\Bbb A \Bbb G (R) \cong K_{m,n} $ with $m,n \geq 2$;

\noindent {\rm(6)} $A_{I} (R) \cong K_{m,n} $ with $m,n \geq 2$.

\end{thm}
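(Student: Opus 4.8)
The plan is to establish the cycle of implications $(4)\Rightarrow(5)\Rightarrow(6)\Rightarrow(1)$ together with the already-available equivalences, so that all six statements collapse. First I would invoke Theorem~\ref{star}'s companion results: by Theorem~\ref{indentical}, $\vert\mathrm{Min}(R)\vert=2$ is equivalent to $A_{I}(R)=\Bbb A\Bbb G(R)$, which immediately links (4) with the ``$A_{I}(R)=\Bbb A\Bbb G(R)$'' clause in (2). So the real content is to show that, for a reduced ring with exactly two minimal primes, the common graph $A_{I}(R)=\Bbb A\Bbb G(R)$ is a complete bipartite graph $K_{m,n}$ with both parts of size at least $2$, and conversely that girth $4$ forces $\vert\mathrm{Min}(R)\vert=2$.

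For the forward direction $(4)\Rightarrow(5)$: write $\mathrm{Min}(R)=\{\mathfrak{p},\mathfrak{q}\}$. In a reduced ring $\mathrm{Nil}(R)=(0)=\mathfrak{p}\cap\mathfrak{q}$, and $Z(R)=\mathfrak{p}\cup\mathfrak{q}$. I would partition the vertex set $\Bbb A(R)^{\ast}$: an annihilating ideal $I$ satisfies $\mathrm{Ann}_R(I)\neq(0)$, and in the reduced case with two minimal primes one shows $I\subseteq\mathfrak{p}$ (so $\mathrm{Ann}_R(I)\supseteq$ something in $\mathfrak{q}$) or $I\subseteq\mathfrak{q}$, but not a non-trivial ideal straddling both — here the standard fact is that every ideal with non-zero annihilator is contained in a minimal prime. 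Then $IJ=(0)$ iff $I,J$ lie in different minimal primes, giving the complete bipartite structure, with parts $\mathcal{A}=\{I:I\subseteq\mathfrak{p}\}$ and $\mathcal{B}=\{I:I\subseteq\mathfrak{q}\}$. That both parts have size $\geq 2$ follows because $\mathfrak{p}$ properly contains a smaller non-zero annihilating ideal (otherwise $\mathfrak{p}$ is a minimal ideal and one falls into the $F_1\times F_2$ case where the girth is $\infty$, not $4$) — so $\mathfrak{p}$ itself together with a proper non-zero annihilating subideal gives two vertices in $\mathcal{A}$, similarly for $\mathcal{B}$.

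The implication $(5)\Rightarrow(6)$ is immediate from Theorem~\ref{indentical}: $\Bbb A\Bbb G(R)\cong K_{m,n}$ with $m,n\geq 2$ means $\mathrm{gr}(\Bbb A\Bbb G(R))=4$, hence $\Bbb A\Bbb G(R)$ is not a star and (by Theorem~\ref{complete}, since a complete $A_I$ forces $R\cong F_1\times F_2$ hence a star $\Bbb A\Bbb G$) $R\not\cong F_1\times F_2$; more directly, $m,n\geq 2$ rules out $\vert\mathrm{Min}(R)\vert$ being anything that gives $A_I\neq\Bbb A\Bbb G$, and one checks via $\vert\mathrm{Min}(R)\vert=2$ again that $A_I(R)=\Bbb A\Bbb G(R)\cong K_{m,n}$. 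Then $(6)\Rightarrow(1)$: $K_{m,n}$ with $m,n\geq 2$ is bipartite and triangle-free but contains a $4$-cycle, so $\mathrm{gr}(A_I(R))=4$. Finally $(1)\Rightarrow(4)$ is the crux and the main obstacle: assuming $\mathrm{gr}(A_I(R))=4$, I must rule out $\vert\mathrm{Min}(R)\vert=1$ and $\vert\mathrm{Min}(R)\vert\geq 3$. If $\vert\mathrm{Min}(R)\vert=1$ then $Z(R)$ is the unique minimal prime and, $R$ being reduced, $Z(R)=\mathrm{Nil}(R)=(0)$, so $R$ is a domain — excluded by the standing hypothesis. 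If $3\leq\vert\mathrm{Min}(R)\vert$ and finite, Theorem~\ref{Min} gives $\mathrm{gr}(A_I(R))=3$, contradicting (1); the genuinely delicate point is the case $\vert\mathrm{Min}(R)\vert=\infty$, where I would argue that infinitely many minimal primes still produce a triangle in $A_I(R)$ (pick three pairwise-incomparable annihilating ideals sitting in three distinct minimal primes and apply Corollary~\ref{red1} to each pair, or note that $\mathrm{diam}(\Bbb A\Bbb G(R))=3$ forces, via Lemma~\ref{lem}(5) and Theorem~\ref{thh1}, a triangle in $A_I(R)$ all of whose edges miss $\Bbb A\Bbb G(R)$). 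Closing the $(1)\Rightarrow(4)$ loop then forces $\vert\mathrm{Min}(R)\vert=2$, and combining with Theorems~\ref{indentical} and the bipartite analysis above yields all remaining equivalences, in particular $(4)\Leftrightarrow(2)\Leftrightarrow(3)$.
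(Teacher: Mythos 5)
Your plan is organized differently from the paper's (the paper runs the single cycle $(1)\Rightarrow(2)\Rightarrow(3)\Rightarrow(4)\Rightarrow(5)\Rightarrow(6)\Rightarrow(1)$, using Theorem \ref{thh1} for $(1)\Rightarrow(2)$, \cite[Theorem 7]{A2} for $(3)\Rightarrow(4)$, and \cite[Corollary 24]{A1} together with Theorem \ref{indentical} for $(4)\Rightarrow(5)\Rightarrow(6)$), and it contains a genuine logical gap. In your step $(4)\Rightarrow(5)$ the only justification that both parts of the bipartition have size at least $2$ is ``otherwise one falls into the $F_1\times F_2$ case where the girth is $\infty$, not $4$.'' But girth $4$ is statement (1)/(3), not statement (4), so it is not available there; what you actually prove is $(1)\Rightarrow(5)$. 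This is not cosmetic: from $\vert\mathrm{Min}(R)\vert=2$ alone one cannot extract $m,n\geq 2$ (for $R\cong F_1\times F_2$ one has $\Bbb A\Bbb G(R)\cong K_{1,1}$, and for $R\cong F\times D$ with $D$ a domain that is not a field one gets a star; Theorem \ref{st123} is exactly where these possibilities under $\vert\mathrm{Min}(R)\vert=2$ live). With the substitution $(1)\Rightarrow(5)$ your chain only yields $(1)\Leftrightarrow(5)\Leftrightarrow(6)$ together with $(1)\Rightarrow(4)$, and Theorem \ref{indentical} ties (4) only to the equality $A_I(R)=\Bbb A\Bbb G(R)$, not to the girth clause of (2); so (2), (3), (4) are never shown to imply the remaining statements and the six-way equivalence does not close. (This is precisely the delicate point the paper disposes of by citing \cite[Corollary 24]{A1} at the step $(4)\Rightarrow(5)$, so your difficulty is hitting a real pressure point, but your patch changes the hypotheses rather than proving the implication.)

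A second gap is in your $(1)\Rightarrow(4)$ when $\mathrm{Min}(R)$ is infinite. Theorem \ref{Min} and part (2) of Lemma \ref{remember} both carry the hypothesis $\vert\mathrm{Min}(R)\vert<\infty$, and your two proposed work-arounds are only sketches: producing three pairwise incomparable annihilators in three distinct minimal primes and applying Corollary \ref{red1} needs an actual construction, and the alternative route assumes $\mathrm{diam}(\Bbb A\Bbb G(R))=3$, which the cited lemma only gives for finitely many minimal primes. The paper avoids the issue entirely by quoting \cite[Theorem 7]{A2}: for a reduced ring with more than two minimal primes $\mathrm{gr}(\Bbb A\Bbb G(R))=3$, with no finiteness assumption, and since $\Bbb A\Bbb G(R)$ is a subgraph of $A_I(R)$ (Lemma \ref{lem}(2)) this already forces $\mathrm{gr}(A_I(R))=3$, contradicting (1) or (3). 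Replacing your appeal to Theorem \ref{Min} by \cite[Theorem 7]{A2} would repair this second gap; the first one requires restructuring so that the $m,n\geq2$ conclusion is derived only from statements actually assumed at that point in the cycle.
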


\begin{proof}
{ $ (1) \Rightarrow (2)$ Since $\mathrm{gr}(A_{I} (R))=4$,
$A_{I} (R) = \Bbb A \Bbb G (R)$ by Theorem \ref{thh1}.

{\rm $(2) \Rightarrow (3)$} It is clear.

 {\rm $(3) \Rightarrow (4)$} If $ \vert \mathrm{Min} (R) \vert \geq 3 $,
then $\mathrm{gr}(\Bbb A \Bbb G (R))=3$ by \cite [Theorem 7]{A2}, a contradiction. Thus
$ \vert \mathrm{Min} (R) \vert = 2 $.

 {\rm $(4) \Rightarrow (5)$} It is obvious by \cite [Corollary 24]{A1}.

 {\rm $(5) \Rightarrow (6)$} It follows from hypothesis and \cite [Corollary 24]{A1} that $ \vert \mathrm{Min} (R) \vert = 2 $.  By Theorem \ref{indentical},
$A_{I} (R) = \Bbb A \Bbb G (R)$.

 {\rm $(6) \Rightarrow (1)$} It is clear.
}
\end{proof}

\noindent The following example provides a reduced ring with $ \vert \mathrm{Min} (R) \vert = 2 $,
$A_{I} (R) = \Bbb A \Bbb G (R)$ and
$\mathrm{gr}(A_{I} (R)) = \mathrm{gr}(\Bbb A \Bbb G (R))=4$.

\begin{example}\label{poly}
Let $R =K[X,Y]/ (XY)$ where $K$ is an arbitrary field. It is not hard to see that $\Bbb A \Bbb G (R)$
is a complete bipartite graph with two infinite parts
$V_{1} = \lbrace (\overline{f}) \vert~f \in K[X] , f(0)=0 \rbrace $
and $V_{2} = \lbrace (\overline{g}) \vert~g \in K[Y] , g(0)=0 \rbrace $.
Since $R$ is a reduced ring with $ \vert \mathrm{Min} (R) \vert = 2 $,
 by Theorem \ref{star}, $A_{I} (R)$ is a
complete bipartite graph. Thus $A_{I} (R) = \Bbb A \Bbb G (R)$
and $\mathrm{gr}(A_{I} (R)) = \mathrm{gr}(\Bbb A \Bbb G (R))=4$.
\end{example}

\begin{thm}\label{st123}
Let $R$ be a reduced ring.
Then the following statements are equivalent:

\noindent {\rm(1)} $\mathrm{gr}(A_{I} (R)) = \infty$;

\noindent {\rm(2)} $A_{I} (R) = \Bbb A \Bbb G (R)$
and $\mathrm{gr}(\Bbb A \Bbb G (R))= \infty $;

\noindent {\rm(3)} $\mathrm{gr}(\Bbb A \Bbb G (R))= \infty $;

\noindent {\rm(4)} $ \vert \mathrm{Min} (R) \vert = 2 $ and
$\Bbb A \Bbb G (R) \cong K_{1,n}$ for some $n \geq 1$;

\noindent {\rm(5)} $R \cong F \times D $, where $F$ is a field and $D$ is an integral domain;

\noindent {\rm(6)} $A_{I} (R) \cong K_{1,n} $ for some $n \geq 1$.

\end{thm}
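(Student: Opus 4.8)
The plan is to prove the six statements equivalent by the cyclic chain $(1)\Rightarrow(2)\Rightarrow(3)\Rightarrow(4)\Rightarrow(5)\Rightarrow(6)\Rightarrow(1)$, in the same spirit as the proof of Theorem \ref{star}. Three of the links are essentially free. For $(1)\Rightarrow(2)$: if $\mathrm{gr}(A_{I}(R))=\infty$ then in particular $\mathrm{gr}(A_{I}(R))\neq 3$, so the contrapositive of Theorem \ref{thh1} forces $A_{I}(R)=\Bbb A\Bbb G(R)$, and hence $\mathrm{gr}(\Bbb A\Bbb G(R))=\mathrm{gr}(A_{I}(R))=\infty$. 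The implication $(2)\Rightarrow(3)$ is trivial, and $(6)\Rightarrow(1)$ holds because a star $K_{1,n}$ is acyclic.

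For $(3)\Rightarrow(4)$: from $\mathrm{gr}(\Bbb A\Bbb G(R))=\infty$ and \cite[Theorem 7]{A2} (which gives $\mathrm{gr}(\Bbb A\Bbb G(R))=3$ whenever $\vert\mathrm{Min}(R)\vert\geq 3$) we get $\vert\mathrm{Min}(R)\vert\leq 2$; since $R$ is reduced and not a domain, $\mathrm{Nil}(R)=(0)$ while $Z(R)\neq(0)$, so the intersection of the minimal primes is $(0)$ but there is more than one of them, i.e. $\vert\mathrm{Min}(R)\vert=2$. Then $\Bbb A\Bbb G(R)\cong K_{m,n}$ by \cite[Corollary 24]{A1}, and since $K_{m,n}$ contains a $4$-cycle as soon as $m,n\geq 2$, acyclicity forces $\min(m,n)=1$, i.e. $\Bbb A\Bbb G(R)\cong K_{1,n}$.

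The core of the argument — and the step I expect to give the most trouble — is $(4)\Rightarrow(5)$. Write $\mathrm{Min}(R)=\{\mathfrak p_{1},\mathfrak p_{2}\}$, so $\mathfrak p_{1}\cap\mathfrak p_{2}=\mathrm{Nil}(R)=(0)$ and $Z(R)=\mathfrak p_{1}\cup\mathfrak p_{2}$. First I would record the standard facts $\mathrm{Ann}_{R}(\mathfrak p_{1})=\mathfrak p_{2}$ and $\mathrm{Ann}_{R}(\mathfrak p_{2})=\mathfrak p_{1}$ (if $x\in\mathfrak p_{2}$ then $x\mathfrak p_{1}\subseteq\mathfrak p_{1}\cap\mathfrak p_{2}=(0)$; conversely $x\mathfrak p_{1}=(0)\subseteq\mathfrak p_{2}$ with $\mathfrak p_{1}\nsubseteq\mathfrak p_{2}$ gives $x\in\mathfrak p_{2}$), so that every nonzero ideal contained in some $\mathfrak p_{i}$ has nonzero annihilator and is a vertex, while prime avoidance shows every vertex lies in $\mathfrak p_{1}$ or $\mathfrak p_{2}$; thus $\Bbb A(R)^{\ast}$ is the disjoint union of $V_{i}=\{I\mid (0)\neq I\subseteq\mathfrak p_{i}\}$. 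Using that $\mathrm{Ann}_{R}(I)\subseteq\mathfrak p_{3-i}$ for $(0)\neq I\subseteq\mathfrak p_{i}$, no two vertices of the same $V_{i}$ are adjacent in $\Bbb A\Bbb G(R)$ while any vertex of $V_{1}$ is adjacent to any vertex of $V_{2}$, so $\Bbb A\Bbb G(R)=K_{\vert V_{1}\vert,\vert V_{2}\vert}$. By $(4)$, after relabelling, $\vert V_{1}\vert=1$, so $\mathfrak p_{1}$ is the \emph{only} nonzero ideal inside $\mathfrak p_{1}$; in particular $\mathfrak p_{1}$ is a minimal ideal, and since $R$ is reduced $\mathfrak p_{1}^{2}\neq(0)$, whence $\mathfrak p_{1}^{2}=\mathfrak p_{1}$. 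Brauer's Lemma \cite[10.22]{lam} then produces an idempotent $e$ with $\mathfrak p_{1}=Re$ and $R\cong Re\times R(1-e)$; here $Re$ has only the trivial ideals (its ideals are the ideals of $R$ contained in $\mathfrak p_{1}$), so it is a field $F$, while $R(1-e)\cong R/\mathfrak p_{1}$ is an integral domain $D$ because $\mathfrak p_{1}$ is prime. Hence $R\cong F\times D$.

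Finally, $(5)\Rightarrow(6)$ is a computation: for $R\cong F\times D$ with $F$ a field and $D$ an integral domain one checks $\Bbb A(R)^{\ast}=\{F\times(0)\}\cup\{(0)\times J\mid (0)\neq J\text{ an ideal of }D\}$ (a nonzero ideal of a domain has zero annihilator), that $F\times(0)$ multiplies every other vertex to $(0)$ while the product of any two of the $(0)\times J$ is nonzero, so $\Bbb A\Bbb G(R)\cong K_{1,n}$; since $\mathrm{Min}(F\times D)=\{(0)\times D,\,F\times(0)\}$ has size $2$, Theorem \ref{indentical} yields $A_{I}(R)=\Bbb A\Bbb G(R)\cong K_{1,n}$, closing the cycle. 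Thus the only genuine work lies in $(4)\Rightarrow(5)$, where the bipartite structure of $\Bbb A\Bbb G(R)$ for a reduced ring with two minimal primes is combined with an idempotent-splitting (Brauer) argument to extract the direct product $F\times D$.
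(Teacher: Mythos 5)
Your proof is correct, and five of the six links ($(1)\Rightarrow(2)$ via Theorem \ref{thh1}, the trivial $(2)\Rightarrow(3)$ and $(6)\Rightarrow(1)$, the $(3)\Rightarrow(4)$ step via \cite[Theorem 7]{A2} and \cite[Corollary 24]{A1}, and $(5)\Rightarrow(6)$ via Theorem \ref{indentical}) follow essentially the same path as the paper; your only extra touches there are making explicit why $\vert\mathrm{Min}(R)\vert$ cannot be $1$ (the standing convention that $R$ is not a domain) and computing $\mathrm{Min}(F\times D)$ directly instead of invoking \cite[Theorem 7]{A2} again. The genuine divergence is $(4)\Rightarrow(5)$: the paper simply cites \cite[Corollary 2.3 $(2)\Rightarrow(3)$]{beh1}, whereas you prove it from scratch — you show $\mathbb{A}(R)^{\ast}$ splits (by prime avoidance and $\mathfrak p_1\cap\mathfrak p_2=(0)$) into the two parts $V_i$ of ideals inside $\mathfrak p_i$, that this bipartition realizes $\Bbb A\Bbb G(R)=K_{\vert V_1\vert,\vert V_2\vert}$, that the star hypothesis forces $\mathfrak p_1$ to be a minimal ideal, and then reducedness gives $\mathfrak p_1^2=\mathfrak p_1$ so Brauer's Lemma splits $R\cong Re\times R(1-e)$ with $Re$ a field and $R(1-e)\cong R/\mathfrak p_1$ a domain. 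This argument checks out (in particular $\mathrm{Ann}_R(\mathfrak p_i)=\mathfrak p_{3-i}$ and $\mathrm{Ann}_R(I)\subseteq\mathfrak p_{3-i}$ for $(0)\neq I\subseteq\mathfrak p_i$ are verified correctly), and what it buys is a self-contained proof independent of Behboodi--Rakeei's Corollary 2.3, reusing the same Brauer's Lemma device the paper itself employs in Theorem \ref{girt}; the cost is length where the paper gets the implication in one line by citation.
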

\begin{proof}
{ $ (1) \Rightarrow (2)$ Since $\mathrm{gr}(A_{I} (R))= \infty $,
$A_{I} (R) = \Bbb A \Bbb G (R)$ by Theorem \ref{thh1}.

 {\rm $(2) \Rightarrow (3)$} It is obvious.

 {\rm $(3) \Rightarrow (4)$} If $ \vert \mathrm{Min} (R) \vert \geq 3 $,
then $\mathrm{gr}(\Bbb A \Bbb G (R))=3$ by \cite [Theorem 7]{A2}, a contradiction. Thus
$ \vert \mathrm{Min} (R) \vert = 2 $. Since $R$ is reduced and $ \vert \mathrm{Min} (R) \vert = 2 $, $\Bbb A \Bbb G (R)$ is a complete bipartite graph by \cite [Corollary 24]{A1}.
Now, $\mathrm{gr}(\Bbb A \Bbb G (R))= \infty $ implies that
 $\Bbb A \Bbb G (R)$ is a star graph.

{\rm $(4) \Rightarrow (5)$} It follows from \cite [Corollary 2.3 $(2) \Rightarrow (3)$]{beh1}.

{\rm $(5) \Rightarrow (6)$} It is not hard to see that $\Bbb A \Bbb G (R)$ is a star graph,
and so by \cite [Theorem 7]{A2},  $ \vert \mathrm{Min} (R) \vert = 2 $. Hence $A_{I} (R) = \Bbb A \Bbb G (R)$
by Theorem \ref{indentical}, and thus $\mathrm{gr}(A_{I} (R)) = \mathrm{gr}(\Bbb A \Bbb G (R)) = \infty$.
Therefore, $A_{I} (R)$ is a star graph.

 {\rm $(6) \Rightarrow (1)$} It is clear.
}
\end{proof}

\noindent In view of Theorems \ref{star} and \ref{st123}, we  have  the following result.

\begin{cor}\label{final}
Let $R$ be a reduced ring.
Then $A_{I} (R) = \Bbb A \Bbb G (R)$ if and only if
$\mathrm{gr}(A_{I} (R)) = \mathrm{gr}(\Bbb A \Bbb G (R)) \in \lbrace 4, \infty \rbrace $.
\end{cor}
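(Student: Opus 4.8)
The plan is to prove the two implications separately, in each case reducing everything to the number of minimal primes of $R$ via the already-established equivalences. For the forward direction, suppose $R$ is reduced and $A_{I}(R) = \Bbb A \Bbb G(R)$. By Theorem \ref{indentical} this is equivalent to $\vert \mathrm{Min}(R)\vert = 2$. I would then split into two subcases according to whether $\Bbb A \Bbb G(R)$ is a star graph or not. If $\vert \mathrm{Min}(R)\vert = 2$ and $\Bbb A \Bbb G(R)$ is a star graph, then by Theorem \ref{st123} (the equivalence $(4)\Leftrightarrow(1)$, together with $(1)\Rightarrow(2)$) we get $A_{I}(R) = \Bbb A \Bbb G(R)$ and $\mathrm{gr}(A_{I}(R)) = \mathrm{gr}(\Bbb A \Bbb G(R)) = \infty$. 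If $\vert \mathrm{Min}(R)\vert = 2$ but $\Bbb A \Bbb G(R)$ is not a star, then by \cite[Corollary 24]{A1} it is a complete bipartite graph $K_{m,n}$ with $m,n\geq 2$ (the non-star case forces both parts to have size at least $2$), so Theorem \ref{star} (equivalence $(5)\Leftrightarrow(2)$) gives $A_{I}(R) = \Bbb A \Bbb G(R)$ and $\mathrm{gr}(A_{I}(R)) = \mathrm{gr}(\Bbb A \Bbb G(R)) = 4$. In either case $\mathrm{gr}(A_{I}(R)) = \mathrm{gr}(\Bbb A \Bbb G(R)) \in \{4,\infty\}$, as required.

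For the converse, assume $\mathrm{gr}(A_{I}(R)) = \mathrm{gr}(\Bbb A \Bbb G(R)) \in \{4,\infty\}$. If the common value is $4$, then $\mathrm{gr}(A_{I}(R)) = 4$, and the implication $(1)\Rightarrow(2)$ of Theorem \ref{star} immediately yields $A_{I}(R) = \Bbb A \Bbb G(R)$. If the common value is $\infty$, then $\mathrm{gr}(A_{I}(R)) = \infty$, and the implication $(1)\Rightarrow(2)$ of Theorem \ref{st123} again gives $A_{I}(R) = \Bbb A \Bbb G(R)$. So in both cases we are done. (Note that the hypothesis that the two girths are \emph{equal} is not even needed for the converse; only $\mathrm{gr}(A_{I}(R)) \in \{4,\infty\}$ is used. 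It is, however, part of the natural symmetric statement.)

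The argument is essentially a bookkeeping exercise over the equivalences in Theorems \ref{thh1}, \ref{indentical}, \ref{star}, and \ref{st123}; there is no serious obstacle. The one point that needs a moment's care is the forward direction's case split: one must observe that for a reduced ring with exactly two minimal primes, $\Bbb A \Bbb G(R)$ is always complete bipartite (by \cite[Corollary 24]{A1}), so it is either a star ($K_{1,n}$) or a genuine $K_{m,n}$ with $m,n\geq 2$ — there is no third possibility — and these two cases are exactly what Theorems \ref{st123} and \ref{star} handle. With that dichotomy in hand the proof closes in a few lines, and I would write it precisely as the chain of citations indicated above.
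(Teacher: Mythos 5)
Your proposal is correct and is essentially the paper's own argument: the paper gives no proof beyond the remark that the corollary holds ``in view of Theorems \ref{star} and \ref{st123}'', and your chain of citations --- Theorem \ref{indentical} together with the complete-bipartite dichotomy of \cite[Corollary 24]{A1} for the forward direction, and the implications $(1)\Rightarrow(2)$ of Theorems \ref{star} and \ref{st123} for the converse --- is precisely the bookkeeping that remark leaves implicit.
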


\noindent In the rest of this section, we focus on non-reduced rings for which $\Bbb A \Bbb G (R)$ and
$A_{I} (R)$ are identical.

\begin{thm} \label{t1}
Let $R$ be a non-reduced ring such that $Z(R)$ is not an ideal of $R$.
Then $A_{I} (R) \neq \Bbb A \Bbb G (R)$.
\end{thm}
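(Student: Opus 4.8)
We aim to produce a single edge of $A_{I}(R)$ that is not an edge of $\Bbb A\Bbb G(R)$; since $\Bbb A\Bbb G(R)$ is always a subgraph of $A_{I}(R)$ (Lemma \ref{lem}(2)), this will give $A_{I}(R)\neq\Bbb A\Bbb G(R)$. First I would exploit the hypothesis: $Z(R)$ is always closed under multiplication by elements of $R$, so the failure of $Z(R)$ to be an ideal is a failure of additive closure, and hence there are $a,b\in Z(R)$ with $s:=a+b\notin Z(R)$. Note $a,b\neq 0$ and $s$ is a non-zero-divisor. The key preliminary observation is that $\mathrm{Ann}_{R}(a)$ and $\mathrm{Ann}_{R}(b)$ are nonzero (because $a,b\in Z(R)$) and pairwise incomparable: if, say, $\mathrm{Ann}_{R}(a)\subseteq\mathrm{Ann}_{R}(b)$, then any nonzero $x\in\mathrm{Ann}_{R}(a)$ would satisfy $sx=ax+bx=0$, contradicting that $s$ is a non-zero-divisor. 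In particular $(a)\neq(b)$, and $(a),(b)$ are vertices of $A_{I}(R)$ whose annihilators $\mathrm{Ann}_{R}((a))=\mathrm{Ann}_{R}(a)$ and $\mathrm{Ann}_{R}((b))=\mathrm{Ann}_{R}(b)$ are incomparable.

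Next I would split on the value of $ab$. If $ab\neq 0$, the vertices $(a)$ and $(b)$ already work: they are adjacent in $A_{I}(R)$ by Corollary \ref{red1} (incomparable annihilators), while $(a)(b)=(ab)\neq 0$ shows they are not adjacent in $\Bbb A\Bbb G(R)$. The substantive case is $ab=0$, and this is where non-reducedness of $R$ is used. I would fix $t\neq 0$ with $t^{2}=0$ (e.g.\ a suitable power of a nonzero nilpotent). Since $s$ is a non-zero-divisor, $st\neq 0$, so $at\neq 0$ or $bt\neq 0$; as the entire setup is symmetric in $a$ and $b$ (including the relation $ab=0$ and the incomparability above), I may assume $at\neq 0$. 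My candidate edge is then $\{(a),\,\mathrm{Ann}_{R}(t)\}$.

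To finish the case $ab=0$, I would verify: (i) $\mathrm{Ann}_{R}(t)$ is a vertex, since $t\in\mathrm{Ann}_{R}(t)$ and $t\in\mathrm{Ann}_{R}(\mathrm{Ann}_{R}(t))$ because $t\,\mathrm{Ann}_{R}(t)=0$; (ii) $(a)\neq\mathrm{Ann}_{R}(t)$, since otherwise $\mathrm{Ann}_{R}(a)=\mathrm{Ann}_{R}(\mathrm{Ann}_{R}(t))\ni t$ would force $at=0$; (iii) $(a)\,\mathrm{Ann}_{R}(t)\neq 0$, because $at=a\cdot t$ lies in this product and $at\neq 0$, so the pair is a non-edge of $\Bbb A\Bbb G(R)$; and (iv) $(a)$ and $\mathrm{Ann}_{R}(t)$ are adjacent in $A_{I}(R)$, for which by Corollary \ref{red1} it suffices that $\mathrm{Ann}_{R}(a)$ and $\mathrm{Ann}_{R}(\mathrm{Ann}_{R}(t))$ be incomparable. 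One inclusion fails because $t\in\mathrm{Ann}_{R}(\mathrm{Ann}_{R}(t))\setminus\mathrm{Ann}_{R}(a)$. The other inclusion — the main obstacle — requires producing some $c\in\mathrm{Ann}_{R}(a)$ with $c\,\mathrm{Ann}_{R}(t)\neq 0$, which I expect to obtain from a dichotomy: since $ab=0$ we have $b\in\mathrm{Ann}_{R}(a)$, so if $b\,\mathrm{Ann}_{R}(t)\neq 0$ we take $c=b$; otherwise $\mathrm{Ann}_{R}(t)\subseteq\mathrm{Ann}_{R}(b)$, and choosing $c\in\mathrm{Ann}_{R}(a)\setminus\mathrm{Ann}_{R}(b)$ (possible by the incomparability of $\mathrm{Ann}_{R}(a)$ and $\mathrm{Ann}_{R}(b)$) gives $c\notin\mathrm{Ann}_{R}(t)$, hence $ct\neq 0$ and $c\,\mathrm{Ann}_{R}(t)\neq 0$. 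Thus $\mathrm{Ann}_{R}(a)\nsubseteq\mathrm{Ann}_{R}(\mathrm{Ann}_{R}(t))$, the annihilators are incomparable, and the proof is complete. Apart from locating that witness $c$, everything is routine bookkeeping with annihilators together with the single recurring fact that a non-zero-divisor annihilates nothing nonzero.
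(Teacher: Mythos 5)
Your proof is correct, but it takes a genuinely different route from the paper. The paper's argument is two lines: it quotes the characterization from Behboodi--Rakeei (the reference \cite{beh2}, Theorem 1.3) that for a non-reduced ring, $Z(R)$ failing to be an ideal forces $\mathrm{diam}(\Bbb A \Bbb G(R))=3$, and then concludes $A_{I}(R)\neq\Bbb A\Bbb G(R)$ because $\mathrm{diam}(A_{I}(R))\leq 2$ by Theorem \ref{theo}. You instead build an explicit edge of $A_{I}(R)$ that is not an edge of $\Bbb A\Bbb G(R)$: starting from $a,b\in Z(R)$ with $a+b$ regular, you get incomparability of $\mathrm{Ann}_R(a)$ and $\mathrm{Ann}_R(b)$, settle the case $ab\neq 0$ immediately via Corollary \ref{red1}, and in the case $ab=0$ use a nilpotent $t$ with $t^{2}=0$ and the vertex $\mathrm{Ann}_R(t)$, with the dichotomy on $b\,\mathrm{Ann}_R(t)$ supplying the witness $c$; all the verifications (vertexhood, distinctness, nonzero product, incomparable annihilators) check out, and the appeal to Lemma \ref{lem}(4)/Corollary \ref{red1} is the only graph-theoretic input. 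What each approach buys: the paper's proof is much shorter but leans on an external diameter theorem, while yours is self-contained, produces a concrete witness edge (more information than mere inequality of the graphs), and makes visible exactly where non-reducedness enters, namely only in the subcase $ab=0$ (which is also the only case that can occur in the reduced counterexamples such as a product of two fields, so the case split is not wasted).
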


\begin{proof}
{ Since $R$ is a non-reduced ring and $Z(R)$ is not an ideal of $R$,
$\mathrm{diam} (\Bbb A \Bbb G (R)) =3$ by \cite[Theorem 1.3 $(1) \Leftrightarrow (3)$]{beh2}.
So $A_{I} (R) \neq \Bbb A \Bbb G (R)$
by Theorem \ref{theo}.
}
\end{proof}

\begin{thm}\label{th2}
Let $R$ be a non-reduced ring. Then the induced subgraph of
$A_{I} (R)$ on nilpotent
ideals is a complete graph.
\end{thm}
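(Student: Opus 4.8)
The plan is to show that any two distinct nilpotent ideals $I$ and $J$ are adjacent in $A_{I}(R)$. First I would note that such ideals automatically lie in $\Bbb A(R)^{\ast}$: if $I\neq(0)$ and $n$ is least with $I^{n}=(0)$, then $n\geq 2$ and $(0)\neq I^{n-1}\subseteq\mathrm{Ann}_{R}(I)$, so $I$ is a vertex. By part~(1) of Lemma~\ref{lem} it suffices to prove that $\mathrm{Ann}_{R}(IJ)\neq\mathrm{Ann}_{R}(I)$ and $\mathrm{Ann}_{R}(IJ)\neq\mathrm{Ann}_{R}(J)$. If $IJ=(0)$ this is clear (indeed $I-\hspace{-.2cm}-J$ is then an edge of $\Bbb A\Bbb G(R)$, hence of $A_{I}(R)$ by part~(2) of Lemma~\ref{lem}), so I would reduce at once to the case $IJ\neq(0)$.

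The key step --- and the part I expect to be the real obstacle --- is an ``annihilator-stabilization'' claim, which I would isolate and prove by induction on $k$: if $\mathrm{Ann}_{R}(IJ)=\mathrm{Ann}_{R}(I)$, then $\mathrm{Ann}_{R}(IJ^{k})=\mathrm{Ann}_{R}(I)$ for every $k\geq 1$. The inductive step runs as follows: from $x\,IJ^{k+1}=(0)$ one gets $(xJ)(IJ^{k})=(0)$, hence $xJ\subseteq\mathrm{Ann}_{R}(IJ^{k})=\mathrm{Ann}_{R}(I)$ by the inductive hypothesis, so $x\,IJ=(xJ)I=(0)$, i.e. $x\in\mathrm{Ann}_{R}(IJ)=\mathrm{Ann}_{R}(I)$; the reverse inclusion $\mathrm{Ann}_{R}(I)\subseteq\mathrm{Ann}_{R}(IJ^{k+1})$ is automatic since $IJ^{k+1}\subseteq IJ$. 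Granting the claim, I would invoke nilpotency of $J$: choosing $m$ with $J^{m}=(0)$ gives $IJ^{m}=(0)$, so $\mathrm{Ann}_{R}(I)=\mathrm{Ann}_{R}(IJ^{m})=\mathrm{Ann}_{R}((0))=R$, forcing $I=(0)$, a contradiction. Hence $\mathrm{Ann}_{R}(IJ)\neq\mathrm{Ann}_{R}(I)$; interchanging the roles of $I$ and $J$ (and now using that $I$ is nilpotent) gives $\mathrm{Ann}_{R}(IJ)\neq\mathrm{Ann}_{R}(J)$ in exactly the same way. Combining the two inequalities with part~(1) of Lemma~\ref{lem} shows that $I-\hspace{-.2cm}-J$ is an edge of $A_{I}(R)$, and since $I,J$ were arbitrary distinct nilpotent ideals, the induced subgraph of $A_{I}(R)$ on the nilpotent ideals is complete.

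I would also explain why the more obvious routes fail: one cannot simply apply part~(4) of Lemma~\ref{lem}, nor hope to produce a single element of $\mathrm{Ann}_{R}(IJ)\setminus(\mathrm{Ann}_{R}(I)\cup\mathrm{Ann}_{R}(J))$ by hand, because for nested nilpotent ideals --- say $(p^{2})\subsetneq(p)$ in $\Bbb Z_{p^{n}}$ --- the annihilators are themselves nested, so no containment between $\mathrm{Ann}_{R}(I)$ and $\mathrm{Ann}_{R}(J)$ may be assumed. It is precisely the inductive stabilization trick, fed by $J^{m}=(0)$, that still extracts a contradiction from $IJ\neq(0)$. (Throughout I read ``nilpotent ideal'' as an ideal $I$ with $I^{n}=(0)$ for some $n$, which is exactly what makes such an $m$ available.)
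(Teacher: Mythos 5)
Your proposal is correct and takes essentially the paper's approach: both reduce, via part (1) of Lemma \ref{lem}, to ruling out $\mathrm{Ann}_{R}(IJ)=\mathrm{Ann}_{R}(I)$ (and symmetrically $=\mathrm{Ann}_{R}(J)$) when $IJ\neq(0)$, and then exploit the nilpotency of $J$ through the powers $IJ^{k}$. The only difference is bookkeeping: where you prove the stabilization $\mathrm{Ann}_{R}(IJ^{k})=\mathrm{Ann}_{R}(I)$ by induction and let $J^{m}=(0)$ force $\mathrm{Ann}_{R}(I)=R$, the paper takes the least $k$ with $IJ^{k}=(0)$ (necessarily $k\geq 2$ since $IJ\neq(0)$) and observes that $J^{k-1}$ annihilates $IJ$ but not $I$, giving the contradiction in one step.
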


\begin{proof}
{ Suppose that $I$ and $J$ are two distinct nilpotent ideals of $R$
such that $IJ \neq (0)$. Assume to the contrary, $\mathrm{Ann}_{R}(IJ) = \mathrm{Ann}_{R} (I) \cup \mathrm{Ann}_{R} (J)$. Without loss of generality and by part 1 of Lemma \ref{lem},
we may assume that $\mathrm{Ann}_{R}(IJ) = \mathrm{Ann}_{R} (I)$.
Let $n$ be the least positive integer such that $J^{n} =(0)$.
Suppose that $IJ^{k} \neq (0)$ for each $k$, $1 \leq k < n$.
Then $J^{n-1} \in \mathrm{Ann}_{R} (IJ) \setminus \mathrm{Ann}_{R}(I)$,
a contradiction. Assume that $k$ ($1 \leq k < n$)
is the least positive integer such that $IJ^{k} =(0)$.
Therefore $J^{k-1} \in \mathrm{Ann}_{R} (IJ) \setminus \mathrm{Ann}_{R} (I)$,
a contradiction. Thus $\mathrm{Ann}_{R}(IJ) \neq \mathrm{Ann}_{R} (I) \cup \mathrm{Ann}_{R} (J)$, and hence
$I,J$ are adjacent.}
\end{proof}

\noindent To prove Theorem \ref{prin}, the following lemma is needed.

\begin{lem}\label{non1}
Let $R$ be a non-reduced ring and suppose that $(\mathrm{Nil}(R))^{2} = (0)$. Then the induced subgraph
of $\Bbb A \Bbb G (R)$ on nilpotent ideals is a complete graph.
Moreover, if $R$ is not a principal ideal ring, then the converse is also true.
\end{lem}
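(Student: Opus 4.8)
The plan is to treat the two implications separately. For the forward implication, note first that every non-zero nilpotent ideal $I$ is a vertex of $\Bbb A \Bbb G(R)$: if $n$ is least with $I^{n}=(0)$ then $n\geq 2$, so $(0)\neq I^{n-1}\subseteq\mathrm{Ann}_{R}(I)$; also every element of a nilpotent ideal is nilpotent, so each nilpotent ideal is contained in $\mathrm{Nil}(R)$. Hence if $(\mathrm{Nil}(R))^{2}=(0)$ and $I,J$ are distinct nilpotent ideals, then $IJ\subseteq(\mathrm{Nil}(R))^{2}=(0)$, so $I-\hspace{-.2cm}-J$ is an edge of $\Bbb A \Bbb G(R)$; since $R$ is non-reduced there is at least one such vertex, and the induced subgraph on nilpotent ideals is complete.

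For the converse, suppose $R$ is not a principal ideal ring, the induced subgraph of $\Bbb A \Bbb G(R)$ on nilpotent ideals is complete, and, toward a contradiction, $(\mathrm{Nil}(R))^{2}\neq(0)$. The key observation is that for any nilpotent $c,d$ with $cd\neq 0$ one must have $(c)=(d)$: otherwise $(c)$ and $(d)$ are distinct non-zero nilpotent vertices with $(c)(d)=(cd)\neq(0)$, hence non-adjacent, contradicting completeness. Picking nilpotent $a,b$ with $ab\neq0$, this gives $b=ra$ and $0\neq ab=ra^{2}$, so $a^{2}\neq0$; and for any nilpotent $d\notin(a)$, applying the observation to $a$ and $d$ forces $ad=0$, whence $a(a+d)=a^{2}\neq0$ (and $a+d$ is nilpotent), whence $(a+d)=(a)$, whence $d\in(a)$ --- a contradiction. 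So every nilpotent element lies in $(a)$, i.e. $\mathrm{Nil}(R)=(a)$ is a principal ideal. Since $(a^{2})=(a)$ would force $a=0$ (iterate $a\in(a^{2})$), the ideals $(a)$ and $(a^{2})$ are distinct non-zero nilpotent ideals, so completeness gives $(a)(a^{2})=(a^{3})=(0)$; thus $\mathrm{Nil}(R)=(a)$ with $a^{3}=0\neq a^{2}$.

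The remaining step --- turning this into a contradiction with ``$R$ is not a principal ideal ring'' --- is the main obstacle. Applying completeness once more, any ideal $J$ with $(0)\subsetneq J\subsetneq(a)$ is adjacent to $(a)$, so $aJ=(0)$; hence the only ideals inside $\mathrm{Nil}(R)=(a)$ are $(0)$, $(a)$, and ideals inside $B:=(a)\cap\mathrm{Ann}_{R}(a)$, where $B^{2}\subseteq(a)\,\mathrm{Ann}_{R}(a)=(0)$. The plan from here is to combine this rigid description with the fact that $R/\mathrm{Nil}(R)$ is reduced and a careful analysis of $\mathrm{Ann}_{R}(a)$ and $\mathrm{Ann}_{R}(a^{2})$ to produce an idempotent splitting $R$ as a product (as in the proof of Theorem \ref{girt}, via Brauer's Lemma \cite[10.22]{lam}) whose non-reduced factor has a unique non-trivial ideal; by \cite[Theorem 1.4]{beh1} that factor, and hence --- after checking its reduced complement is a field --- the ring $R$ itself, is a principal ideal ring, contrary to hypothesis. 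I expect the genuinely delicate point to be excluding non-principal ideals meeting $R\setminus\mathrm{Nil}(R)$, which is exactly where the non-principal-ideal-ring hypothesis must be re-invoked; once the contradiction is obtained, $(\mathrm{Nil}(R))^{2}=(0)$, as required.
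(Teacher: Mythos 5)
Your forward direction is complete and correct (the paper dismisses it with ``clearly''), but the converse is where the substance of the lemma lies, and there your text stops at a plan: after correctly showing that completeness of the induced subgraph together with $(\mathrm{Nil}(R))^{2}\neq(0)$ forces $\mathrm{Nil}(R)=Ra$ with $a^{3}=0\neq a^{2}$ and $aJ=(0)$ for every ideal $(0)\subsetneq J\subsetneq Ra$, you never derive the promised contradiction with ``$R$ is not a principal ideal ring''; you only sketch a hoped-for Brauer-type splitting and yourself flag the delicate point. This is a genuine gap, and it cannot be closed, because the converse is false as stated. Take $R=F[x]/(x^{3})\times F[u,v]$ with $F$ a field: $R$ is non-reduced; $R$ is not a principal ideal ring (the ideal $(0)\times(u,v)$ is not principal); the only non-zero nilpotent ideals are $(x)\times(0)$ and $(x^{2})\times(0)$, whose product is $(0)$, so the induced subgraph of $\Bbb A \Bbb G (R)$ on nilpotent ideals is $K_{2}$, a complete graph; yet $(\mathrm{Nil}(R))^{2}=(x^{2})\times(0)\neq(0)$. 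Note that this example satisfies exactly the configuration your partial argument arrives at ($\mathrm{Nil}(R)$ principal, cube zero, square non-zero), which shows why no further analysis of $\mathrm{Ann}_{R}(a)$ and $\mathrm{Ann}_{R}(a^{2})$ can yield your target decomposition: here the non-reduced factor has two non-trivial ideals and the other factor is not a field, and all the non-principality of $R$ lives away from $\mathrm{Nil}(R)$, so the non-PIR hypothesis simply has no grip on the nilpotents.

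For comparison, the paper's own proof of the converse finishes quickly but only by an error at precisely the point where you (rightly) got stuck. It picks a nilpotent $x$ with $x^{2}\neq 0$, lets $n\geq 3$ be minimal with $x^{n}=0$, observes that $x$ and $x^{n-1}+x$ are distinct \emph{elements}, and then invokes completeness to claim $(Rx)\bigl(R(x^{n-1}+x)\bigr)=(0)$. But $x^{n-1}+x=x(1+x^{n-2})$ and $1+x^{n-2}$ is a unit, so $R(x^{n-1}+x)=Rx$ is the \emph{same} vertex; adjacency says nothing, and in fact $(Rx)\bigl(R(x^{n-1}+x)\bigr)=Rx^{2}\neq(0)$. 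Your ``key observation'' treats the case $(c)=(d)$ correctly, which is exactly why you could not reproduce this shortcut. So the honest verdict is: your proposal is incomplete, and the missing step is not fillable without strengthening the hypotheses (e.g.\ ruling out decompositions as above, say by localizing the non-principality assumption to the nilpotent part or assuming $R$ indecomposable or local), rather than being a routine detail to supply.
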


\begin{proof}
{Let $(\mathrm{Nil}(R))^{2} = (0)$.
Then clearly the induced subgraph
of $\Bbb A \Bbb G (R)$ on nilpotent ideal is complete graph.
Suppose that $R$ is not a principal ideal ring. We need only to show that
$x^{2} = 0$ for each $x \in \mathrm{Nil(R)}^{\ast}$.
Let $x \in \mathrm{Nil(R)}^{\ast}$ and $x^{2} \neq 0$. Suppose that
$n$ be the least positive integer such that $x^{n} =0$.
Thus $n\geq 3$ and hence $x$, $x^{n-1} +x$ are distinct elements
of $\mathrm{Nil(R)}^{\ast}$. Since $(Rx)(R(x^{n-1}+x))=(0)$
and $x^{n}=0$,  $x^{2}=0$, a contradiction. Therefore $x^{2} =0$ for each
$x \in \mathrm{Nil(R)}^{\ast}$.
}
\end{proof}

\begin{remark}
It is known that if $R$ is a non-reduced principal ideal ring, then $(\mathrm{Nil}(R))^n =(0)$, for some positive integer $n$.
Thus $\mathrm{Nil}(R),\ldots,\mathrm{Nil}(R)^{n-1}$ are only nilpotent ideals of
$R$. If $n=2$ or $3$, then the induced subgraph
of $\Bbb A \Bbb G (R)$ on nilpotent ideals is a complete graph.
\end{remark}

\begin{thm}\label{prin}
Let $R$ be a non-reduced ring  that is not a principal ideal ring, and suppose that
$(\mathrm{Nil}(R))^{2} \neq (0)$.
Then $A_{I} (R) \neq \Bbb A \Bbb G (R)$
and $\mathrm{gr}(A_{I} (R)) = 3$.
\end{thm}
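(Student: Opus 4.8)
The plan is to exploit Lemma \ref{non1} together with Theorem \ref{th2}. Since $R$ is not a principal ideal ring and $(\mathrm{Nil}(R))^2 \neq (0)$, Lemma \ref{non1} (applied in its ``converse'' direction, contrapositively) tells us that the induced subgraph of $\Bbb A \Bbb G (R)$ on nilpotent ideals is \emph{not} a complete graph. On the other hand, Theorem \ref{th2} says that the induced subgraph of $A_I(R)$ on nilpotent ideals \emph{is} a complete graph. Comparing these two facts immediately yields $A_I(R) \neq \Bbb A \Bbb G (R)$: there must be two distinct nilpotent ideals $I,J$ with $IJ \neq (0)$ that are adjacent in $A_I(R)$ but not in $\Bbb A \Bbb G (R)$. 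This settles the first assertion with essentially no work.

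For the girth statement, I first note that since $A_I(R) \neq \Bbb A \Bbb G (R)$, Corollary \ref{cog13} gives $\mathrm{gr}(A_I(R)) \in \{3,4\}$, so it suffices to rule out $4$. The cleanest route is to produce an explicit triangle in $A_I(R)$ consisting of nilpotent ideals, none of whose edges lies in $\Bbb A \Bbb G (R)$ --- though for the girth claim we only need the triangle itself. From $(\mathrm{Nil}(R))^2 \neq (0)$ there is $x \in \mathrm{Nil}(R)$ with $x^2 \neq 0$; let $n \geq 3$ be least with $x^n = 0$. The idea is to use the powers $Rx, Rx^2, \dots$ and suitable sums such as $R(x^{n-1}+x)$ as in the proof of Lemma \ref{non1}. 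Concretely, if $(\mathrm{Nil}(R))^2 \ne 0$ then by Lemma \ref{non1} there is $x$ with $x^2 \ne 0$, and one can try $I = Rx$, $J = Rx^2$, and a third ideal $K$; by Theorem \ref{th2} any two distinct nilpotent ideals with nonzero product are adjacent in $A_I(R)$, so the only task is to exhibit three \emph{distinct} nilpotent ideals that are pairwise products-nonzero, which must exist because otherwise the nilpotent radical would be very constrained. Alternatively, if $R$ is not a principal ideal ring then $\mathrm{Nil}(R)$ itself need not be principal, giving room to find three distinct nilpotent ideals; and $\mathrm{Nil}(R), \mathrm{Nil}(R)^2$ are distinct (as $(\mathrm{Nil}(R))^2 \ne 0$ and, one argues, $\mathrm{Nil}(R)^2 \subsetneq \mathrm{Nil}(R)$), with a third ideal interpolating or sitting alongside, and Theorem \ref{th2} makes them pairwise adjacent.

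Here is the cleanest version of the triangle argument. Take $x \in \mathrm{Nil}(R)$ with $x^2 \neq 0$ and $n \geq 3$ minimal with $x^n = 0$. Consider $I = Rx$ and $I' = Rx^2$; these are distinct nilpotent ideals (if $Rx = Rx^2$ then $x = rx^2$ for some $r$, forcing $x = r^{n-1}x^n = 0$ by iteration, a contradiction) and $I \cdot I' = Rx^3$. If $x^3 \neq 0$ we are in good shape; in any case, since $R$ is not a principal ideal ring there is a nilpotent ideal $K$ which is not principal, hence $K \neq Rx$ and $K \neq Rx^2$, and $K^2 \subseteq K$ --- the subtle point is ensuring $KI \ne 0$ and $KI' \ne 0$. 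The honest main obstacle is precisely this: choosing the three nilpotent ideals so that all three pairwise products are nonzero, so that Theorem \ref{th2} forces all three edges. I expect this to be handled by a case division on whether $x^3 = 0$ (so $n = 3$) or $x^3 \neq 0$: in the first case use $Rx$, $Ry$ for a second non-proportional nilpotent $y$ (available since $\mathrm{Nil}(R)$ is not principal), and $R(x+y)$; in the second case the chain $Rx \supsetneq Rx^2 \supsetneq \cdots$ together with one off-chain non-principal nilpotent ideal does the job. Once three distinct nilpotent ideals with pairwise nonzero products are in hand, Theorem \ref{th2} gives a triangle in $A_I(R)$, so $\mathrm{gr}(A_I(R)) = 3$.
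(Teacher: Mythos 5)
Your first assertion is handled exactly as in the paper: combining Theorem \ref{th2} with the converse direction of Lemma \ref{non1} (which is where the ``not a principal ideal ring'' hypothesis enters) immediately gives two distinct nilpotent ideals $I,J$ with $IJ\neq(0)$ that are adjacent in $A_{I}(R)$ but not in $\Bbb A \Bbb G (R)$, so $A_{I}(R)\neq \Bbb A \Bbb G (R)$. That part is correct and is the paper's own argument.

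The girth part, however, has a genuine gap. Your plan hinges on producing three distinct nonzero nilpotent ideals (moreover with pairwise nonzero products), and you assert such a triple ``must exist because otherwise the nilpotent radical would be very constrained.'' That existence claim is false under the stated hypotheses: take $R=\Bbb Z_{8}\times k[u,v]$. This ring is non-reduced, is not a principal ideal ring (the ideal $(u,v)$ of $k[u,v]$ is not principal), and $(\mathrm{Nil}(R))^{2}=4\Bbb Z_{8}\times(0)\neq(0)$; yet its only nonzero nilpotent ideals are $2\Bbb Z_{8}\times(0)$ and $4\Bbb Z_{8}\times(0)$, so no triangle of nilpotent ideals exists at all (the theorem is still true for this ring, but the triangle must use a non-nilpotent vertex, e.g.\ $(0)\times(u)$). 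A secondary point: you misquote Theorem \ref{th2} --- it asserts completeness of the induced subgraph on nilpotent ideals with no product condition (pairs with zero product are already $\Bbb A \Bbb G$-edges), so the ``pairwise nonzero products'' constraint you impose is unnecessary; but even without it your construction cannot go through, and the promised case division is never actually carried out. The paper closes the argument differently and you should too: by Corollary \ref{cog13}, $\mathrm{gr}(A_{I}(R))\in\{3,4\}$, and if the girth were $4$ then Theorem \ref{girt} would force $R\cong F\times S$ with $S$ having a unique non-trivial ideal; such a ring satisfies $(\mathrm{Nil}(R))^{2}=(0)$ (and is in fact a principal ideal ring), contradicting the hypotheses of Theorem \ref{prin}. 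Hence $\mathrm{gr}(A_{I}(R))=3$.
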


\begin{proof}
{ Since $(\mathrm{Nil} (R))^{2} \neq (0)$, $A_{I} (R) \neq \Bbb A \Bbb G (R)$
by Theorem \ref{th2} and Lemma \ref{non1}.
By Corollary \ref{cog13}, $\mathrm{gr}(A_{I} (R))
\in \lbrace 3,4 \rbrace$. Let $R \cong F \times S$,
where $F$ is a field and $S$ is a ring with a unique
non-trivial ideal. Since $R$ is a non-reduced ring
and  $(\mathrm{Nil} (R))^{2} = (0)$, we deduce from Theorem \ref{girt} that $\mathrm{gr}(A_{I} (R)) =3$.
}
\end{proof}

\noindent   Let $R$ be a ring. By \cite[Lemma 1.11]{beh1},
every minimal ideal is a vertex of $\Bbb A \Bbb G (R)$. Using fact, we may state the following lemma.

\begin{lem} \label{mini}
Let $R$ be a ring. If $I$ is a minimal ideal of $R$, then
$N_{\Bbb A \Bbb G(R)} (I) = N _{A_{I}(R)} (I)$.
\end{lem}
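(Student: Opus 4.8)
The plan is to show containment in both directions. The inclusion $N_{\Bbb A \Bbb G(R)}(I) \subseteq N_{A_I(R)}(I)$ is immediate from part (2) of Lemma \ref{lem}, since every edge of $\Bbb A \Bbb G(R)$ is an edge of $A_I(R)$. So the substance is the reverse inclusion: if $J$ is adjacent to $I$ in $A_I(R)$ and $I$ is a minimal ideal, then $IJ = (0)$. First I would suppose $J \in N_{A_I(R)}(I)$, so $\mathrm{Ann}_R(IJ) \neq \mathrm{Ann}_R(I) \cup \mathrm{Ann}_R(J)$, and pick $K \subseteq \mathrm{Ann}_R(IJ)$ with $KI \neq (0)$ and $KJ \neq (0)$. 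Then $KI$ is a non-zero ideal contained in $I$, and minimality of $I$ forces $KI = I$. Hence $I = KI \subseteq K \subseteq \mathrm{Ann}_R(IJ)$, which gives $I \cdot IJ = (0)$, i.e. $I^2 J = (0)$.

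Next I would exploit minimality of $I$ again on $I^2$: since $I^2 \subseteq I$ and $I^2 = (KI)I \supseteq \dots$ — more directly, $I^2$ is an ideal contained in $I$, so either $I^2 = (0)$ or $I^2 = I$. In the second case, $I^2 J = (0)$ immediately gives $IJ = (0)$, as desired. In the first case $I^2 = (0)$; here I would use that $KI = I$ together with $K \subseteq \mathrm{Ann}_R(IJ)$, or alternatively observe that $IJ$ is again a non-zero ideal (it is non-zero precisely because $I$ and $J$ are \emph{not} adjacent in $\Bbb A \Bbb G(R)$ is what we are trying to rule out) — actually the cleaner route is: $IJ \subseteq I$ since $I$ is an ideal, wait, that is false in general. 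Let me instead use: $KJ \subseteq J$ and $KIJ = (0)$; combined with $KI = I$ we get $IJ = (KI)J = K(IJ) \subseteq $ nothing useful directly. The correct closing move is to use Lemma \ref{ll1}-style reasoning: if $IJ \neq (0)$, then since $I = KI \subseteq \mathrm{Ann}_R(IJ)$ we have $I \subseteq \mathrm{Ann}_R(IJ) \setminus \{I,J\}$ would need $I \neq IJ$ and $I \neq$ nothing, but $I$ itself with $I \cdot I = I^2$ and $I \cdot J = IJ$: if $I^2 \neq (0)$ and $IJ \neq (0)$ then Lemma \ref{ll1} produces the path $I - IJ - J$... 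This needs care, so I will restructure.

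The clean argument: assume $IJ \neq (0)$ for contradiction. From $I = KI \subseteq \mathrm{Ann}_R(IJ)$ we get $I^2 J = (0)$. Since $I$ is minimal, $I^2 \in \{(0), I\}$; if $I^2 = I$ then $IJ = I^2 J = (0)$, contradiction, so $I^2 = (0)$. Now consider the ideal $IJ$: it is non-zero, $IJ \subseteq \mathrm{Ann}_R(I)$ is false in general but $I(IJ) = I^2 J = (0)$ shows $IJ \subseteq \mathrm{Ann}_R(I)$ — \emph{this} is correct. Also $J(IJ) = I J^2$; I do not control this. Instead, note $IJ$ is a non-zero ideal with $IJ \neq I$ (as $I \cdot(IJ) = (0) \neq I^2$ would need... if $IJ = I$ then $I = IJ \subseteq \mathrm{Ann}_R(I)$ so $I^2 = (0)$, consistent, so I cannot exclude $IJ = I$ this way) — but if $IJ = I$ then $I \subseteq J$, and then $I^2 \subseteq IJ = I$ with $IJ = I \neq (0)$ contradicts $I^2=(0)$ via $I^2 = I \cdot I \subseteq I \cdot J = IJ = I$... no. I would handle $IJ = I$ separately: $IJ = I$ means $IJ \neq (0)$ and we may take the vertex $IJ = I$, but we need a vertex distinct from $I,J$. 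Ultimately the honest approach is to invoke Lemma \ref{ll1} with $K$: since $I - J$ is an edge of $A_I(R)$, if it were not an edge of $\Bbb A\Bbb G(R)$, Lemma \ref{ll1} (or part (6) of Lemma \ref{lem}) guarantees a witness $K'$ with $K'I \neq 0 \neq K'J$ and $K' \subseteq \mathrm{Ann}_R(IJ)$; then $K'I$ is a non-zero subideal of $I$, so $K'I = I$, forcing $I \subseteq \mathrm{Ann}_R(IJ)$, hence $\mathrm{Ann}_R(I) = R$ after noting $I \cdot I J = 0$ and iterating — the main obstacle, which I flag explicitly, is closing off the case $I^2 = (0)$ cleanly, and I expect the resolution is that $\mathrm{Ann}_R(IJ) \supseteq I$ together with $J \subseteq \mathrm{Ann}_R(I)$-type symmetry (from $KJ \neq 0$, $KJ \subseteq J$, but $J$ need not be minimal, so this branch genuinely requires the structure of $\mathrm{Ann}_R(I)$ being a minimal/large ideal) forces $IJ \subseteq \mathrm{Ann}_R(I) \cap (\text{something})$ and ultimately $IJ = (0)$; I would write this out carefully using that $\mathrm{Ann}_R(I) \supseteq I^2 = (0)$ is vacuous, so instead use $KI = I \Rightarrow \mathrm{Ann}_R(I) \subseteq \mathrm{Ann}_R(KI) $ — standard but fiddly. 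The key step I expect to be the real obstacle is ruling out $IJ \neq (0)$ when $I^2 = (0)$, and I anticipate it follows by applying part (6) of Lemma \ref{lem} to the non-edge... but $I-J$ \emph{is} an edge, so that does not apply; the resolution must be that $I = KI \subseteq K \subseteq \mathrm{Ann}_R(IJ)$ gives $\mathrm{Ann}_R(IJ) \ni$ every element of $I$, and since also (by minimality applied to the non-zero ideal $IJ$, which satisfies $IJ \subsetneq$ everything) $IJ$ itself is minimal, we get $I \cdot IJ = 0$ with $I, IJ$ both minimal — if $I = IJ$ then $I \subseteq J$ and $I^2 \subseteq IJ = I$ forces, with $I^2 = (0)$, that $IJ = I \neq (0)$, so $I \not\subseteq \mathrm{Ann}_R(I)$, i.e. there is $a \in I$ with $aI \neq 0$, contradicting $I^2 = (0)$; hence $I \neq IJ$, and then $IJ = I \cdot J$ with $I \subseteq \mathrm{Ann}_R(IJ)$, i.e. $IJ \cdot IJ = (0)$ too, and I close by noting $IJ \subseteq \mathrm{Ann}_R(I)$, so if $\mathrm{Ann}_R(I)$ were to contain a non-zero ideal properly inside it... here I would finally appeal directly to the hypothesis by contradiction via Lemma \ref{ll1} applied to the edge $I-J$ and witness $IJ \in \mathrm{Ann}_R(IJ) \setminus \{I,J\}$ with $(IJ)I = I^2 J = 0$ — wait, that is zero, so $IJ$ is \emph{not} a valid witness, and that is exactly what shows consistency, not contradiction. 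Given the delicacy, in the final writeup I would simply argue: $KI = I$ gives $I \subseteq \mathrm{Ann}_R(IJ)$, so $I^2 J = 0$; minimality gives $I^2 = 0$ or $I^2 = I$; the latter yields $IJ = 0$ done; for the former, $KJ \neq 0$ and $KJ \subseteq J$ combined with $KIJ = 0$ shows $KJ \subseteq \mathrm{Ann}_R(I)$, and since $I = KI$, we have $\mathrm{Ann}_R(I) = \mathrm{Ann}_R(KI) \supseteq \mathrm{Ann}_R(K)$, and $J \in \mathrm{Ann}_R(KI)$... — I concede the clean finish needs one more observation which I would locate during detailed writing, most likely that $J \subseteq \mathrm{Ann}_R(I)$ forces $IJ = (0)$ directly, obtained from $KJ \subseteq \mathrm{Ann}_R(I)$ and $KJ$ generating enough of $J$ via $K$-multiplication — and if not all of $J$, then the non-edge-free structure kicks in. The single hardest point is thus the $I^2=(0)$ sub-case, and everything else is routine.
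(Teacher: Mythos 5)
There is a genuine gap: your argument never closes. You reduce to showing that an $A_I(R)$-neighbour $J$ of the minimal ideal $I$ satisfies $IJ=(0)$, split into the cases $I^2=I$ and $I^2=(0)$, and you explicitly concede that you cannot finish the case $I^2=(0)$; everything after that point is exploratory and does not constitute a proof. The root of the trouble is that you dismissed as ``false in general'' the one fact that settles the matter: for ideals $I,J$ of a commutative ring one always has $IJ\subseteq I\cap J\subseteq I$ (each generator $ab$ with $a\in I$, $b\in J\subseteq R$ lies in $I$ because $I$ is an ideal). Granting this, minimality of $I$ gives $IJ=(0)$ or $IJ=I$; in the latter case $\mathrm{Ann}_R(IJ)=\mathrm{Ann}_R(I)$, so by part (1) of Lemma \ref{lem} the pair $I,J$ is \emph{not} an edge of $A_I(R)$, a contradiction. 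This is essentially the paper's proof (the paper phrases it via $I\cap J=(0)$ versus $I\subsetneqq J$, but the mechanism is the same), and it needs no case split on $I^2$.

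It is also worth noting that your own setup already contained a one-line finish that you overlooked: having chosen $K\subseteq \mathrm{Ann}_R(IJ)$ with $KI\neq(0)$, minimality gives $KI=I$, and then
$IJ=(KI)J=K(IJ)=(0)$,
because $K$ annihilates $IJ$ by construction --- exactly at the spot where you wrote ``$K(IJ)\subseteq$ nothing useful directly.'' So the approach via a witness ideal $K$ is salvageable and is even slightly different in flavour from the paper's containment argument, but as submitted the proof is incomplete, and the correct elementary fact $IJ\subseteq I$ must be restored rather than rejected.
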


\begin{proof}
{ By part 2 of Lemma \ref{lem}, it is enough to show that
$ N _{A_{I}(R)} (I) \subseteq N_{\Bbb A \Bbb G(R)} (I)$.
Let $I-\hspace{-.2cm}-J$ be an edge of $A_{I} (R)$.
Since $I$ is a minimal ideal of $R$, either $I \subsetneqq J$ or $I \cap J = (0)$. If
$I \cap J = (0)$, then $IJ = (0)$ and $I-\hspace{-.2cm}-J$ is
an edge of $\Bbb A \Bbb G (R)$. Now, suppose that $I \subsetneqq J$ and $IJ \neq (0)$.
Thus $IJ =I$, and hence $\mathrm{Ann}_{R} (IJ) = \mathrm{Ann}_{R} (I)$.
Therefore $I-\hspace{-.2cm}-J$ is not an edge of
$A_{I} (R)$ by part 1 of Lemma \ref{lem},
a contradiction. This means that $IJ = (0) $ and so
$N_{\Bbb A \Bbb G(R)} (I) = N _{A_{I}(R)} (I)$.
}
\end{proof}

\noindent The following Theorem was proved in \cite{sal}.

\begin{thm}\label{salehi}
Let $R$ be a non-reduced ring. Then the following statements
are equivalent:

\noindent {\rm(1)}  $\mathrm{gr}(\Bbb A \Bbb G (R))=4$;

\noindent {\rm(2)} $A_{I} (R) = \Bbb A \Bbb G (R) = K_{2,3}$;

\noindent {\rm(3)} $\Gamma (R) =\overline{K_{1,3}}$;

\noindent {\rm (4)} $R$ is ring-isomorphic to either
$\Bbb Z _{2} \times \Bbb Z_{4}$ or $\Bbb Z _{2} \times \Bbb Z_{2} [X] /(X^{2})$.
\end{thm}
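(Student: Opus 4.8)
The plan is to establish the cycle of implications $(1)\Rightarrow(4)\Rightarrow(3)\Rightarrow(2)\Rightarrow(1)$, leaning on the structural results already available. For $(1)\Rightarrow(4)$, I would invoke the known classification of non-reduced rings with $\mathrm{gr}(\mathbb A\mathbb G(R))=4$: since $R$ is non-reduced and $\mathbb A\mathbb G(R)$ contains a $4$-cycle but no triangle, by \cite[Theorem 2.1]{beh1} $\mathrm{diam}(\mathbb A\mathbb G(R))\le 3$, and one shows $Z(R)$ must be an ideal (otherwise Theorem \ref{t1} plus the diameter-$3$ situation forces a triangle via Lemma \ref{ll1}). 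Then $\mathrm{Nil}(R)$ is a (nonzero) ideal contained in $Z(R)$, and the girth-$4$ constraint is extremely rigid: I would argue that $|\mathbb A(R)^\ast|$ is small, that $R$ has a unique maximal ideal $\mathfrak m = Z(R)$ or else decomposes as a product, and run through the short list of possibilities to land on $\mathbb Z_2\times\mathbb Z_4$ or $\mathbb Z_2\times\mathbb Z_2[X]/(X^2)$. Concretely, in a product $R_1\times R_2$ one needs each factor to contribute just enough vertices: a field times a ring with exactly one non-trivial ideal whose square is zero gives exactly $\mathbb Z_2\times\mathbb Z_4$-type behavior when the field is $\mathbb Z_2$; the bipartite-with-parts-$2,3$ shape pins down the sizes.

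For $(4)\Rightarrow(3)$ and $(4)\Rightarrow(2)$, I would simply compute. For $R\cong\mathbb Z_2\times\mathbb Z_4$: the nonzero proper ideals are $\mathbb Z_2\times 0$, $\mathbb Z_2\times 2\mathbb Z_4$, $0\times\mathbb Z_4$, $0\times 2\mathbb Z_4$, $\mathbb Z_2\times\mathbb Z_4$ is the whole ring so excluded — so $|\mathbb A(R)^\ast|$ is exactly $5$, and direct verification of the adjacency conditions $IJ=0$ versus $\mathrm{Ann}(IJ)\ne\mathrm{Ann}(I)\cup\mathrm{Ann}(J)$ gives $A_I(R)=\mathbb A\mathbb G(R)=K_{2,3}$. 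The same bookkeeping works verbatim for $\mathbb Z_2\times\mathbb Z_2[X]/(X^2)$ since $\mathbb Z_4$ and $\mathbb Z_2[X]/(X^2)$ have isomorphic ideal lattices and the same annihilator behavior. Statement $(3)$, about the zero-divisor graph $\Gamma(R)$ being the complement of $K_{1,3}$ (equivalently, a path on three vertices plus an isolated vertex — here $\Gamma(R)$ has exactly $4$ vertices), is likewise a one-line check on the same two rings; this is essentially the Akbari–Mohammadian-type computation and I would cite or reproduce it briefly. Finally $(2)\Rightarrow(1)$ is immediate: $K_{2,3}$ has girth $4$.

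The main obstacle is $(1)\Rightarrow(4)$: ruling out every other non-reduced ring whose annihilating-ideal graph has girth $4$. The delicate point is that girth $4$ forbids triangles, and Lemma \ref{ll1} together with Theorem \ref{girt} shows that a triangle in $A_I(R)$ (with a non-$\mathbb A\mathbb G$ edge) appears unless $R$ is a field times a ring with a unique non-trivial ideal — but here we want $\mathbb A\mathbb G(R)$, not $A_I(R)$, to have girth $4$, and we must also forbid girth-$3$ configurations inside $\mathbb A\mathbb G(R)$ itself. So the real work is: (a) show $R$ is local-or-a-product, (b) bound the number of nilpotent ideals using Theorem \ref{th2} (the induced subgraph on nilpotent ideals is complete, so there can be at most $2$ of them, lest a triangle appear), forcing $\mathrm{Nil}(R)^2=0$ and $|\{\mathrm{Nil}(R)\}|$ small, and (c) combine with the bipartite structure forced by the absence of odd cycles to conclude the two parts have sizes summing to exactly $5$ vertices arranged as $K_{2,3}$, which over the relevant residue fields leaves only the two listed rings. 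I would also double-check that $\mathbb Z_2$ (not a larger field) is forced — a factor $F\times F'$ with $|F|\ge 3$ or a non-principal local factor would inflate the vertex count or create a triangle.
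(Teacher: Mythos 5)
This statement is not one the paper proves: it is quoted from \cite{sal} precisely so that the authors can point out that it is \emph{false}, and the paper then replaces it by the corrected Theorem \ref{grith}. The flaw in your proposal surfaces exactly where the flaw in the original claim lies, namely in the ``direct computation'' for $(4)\Rightarrow(2)$ and $(4)\Rightarrow(1)$. For $R\cong\Bbb Z_{2}\times\Bbb Z_{4}$ the non-trivial ideals are exactly the four you wrote down, $\Bbb Z_{2}\times(0)$, $(0)\times\Bbb Z_{4}$, $(0)\times 2\Bbb Z_{4}$, $\Bbb Z_{2}\times 2\Bbb Z_{4}$ (you list four and then assert the count is five), so neither $\Bbb A\Bbb G(R)$ nor $A_{I}(R)$ can be $K_{2,3}$, a graph on five vertices. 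Checking the products, $\Bbb A\Bbb G(R)$ is the path $(0)\times\Bbb Z_{4}$ --- $\Bbb Z_{2}\times(0)$ --- $(0)\times 2\Bbb Z_{4}$ --- $\Bbb Z_{2}\times 2\Bbb Z_{4}$, i.e.\ $P_{4}$, whose girth is $\infty$, so $(4)\Rightarrow(1)$ already fails; and by the paper's Example \ref{exam} (take $F=\Bbb Z_{2}$, $S=\Bbb Z_{4}$) one has $A_{I}(R)\cong C_{4}\neq\Bbb A\Bbb G(R)$, so $(2)$ fails as well. The same computation applies verbatim to $\Bbb Z_{2}\times\Bbb Z_{2}[X]/(X^{2})$. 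The implication $(1)\Rightarrow(4)$ that you flag as ``the main obstacle'' is equally unprovable: a non-reduced ring such as $\Bbb Z\times\Bbb Z_{4}$ has $\mathrm{gr}(\Bbb A\Bbb G(R))=4$ (it contains the $4$-cycle $2\Bbb Z\times(0)$, $(0)\times\Bbb Z_{4}$, $3\Bbb Z\times(0)$, $(0)\times 2\Bbb Z_{4}$ and no triangle, since any two ideals with non-zero first components have non-zero product), yet it is not isomorphic to either ring in $(4)$.

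So the gap is not a missing lemma but the statement itself; no amount of care in steps (a)--(c) of your outline can close it. The paper's replacement, Theorem \ref{grith}, records the correct picture for non-reduced rings: $\mathrm{gr}(A_{I}(R))=4$ if and only if $R\cong F\times S$ with $F$ a field and $S$ having a unique non-trivial ideal, if and only if $\Bbb A\Bbb G(R)\cong P_{4}$, if and only if $A_{I}(R)\cong C_{4}$; in particular for such rings $A_{I}(R)\neq\Bbb A\Bbb G(R)$ and $K_{2,3}$ never occurs. If you want a true statement involving $\mathrm{gr}(\Bbb A\Bbb G(R))=4$, the relevant classification (used in the paper's proof of Theorem \ref{grith}) is that of \cite{A3}, which yields $R\cong D\times S$ with $D$ an integral domain that is \emph{not} a field --- not the two finite rings listed in $(4)$.
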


\noindent In the proof of the previous Theorem, the authours claim that
$A_{I} (R) = \Bbb A \Bbb G (R) = K_{2,3}$ is equivalent
to either $R=\Bbb Z _{2} \times \Bbb Z_{4}$ or $R=\Bbb Z _{2} \times \Bbb Z_{2} [X] /(X^{2})$.
The claim is not true because it is clearly both
$R=\Bbb Z _{2} \times \Bbb Z_{4}$ and $R=\Bbb Z _{2} \times \Bbb Z_{2} [X] /(X^{2})$
have four  non-trivial annihilating-ideal and by Example \ref{exam},
$A_{I}(R) \cong C_{4}$.

\noindent Now we provide a correct condition for this Theorem and  its  proof.

\noindent  annihilator-ideal graphs of non-reduced rings of girth 4 are identified in the following result.

\begin{thm}\label{grith}
Let $R$ be a non-reduced ring. Then the following statements
are equivalent:

\noindent {\rm(1)} $\mathrm{gr}(A_{I} (R))=4$;

\noindent {\rm(2)} $A_{I} (R) \neq \Bbb A \Bbb G (R)$
and $\mathrm{gr}(A_{I} (R))=4$;

\noindent {\rm(3)} $R \cong F\times S$,
where $F$ is a field and $S$ is a ring with a unique
non-trivial ideal;

\noindent {\rm (4)} $\Bbb A \Bbb G (R) \cong P_{4}$;

\noindent {\rm (5)} $A_{I}(R) \cong C_{4}$.
\end{thm}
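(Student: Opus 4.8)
\noindent The plan is to establish the cycle of implications $(1)\Rightarrow(2)\Rightarrow(3)\Rightarrow(4)\Rightarrow(5)\Rightarrow(1)$. Of these, $(2)\Rightarrow(3)$ is nothing but the equivalence $(1)\Leftrightarrow(4)$ of Theorem \ref{girt}; $(3)\Rightarrow(4)$ (and, incidentally, $(3)\Rightarrow(5)$) follows from the computation in Example \ref{exam}, where it is checked that $R\cong F\times S$ with $F$ a field and $S$ a ring with a unique non-trivial ideal has $\Bbb A\Bbb G(R)\cong P_{4}$ and $A_{I}(R)\cong C_{4}$; and $(5)\Rightarrow(1)$ is trivial since $\mathrm{gr}(C_{4})=4$. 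So the real work lies in $(1)\Rightarrow(2)$ and $(4)\Rightarrow(5)$.

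\noindent For $(1)\Rightarrow(2)$, since $\mathrm{gr}(A_{I}(R))=4$ is already part of (2), it is enough to show $A_{I}(R)\neq\Bbb A\Bbb G(R)$. I would argue by contradiction: if $A_{I}(R)=\Bbb A\Bbb G(R)$, then $\mathrm{gr}(\Bbb A\Bbb G(R))=4$, so by Theorem \ref{salehi} the ring $R$ is ring-isomorphic to $\Bbb Z_{2}\times\Bbb Z_{4}$ or $\Bbb Z_{2}\times\Bbb Z_{2}[X]/(X^{2})$. Both are of the form $F\times S$ with $S$ a ring having a unique non-trivial ideal, so by Example \ref{exam} we get $A_{I}(R)\cong C_{4}$ and $\Bbb A\Bbb G(R)\cong P_{4}$, contradicting $A_{I}(R)=\Bbb A\Bbb G(R)$. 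A self-contained alternative avoiding Theorem \ref{salehi}: pick a $4$-cycle $I_{1}-\hspace{-.2cm}-I_{2}-\hspace{-.2cm}-I_{3}-\hspace{-.2cm}-I_{4}-\hspace{-.2cm}-I_{1}$ in the triangle-free graph $A_{I}(R)=\Bbb A\Bbb G(R)$; then $L:=I_{1}I_{3}\neq(0)$ while $LI_{2}=LI_{4}=(0)$, and examining the possibilities $L\in\{I_{1},I_{2},I_{3},I_{4}\}$ together with the facts that $R$ is non-reduced, part (1) of Lemma \ref{lem}, and Theorem \ref{th2}, one is driven to $LI_{1}=(0)$ or $LI_{3}=(0)$, which produces a triangle.

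\noindent For $(4)\Rightarrow(5)$, write $\Bbb A\Bbb G(R)\cong P_{4}$ as $I_{1}-\hspace{-.2cm}-I_{2}-\hspace{-.2cm}-I_{3}-\hspace{-.2cm}-I_{4}$, so $I_{1}I_{2}=I_{2}I_{3}=I_{3}I_{4}=(0)$ and $I_{1}I_{3}$, $I_{1}I_{4}$, $I_{2}I_{4}$ are all nonzero. Since $\mathrm{d}_{\Bbb A\Bbb G(R)}(I_{1},I_{4})=3$, part (5) of Lemma \ref{lem} gives that $I_{1}-\hspace{-.2cm}-I_{4}$ is an edge of $A_{I}(R)$, so $A_{I}(R)$ already contains the $4$-cycle $I_{1}-\hspace{-.2cm}-I_{2}-\hspace{-.2cm}-I_{3}-\hspace{-.2cm}-I_{4}-\hspace{-.2cm}-I_{1}$; it remains to show that the two potential chords $I_{1}-\hspace{-.2cm}-I_{3}$ and $I_{2}-\hspace{-.2cm}-I_{4}$ are \emph{not} edges of $A_{I}(R)$. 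I would first recover the ring structure: $\mathrm{Ann}_{R}(I_{1}+I_{3})\supseteq I_{2}\neq(0)$ makes $I_{1}+I_{3}$ a vertex, hence one of the $I_{j}$, and the inequalities $I_{1}I_{3}\neq(0)$, $I_{2}I_{4}\neq(0)$, $I_{1}I_{4}\neq(0)$ rule out all cases except $I_{3}\subseteq I_{1}$ (and symmetrically $I_{2}\subseteq I_{4}$); one then computes $\mathrm{Ann}_{R}(I_{1})=I_{2}$, $\mathrm{Ann}_{R}(I_{2})=I_{1}$, $\mathrm{Ann}_{R}(I_{3})=I_{4}$, $\mathrm{Ann}_{R}(I_{4})=I_{3}$ and $I_{1}^{2}\neq(0)$. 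As in the proof of Theorem \ref{girt}, Brauer's Lemma then splits $R\cong R_{1}\times R_{2}$, and tracking $I_{1},I_{2},I_{3},I_{4}$ through the product—just as in the step $(3)\Rightarrow(4)$ of that proof—shows that $R_{1}$ is a field and $R_{2}$ is a ring with a unique non-trivial ideal; thus (3) holds, and Example \ref{exam} gives $A_{I}(R)\cong C_{4}$, which is (5).

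\noindent The step I expect to be the main obstacle is precisely this recovery of the product decomposition from the shape of $\Bbb A\Bbb G(R)$ (equivalently, the self-contained version of $(1)\Rightarrow(2)$): the available tools are only the annihilator bookkeeping of Lemma \ref{lem}, the clique of nilpotent ideals of Theorem \ref{th2}, and the Brauer-splitting trick, and carrying it out requires a somewhat delicate case analysis of the inclusions among the four vertices. Once this is done, everything else reduces formally to Theorem \ref{girt} and Example \ref{exam}.
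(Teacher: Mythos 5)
Your skeleton (the cycle $(1)\Rightarrow(2)\Rightarrow(3)\Rightarrow(4)\Rightarrow(5)\Rightarrow(1)$, with $(2)\Rightarrow(3)$ from Theorem \ref{girt}) matches the paper, but your main route for $(1)\Rightarrow(2)$ rests on Theorem \ref{salehi}, and that is precisely the defective result from \cite{sal} that Theorem \ref{grith} is written to correct: immediately after stating it, the paper points out that for $\Bbb Z_{2}\times\Bbb Z_{4}$ and $\Bbb Z_{2}\times\Bbb Z_{2}[X]/(X^{2})$ one has only four annihilating ideals, $\Bbb A\Bbb G(R)\cong P_{4}$ and $A_{I}(R)\cong C_{4}$, so the asserted equivalences fail. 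The implication you actually invoke ($\mathrm{gr}(\Bbb A\Bbb G(R))=4$ forces $R$ to be one of those two rings) is itself false: $R=\Bbb Z\times\Bbb Z_{4}$ is non-reduced with $\mathrm{gr}(\Bbb A\Bbb G(R))=4$ and is neither of them. The correct input, and the one the paper uses, is \cite[Theorem 3.5]{A3}: a non-reduced ring with $\mathrm{gr}(\Bbb A\Bbb G(R))=4$ is $D\times S$ with $D$ an integral domain that is not a field and $S$ a ring with a unique non-trivial ideal $I$; then $(0,S)$ and $(D,I)$ are adjacent in $A_{I}(R)$ but not in $\Bbb A\Bbb G(R)$, contradicting $A_{I}(R)=\Bbb A\Bbb G(R)$. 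Your ``self-contained alternative'' does not close this hole: the assertion that the case analysis ``drives'' you to $LI_{1}=(0)$ or $LI_{3}=(0)$ is exactly the hard structural point, and Lemma \ref{lem}(1) plus Theorem \ref{th2} do not obviously yield it; filling it in would amount to reproving a good part of the cited classification.

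In $(4)\Rightarrow(5)$ your derivation of $I_{3}\subseteq I_{1}$ and $I_{2}\subseteq I_{4}$ is sound (the paper obtains the same containments from the products $I_{1}I_{3}$ and $I_{2}I_{4}$ rather than the sums), but the continuation through $\mathrm{Ann}$-computations and Brauer's Lemma is both unnecessary and, as sketched, gappy: Brauer's Lemma needs a \emph{minimal} ideal whose square is non-zero, and $I_{1}$ (which in the model $F\times S$ is $0\times S$) is not minimal; the idempotent candidate is the minimal ideal $I_{2}$, and you have not shown $I_{2}^{2}\neq(0)$, so the splitting $R\cong R_{1}\times R_{2}$ is not yet available. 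The paper's route is much shorter and already within your reach: the same four-vertex case analysis gives $I_{1}I_{3}=I_{3}$ and $I_{2}I_{4}=I_{2}$, so $\mathrm{Ann}_{R}(I_{1}I_{3})=\mathrm{Ann}_{R}(I_{3})$ and $\mathrm{Ann}_{R}(I_{2}I_{4})=\mathrm{Ann}_{R}(I_{2})$, whence by Lemma \ref{lem}(1) (equivalently, by minimality of $I_{2},I_{3}$ and Lemma \ref{mini}) neither chord is an edge of $A_{I}(R)$, while Lemma \ref{lem}(5) supplies the edge between $I_{1}$ and $I_{4}$; hence $A_{I}(R)\cong C_{4}$ with no ring decomposition and no appeal to Example \ref{exam}.
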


\begin{proof}
{ $(1) \Rightarrow (2)$ Assume to the contrary, $A_{I} (R) = \Bbb A \Bbb G (R)$.
Then $\mathrm{gr}(\Bbb A \Bbb G (R))=4$, and $R \cong D \times S$ where $S$ is
a ring with a unique non-trivial ideal and $D$ is an integral domain
which is not a field by \cite[Theorem 3.5]{A3}.
Suppose that $I$ is the unique non-trivial ideal of $S$.
Then $(0,S)$ and $(D,I)$ are two distinct elements of $\Bbb A(R)^{\ast}$
such that $(0,S)(D,I) \neq (0,0)$.
It is not hard to see that $\mathrm{Ann}_{R}((0,S)(D,I)) \neq \mathrm{Ann}_{R}(0,S) \cup \mathrm{Ann}_{R}(D,I)$.
So $(0,S)-\hspace{-.2cm}-(D,I)$ is an edge of $A_{I} (R)$ that is
not an edge of $\Bbb A \Bbb G(R)$, a contradiction.
Thus $A_{I} (R) \neq \Bbb A \Bbb G (R)$.

\noindent {\rm$(2) \Rightarrow (3)$} It is clear by Theorem \ref{girt}.

\noindent {\rm $(3) \Rightarrow (4)$}  By \cite[Theorem 2]{A2}, the proof is clear.

\noindent {\rm $(4) \Rightarrow (5)$} Suppose that $\Bbb A \Bbb G (R) \cong P_{4}$,
and $\Bbb A \Bbb G (R)$ is the path $I-\hspace{-.2cm}-K-\hspace{-.2cm}-L-\hspace{-.2cm}-J$.
Clearly $IL$ and $KJ$ are two vertices of $\Bbb A \Bbb G (R)$.
Since $ILJ =ILK =KJL =KJI = (0)$, we have
$KJ =K$ and $IL = L$, so $L \subsetneqq I$ and $K \subsetneqq J$.
Clearly, $K \nsubseteq J$ and $J \nsubseteq K$, and hence
$R$ has exactly two minimal ideals $K$ and $L$.
By  Lemma \ref{mini},
$N_{\Bbb A \Bbb G(R)} (L) = N _{\Gamma ^{\prime} _{\mathrm{Ann}}(R)} (L)$ and
$N_{\Bbb A \Bbb G(R)} (K) = N _{\Gamma ^{\prime} _{\mathrm{Ann}}(R)} (K)$.
Since $\mathrm{d} _{\Bbb A \Bbb G (R)} (I, J) =3$,
 $I-\hspace{-.2cm}-J$ is an edge of $A_{I} (R)$
by part 5 of Lemma \ref{lem}, i.e., $A_{I} (R) \cong C _{4}$.

\noindent {\rm$(5) \Rightarrow (1)$} It is clear.
}
\end{proof}

\noindent The following result shows that if $\Bbb A \Bbb G (R)$ is a star graph
and $A_{I} (R) \neq \Bbb A \Bbb G (R)$,
then $A_{I} (R) $ is a complete graph.

\begin{thm}\label{thm8}
Let $R$ be a ring and $A_{I} (R) \neq \Bbb A \Bbb G (R)$.
Suppose that $\Bbb A \Bbb G (R)$ is a star graph.
Then the following statements hold:

\noindent {\rm(1)} $R$ is indecomposable.

\noindent {\rm(2)} $A_{I} (R) $ is a complete graph.
\end{thm}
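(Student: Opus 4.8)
The plan is to read off the shape of the star first and then treat the two assertions in turn. Since $\Bbb A \Bbb G (R)$ and $A_{I}(R)$ have the same vertex set, the former is a spanning subgraph of the latter, and the two differ, we must have $|\Bbb A(R)^{\ast}| \geq 3$; hence $\Bbb A \Bbb G (R) \cong K_{1,n}$ with $n \geq 2$. Write $c$ for the centre and $v_{1},v_{2},\dots$ for the leaves, so that $cv_{i}=(0)$ for every $i$ while $v_{i}v_{j}\neq(0)$ whenever $i\neq j$. By Lemma \ref{lem}(2) every edge $c-v_{i}$ is also an edge of $A_{I}(R)$, so the only edges that can be new in $A_{I}(R)$ join two leaves; as $A_{I}(R)\neq\Bbb A \Bbb G (R)$, at least one leaf-leaf edge, say $v_{1}-v_{2}$, is present in $A_{I}(R)$.

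For (1), suppose $R\cong R_{1}\times R_{2}$ with both factors non-zero. Then $(R_{1},0)$ and $(0,R_{2})$ are distinct vertices whose product is $(0,0)$, hence they are adjacent; since a star has no edge between two leaves, one of them — relabelling, $(R_{1},0)$ — is the centre $c$. Every other vertex $W$ is then a leaf, hence adjacent to $c$, so $W\cdot(R_{1},0)=(0,0)$, which forces the first coordinate of $W$ to be $(0)$. Thus no ideal $(L_{1},0)$ with $(0)\subsetneq L_{1}\subsetneq R_{1}$ is a vertex, so $R_{1}$ has no non-trivial ideal and is a field $F$; and if $R_{2}$ had a non-zero zero-divisor $z$, then $(F,R_{2}z)$ would be a vertex different from $c$ with non-zero first coordinate — impossible — so $R_{2}$ is an integral domain. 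But then $R\cong F\times R_{2}$ is reduced with $|\mathrm{Min}(R)|=2$, whence $A_{I}(R)=\Bbb A \Bbb G (R)$ by Theorem \ref{indentical}, a contradiction; so $R$ is indecomposable.

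For (2), I would first note that $R$ is non-reduced: were it reduced, the star hypothesis would force $A_{I}(R)=\Bbb A \Bbb G (R)$ (by Theorem \ref{st123} together with Theorem \ref{indentical}), contrary to assumption. Next, a star has diameter $\leq 2$, so $Z(R)$ must be an ideal — otherwise $\mathrm{diam}(\Bbb A \Bbb G (R))=3$, exactly as in the proof of Theorem \ref{t1} — hence $Z(R)$ is prime and contains every vertex of $\Bbb A \Bbb G (R)$. The goal is then to show that \emph{every vertex of $A_{I}(R)$ is a nilpotent ideal}; once this is done, the induced subgraph of $A_{I}(R)$ on nilpotent ideals is all of $A_{I}(R)$, which is complete by Theorem \ref{th2}, finishing the proof.

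To prove that claim I would proceed through the following steps. First, $c^{2}=(0)$: if not, $c^{2}$ is a vertex contained in $c$; if $c^{2}\subsetneq c$ it is a leaf adjacent both to $c$ and to every $v_{i}$ (since $c^{2}v_{i}\subseteq cv_{i}=(0)$), contradicting that a leaf has degree one, so $c^{2}=c$; but then $c$ is an idempotent ideal, and since a minimal idempotent ideal has an idempotent generator (Brauer's Lemma), either $c$ is minimal and $R$ decomposes — contradicting (1) — or $c$ is not minimal and a proper non-zero principal sub-ideal of $c$ is a vertex adjacent to every leaf but not to $c$, again impossible; so $c^{2}=(0)$. Second, for each leaf $v_{i}$ the ideal $\mathrm{Ann}(v_{i})$ is a vertex with $\mathrm{Ann}(v_{i})\,v_{i}=(0)$, hence it equals $c$ or $v_{i}$; in the case $\mathrm{Ann}(v_{i})=v_{i}$ one gets $v_{i}^{2}=(0)$, so $v_{i}$ is nilpotent. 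Third, in the remaining case $\mathrm{Ann}(v_{i})=c$: here $\sum_{j}v_{j}$ is a vertex adjacent to $c$, hence a leaf $v_{k}$ with $v_{j}\subseteq v_{k}$ for all $j$, and the descending chain $v_{i}\supseteq v_{i}^{2}\supseteq\cdots$ cannot run forever, for otherwise each $v_{i}^{m}$ is a vertex adjacent to $c$ and the rigidity of the star (with $c^{2}=(0)$ and indecomposability) forces some $v_{i}^{m}$ to equal $c$, whence $v_{i}^{m+1}\subseteq cv_{i}=(0)$, a contradiction — so $v_{i}^{k}=(0)$ for some $k$ and $v_{i}$ is nilpotent. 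The hard part will be this last step: controlling the powers of a leaf and, along the way, ruling out idempotent leaves without any finiteness hypothesis, which is where the combinatorial rigidity of the star, Brauer's Lemma, and the indecomposability established in part (1) must all be used together; everything else — part (1), the first two steps above, and the final appeal to Theorem \ref{th2} — is routine.
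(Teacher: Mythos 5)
Part (1) of your proposal is correct, and it is in fact a nice self-contained alternative to the paper's treatment (the paper disposes of the decomposable case by quoting results on star/acyclic annihilating-ideal graphs, whereas you argue directly that a decomposition forces $R\cong F\times D$ and then invoke Theorem \ref{indentical}); your reduction of part (2) to the claim that every vertex is nilpotent would also suffice, by Theorem \ref{th2}, and your first two steps ($c^{2}=(0)$ via Brauer's Lemma, and $\mathrm{Ann}_{R}(v_{i})\in\{c,v_{i}\}$) are fine. The genuine gap is exactly the step you yourself flag as ``the hard part'': nilpotency of a leaf $v_{i}$ with $\mathrm{Ann}_{R}(v_{i})=c$. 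The assertion that the chain $v_{i}\supseteq v_{i}^{2}\supseteq\cdots$ must reach $c$ ``by the rigidity of the star'' is unjustified, and no argument of the kind you sketch can work, because your step never uses the hypothesis $A_{I}(R)\neq \Bbb A\Bbb G(R)$ (the leaf--leaf edge $v_{1},v_{2}$ you set up at the start is never used). Indeed, every ingredient you do use --- $\Bbb A\Bbb G(R)$ a star, $R$ non-reduced and indecomposable, $c^{2}=(0)$, $\mathrm{Ann}_{R}(\mathrm{leaf})\in\{c,\mathrm{leaf}\}$ --- holds for $R=\Bbb Z_{2}[X,Y]/(X^{2},XY)$: there $\Bbb A\Bbb G(R)\cong K_{1,\infty}$ with centre $c=\mathrm{Nil}(R)=(x)R$, $c^{2}=(0)$, and the leaf $(y)R$ satisfies $\mathrm{Ann}_{R}((y)R)=c$ while no power of $(y)R$ is zero; each power $(y^{m})R$ is a vertex adjacent to $c$ and none of them equals $c$, so the descending chain does run forever. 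That ring is not a counterexample to the theorem only because there $A_{I}(R)=\Bbb A\Bbb G(R)$ (Theorem \ref{infinity}); it is, however, a counterexample to your step 3 as written.

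What makes the nilpotency claim true, and what your argument must exploit, is the extra edge itself. If $J,K$ are leaves joined in $A_{I}(R)$ but not in $\Bbb A\Bbb G(R)$, then $\mathrm{Ann}_{R}(JK)\neq \mathrm{Ann}_{R}(J)\cup\mathrm{Ann}_{R}(K)=c$; since $JK$ is a non-zero vertex and every vertex other than $c$ has annihilator $c$ (or equal to itself, in which case it is already square-zero), this forces $JK=c$. Then $JK\cdot K=cK=(0)$ together with the minimality of $c$ gives $K^{2}=c$, and similarly $J^{2}=c$; propagating, for arbitrary leaves $H,H'$ one gets $JH=c$, then $H^{2}=c$, then $HH'=c$, and finally $\mathrm{Ann}_{R}(HH')=\mathrm{Ann}_{R}(c)\supsetneq c=\mathrm{Ann}_{R}(H)\cup\mathrm{Ann}_{R}(H')$, so all leaves are pairwise adjacent in $A_{I}(R)$. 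This is the paper's route; as a by-product it shows $H^{2}=c$ and $c^{2}=(0)$ for every leaf $H$, so your target claim (all vertices nilpotent) is true, but its proof has to pass through the new edge rather than through a chain or finiteness argument, and once one has $HH'=c$ the appeal to Theorem \ref{th2} becomes unnecessary.
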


\begin{proof}
{
\noindent {\rm(1)} First we note that $R$ is a non-reduced ring.
If $R$ is reduced, then  \cite[Corollary 26]{A1} implies that $R \cong F \times D$, where $F$ is a field and $D$ is an integral domain. By Theorem \ref{st123},
 $A_{I} (R) = \Bbb A \Bbb G (R)$,
a contradiction. Suppose that $R\cong R_{1} \times R_{2}$, where $R_{i}$ is a
ring, for $1\leq i \leq 2$. Since $\Bbb A \Bbb G (R)$ has no cycle,
we deduce that $R \cong F \times S$, where $F$ is a field and $S$ is a ring with a unique
non-trivial ideal by \cite[Lemma 1]{A2}. Thus $\Bbb A \Bbb G (R) \cong P_{4}$ and this contradicts the assumption.

\noindent {\rm(2)} Suppose that $I$ is the center of $\Bbb A \Bbb G (R)$.
It is not hard to check that $\mathrm{Ann}(J)=I$, for every  $J(\neq I)\in \Bbb A(R)^{\ast}$. Assume that $J-\hspace{-.2cm}-K$
 is an edge of $A_{I} (R)$ that is not an edge of $\Bbb A \Bbb G (R)$, for some  $J,K \in \Bbb A(R)^{\ast} \setminus \lbrace I \rbrace$.
If $\vert \Bbb A(R)^{\ast} \vert =3$, then there is nothing to prove.
So let $\vert \Bbb A(R)^{\ast} \vert >3$.
Obviously, $\mathrm{Ann}(JK)\neq \mathrm{Ann}(J) \cup \mathrm{Ann}(K)$ and $\mathrm{Ann}(J)=\mathrm{Ann}(K)=I$. Hence $\mathrm{Ann}(JK)\neq I$ and so $JK=I$.
The equalities $JKK=JK^{2}=(0)$ imply that $K^{2} =I$.
Since $JKJ=J^{2}K=(0)$,  $J^{2}=I$. Let  $H, H^{\prime} \neq I$
be two arbitrary annihilating ideals. We prove that the following claims:

\textbf{Claim 1.} The equality  $JH=I$ holds. Assume to the contrary, $JH\neq I$.
Since $JJH=J^{2}H$ and $J^{2}=I$, we deduce that $JJH=(0)$ and hence $JH=I$, a contradiction.

\textbf{Claim 2.} The equality $HH^{\prime}=I$ holds.  For if not, $HH^{\prime} \neq I$.
Since $JH^{2}=IH=(0)$, we deduced  that $JH^{2}=(0)$ and so
 $H^{2}=I$.
 Thus $H^{2}H^{\prime}=IH^{\prime}=(0)$ and hence $HH^{\prime} =I$,
 a contradiction.

 Now, it is easy to see that $\mathrm{Ann}(HH^{\prime}) \neq \mathrm{Ann}(H) \cup \mathrm{Ann}(H^{\prime})$.
 Therefore $H-\hspace{-.2cm}-H^{\prime}$ is an edge of $A_{I} (R)$.
This completes the proof.
}
\end{proof}

%


\begin{thm}\label{Artinian}
Let $R$ be an Artinian ring and $\Bbb A \Bbb G (R)$ be a star graph.
Then $A_{I} (R)$ is a complete graph.
\end{thm}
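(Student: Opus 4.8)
The plan is to split according to whether $R$ decomposes as a non-trivial direct product, using that an Artinian ring is a finite product of Artinian local rings. A useful preliminary remark: if $A_{I}(R)\neq\Bbb A\Bbb G(R)$ then Theorem \ref{thm8} already delivers the conclusion — part (2) gives that $A_{I}(R)$ is complete and part (1) gives that $R$ is indecomposable — so the only situation requiring a new argument is $A_{I}(R)=\Bbb A\Bbb G(R)$, in which case $A_{I}(R)$ is itself a star and the goal reduces to showing this star is complete, i.e.\ that $\vert\Bbb A(R)^{\ast}\vert\le 2$. Rather than keep track of that case distinction, I would prove the following two clean statements, which together cover all $R$.

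First, the indecomposable case, where the hypotheses on the graphs are not even needed. If $R$ is indecomposable it is Artinian local, and since $R$ is not a domain it is not a field, so its maximal ideal $\mathfrak m$ is a non-zero nilpotent ideal; every vertex of $A_{I}(R)$ is a non-zero proper ideal, hence contained in $\mathfrak m$, hence nilpotent. By Theorem \ref{th2} the induced subgraph of $A_{I}(R)$ on the nilpotent ideals is complete, and here that subgraph is all of $A_{I}(R)$; so $A_{I}(R)$ is complete.

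Second, the decomposable case, where the star hypothesis does the work: I claim it forces $R\cong F_{1}\times F_{2}$ with $F_{1},F_{2}$ fields, after which $\Bbb A(R)^{\ast}=\{F_{1}\times(0),\,(0)\times F_{2}\}$, these two ideals are minimal with zero product, and $A_{I}(R)\cong K_{2}$ is complete. To prove the claim write $R\cong R_{1}\times\cdots\times R_{n}$ with $n\ge 2$, the $R_{i}$ Artinian local, and let $e_{i}$ be the idempotent that is $1$ in the $i$th coordinate and $0$ elsewhere. If $n\ge 3$ then $e_{1}R,e_{2}R,e_{3}R$ are three distinct non-zero proper ideals with pairwise zero products, a triangle in $\Bbb A\Bbb G(R)$, contradicting that a star graph is acyclic; so $n=2$. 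If both $R_{1},R_{2}$ fail to be fields, then with $\mathfrak m_{i}\subsetneq R_{i}$ their maximal ideals the ideals $R_{1}\times(0),\ \mathfrak m_{1}\times(0),\ (0)\times R_{2},\ (0)\times\mathfrak m_{2}$ form a $4$-cycle in $\Bbb A\Bbb G(R)$, again impossible. Finally, if say $R_{2}$ is a field but $R_{1}$ is not, then $R$ is non-reduced and decomposable with $\Bbb A\Bbb G(R)$ acyclic, so by \cite[Lemma 1]{A2} — exactly the step used in the proof of Theorem \ref{thm8} — we get $R\cong F\times S$ with $S$ a ring having a unique non-trivial ideal, whence $\Bbb A\Bbb G(R)\cong P_{4}$ (cf.\ Theorems \ref{girt} and \ref{grith}); but $P_{4}$ is acyclic and is \emph{not} a star, a contradiction. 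Hence both factors are fields, as claimed.

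The main obstacle is exactly this last step of the decomposable case: one must use that $\Bbb A\Bbb G(R)$ is a star rather than merely acyclic, since the ring $F\times S$ realises the acyclic-but-non-star path $P_{4}$ and so is not excluded by acyclicity alone. Everything else — the triangle for $n\ge 3$, the $4$-cycle when two factors are non-fields, and the nilpotency argument in the local case — is routine; the degenerate possibility $\vert\Bbb A(R)^{\ast}\vert\le 1$, where $A_{I}(R)$ is a single vertex and trivially complete, should just be noted in passing.
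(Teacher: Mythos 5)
Your proof is correct, but it takes a genuinely different route from the paper. The paper's proof rests entirely on the classification of Artinian rings whose annihilating-ideal graph is a star, \cite[Theorem 2.6]{beh1}: either $R\cong F_{1}\times F_{2}$ or $R$ is local with one of three explicitly described ideal structures, and completeness of $A_{I}(R)$ is then checked case by case (yielding $K_{2}$, a one-vertex graph or $K_{2}$, a complete graph, and $K_{3}$, respectively). You avoid that classification: writing the Artinian ring as a finite product of local rings, you handle the decomposable case by elementary cycle arguments (a triangle when there are at least three factors, a $4$-cycle when two factors are non-fields, and the $P_{4}$ obstruction via \cite[Lemma 1]{A2} when exactly one factor is a field), which forces $R\cong F_{1}\times F_{2}$; in the indecomposable case $R$ is Artinian local and not a field, so every vertex of $A_{I}(R)$ is a nilpotent ideal and Theorem \ref{th2} makes the whole graph complete. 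Notably, your local case does not use the star hypothesis at all, so you actually prove the stronger fact that $A_{I}(R)$ is complete for every Artinian local ring that is not a field, something the paper only gets implicitly through the cases of \cite[Theorem 2.6]{beh1}. The trade-off is that you import the structure theorem for Artinian rings together with \cite[Lemma 1]{A2} (the same step used in the proof of Theorem \ref{thm8}), where the paper imports \cite[Theorem 2.6]{beh1}; both are sound, and you correctly identify the one delicate point, namely that acyclicity alone would not exclude the ring $F\times S$ with $\Bbb A\Bbb G(R)\cong P_{4}$, which is exactly where the star hypothesis is needed.
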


\begin{proof}
{ By \cite[Theorem 2.6]{beh1}, either $R \cong F_{1} \times F_{2}$,
 where $F_{1}$ and $F_{2}$ are fields, or $R$ is a local ring with non-zero
 maximal ideal $\mathfrak{m}$ and one of the following cases holds:

\noindent {\rm(i)} $\mathfrak{m}^{2}=(0)$ and $\mathfrak{m}$ is the only non-zero proper ideal of $R$.

\noindent {\rm(ii)} $\mathfrak{m}^{3}=(0)$, $\mathfrak{m}^{2}$ is the only minimal ideal of $R$ and
for every distinct proper ideals $I_{1},I_{2}$ of $R$ such that $\mathfrak{m}^{2} \neq I_{i}$ ($i=1,2$), $I_{1}I_{2}=\mathfrak{m}^{2}$.

\noindent {\rm(iii)} $\mathfrak{m}^{4}=(0)$, $\mathfrak{m}^{3} \neq (0)$ and
$\Bbb A(R)^{\ast} = \lbrace \mathfrak{m}, \mathfrak{m}^{2},\mathfrak{m}^{3} \rbrace$.

\noindent If $R \cong F_{1} \times F_{2}$, then $\Bbb A \Bbb G (R) =A_{I} (R)=K_{2}$.
 Suppose that $R$ is a local ring with non-zero maximal ideal $\mathfrak{m}$.
 If case (i) holds, then there is nothing to prove.
If case (ii) holds, then for every pair of distinct proper ideals $I_{1}, I_{2}$such that
$\mathfrak{m}^{2} \neq I_{i}$ ($i=1,2$), we have $I_{1}I_{2}=\mathfrak{m}^{2}$ and so
$\mathrm{Ann}_{R} (I_{1}I_{2}) \neq \mathrm{Ann}_{R} (I_{1}) \cup \mathrm{Ann}_{R} (I_{2})$.
Thus $A_{I} (R)$ is a complete graph.
If case (iii) holds, then $\Bbb A(R)^{\ast} = \lbrace \mathfrak{m}, \mathfrak{m}^{2},\mathfrak{m}^{3} \rbrace$
and $\Bbb A \Bbb G (R)$ is the path $\mathfrak{m}-\hspace{-.2cm}-\mathfrak{m}^{3}-\hspace{-.2cm}-\mathfrak{m}^{2}$.
Obviously, $\mathrm{Ann}_{R} (\mathfrak{m}^{3}) \neq \mathrm{Ann}_{R} (\mathfrak{m}) \cup \mathrm{Ann}_{R}(\mathfrak{m}^{2})$ and so $A_{I} (R) \cong K_{3}$.~}
\end{proof}

\begin{remark}\label{rema123}
Let $\Bbb A \Bbb G (R)$ be a star graph and $I$ be the center of
$\Bbb A \Bbb G (R)$. Then it is easily seen that $I$ is a minimal ideal of $R$. Also, it is worthy to mention that
if $R$ is a non-reduced ring,  then $I^{2}=(0)$ (As $R$ is indecomposable, by part 1 of Theorem \ref{thm8}).
\end{remark}

\noindent Finally, we may characterize star annihilator-ideal graphs  of non-reduced rings.

\begin{thm} \label{infinity}
Let $R$ be a non-reduced ring with $\vert \Bbb A(R)^{\ast} \vert \geq 2$.
Then the following statements are equivalent:

\noindent {\rm(1)} $A_{I} (R)$ is a star graph;

\noindent {\rm(2)} $ \mathrm{gr} (A_{I} (R)) = \infty $;

\noindent {\rm(3)} $\Bbb A \Bbb G (R) =A_{I} (R)$ and $\mathrm{gr} (\Bbb A \Bbb G(R)) = \infty $;

\noindent {\rm(4)} $\mathrm{Nil} (R)$ is a prime ideal and either $Z(R) =\mathrm{Nil} (R)$  and
$\vert \Bbb A(R)^{\ast} \vert = 2 $ or
$Z(R) \neq \mathrm{Nil} (R)$ and $\mathrm{Nil} (R)$ is a minimal ideal of $R$;

\noindent {\rm(5)} Either $A_{I} (R) \cong K_{1,1}$ or
$A_{I} (R) \cong K_{1,\infty}$;

\noindent {\rm(6)} Either $\Bbb A \Bbb G (R) \cong K_{1,1}$ or
$\Bbb A \Bbb G (R) \cong K_{1,\infty}$.
\end{thm}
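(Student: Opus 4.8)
The plan is to establish the cyclic chain $(1)\Rightarrow(2)\Rightarrow(3)\Rightarrow(4)\Rightarrow(5)\Rightarrow(1)$ and, since $(4)$ sits on this cycle, to add $(4)\Rightarrow(6)$ and $(6)\Rightarrow(4)$. The purely graph-theoretic steps are short: $(1)\Rightarrow(2)$ and $(5)\Rightarrow(1)$ hold because a star has no cycle, and $(3)\Rightarrow(2)$ is immediate. For $(2)\Rightarrow(3)$ apply Corollary~\ref{cog13}: if $A_{I}(R)\neq\Bbb A\Bbb G(R)$ then $\mathrm{gr}(A_{I}(R))\in\{3,4\}$, so $\mathrm{gr}(A_{I}(R))=\infty$ forces $A_{I}(R)=\Bbb A\Bbb G(R)$ and hence $\mathrm{gr}(\Bbb A\Bbb G(R))=\infty$. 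Note that once $\mathrm{gr}(A_{I}(R))=\infty$, the graph $A_{I}(R)$ --- connected of diameter $\leq 2$ by Theorem~\ref{theo} --- is a tree of diameter $\leq 2$, hence a star $K_{1,n}$ with $n\geq 1$; the content of the theorem is to decide which $n$ occur and the corresponding rings.

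For $(3)\Rightarrow(4)$, suppose $A_{I}(R)=\Bbb A\Bbb G(R)$ is a star with centre $I_{0}$. Since $\Bbb A\Bbb G(R)$ is acyclic, $R$ is indecomposable (a decomposition would force, by \cite[Lemma~1]{A2}, $R\cong F\times S$ with $S$ having a unique non-trivial ideal, hence $\Bbb A\Bbb G(R)\cong P_{4}$ by Theorem~\ref{grith}, not a star), so by Remark~\ref{rema123} $I_{0}$ is a minimal ideal with $I_{0}^{2}=(0)$. As $\mathrm{diam}(\Bbb A\Bbb G(R))\leq 2<3$, Theorem~\ref{t1} gives that $Z(R)$ is an ideal, and such an ideal is automatically prime (if $ab\in Z(R)$, choose $c\neq 0$ with $abc=0$; then $bc\neq 0$ gives $a\in Z(R)$, while $bc=0$ gives $b\in Z(R)$); moreover every $Rz$ with $z\in Z(R)^{\ast}$ is a vertex equal to $I_{0}$ or adjacent to $I_{0}$, so $Z(R)=\mathrm{Ann}_{R}(I_{0})$. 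Now distinguish two cases. If $Z(R)=\mathrm{Nil}(R)$, every vertex lies in $\mathrm{Nil}(R)$ and is therefore nilpotent, so by Theorem~\ref{th2} \emph{all} of $A_{I}(R)$ is a complete graph; a complete star is $K_{1,1}$, so $\vert\Bbb A(R)^{\ast}\vert=2$ and $\mathrm{Nil}(R)=Z(R)$ is prime, giving the first alternative of $(4)$. If $Z(R)\neq\mathrm{Nil}(R)$, Theorem~\ref{th2} and the star shape allow at most two nilpotent ideals; the subcase of two nilpotent ideals $I_{0}\subsetneq N_{1}$ makes $R$ a special principal ideal ring (via Lemma~\ref{non1} and the remark following it) with $Z(R)=\mathrm{Ann}_{R}(I_{0})=\mathrm{Nil}(R)$, a contradiction, so $I_{0}$ is the unique nilpotent ideal and $\mathrm{Nil}(R)=I_{0}$ is a minimal ideal; finally $\mathrm{Nil}(R)$ is prime, because two distinct minimal primes would occur as leaves $P_{1},P_{2}$ with $\mathrm{Ann}_{R}(P_{i})=I_{0}$ and $P_{1}P_{2}=I_{0}$, and then $R/I_{0}$ would be reduced with two minimal primes having zero intersection, which one rules out using the star shape. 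This gives the second alternative of $(4)$.

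For $(4)\Rightarrow(5)$ and $(4)\Rightarrow(6)$: in the first alternative $\vert\Bbb A(R)^{\ast}\vert=2$, so $A_{I}(R)=\Bbb A\Bbb G(R)=K_{2}=K_{1,1}$. In the second alternative put $N=\mathrm{Nil}(R)$; as $N$ is a minimal ideal, $N=Rn$ with $n^{2}=0$, and using that $R/N$ is a domain one checks $Z(R)=\mathrm{Ann}_{R}(N)$. Hence for any vertex $I\neq N$ one has $I\subseteq Z(R)=\mathrm{Ann}_{R}(N)$, so $IN=(0)$ and, since $rI=(0)$ implies $\bar r\,\bar I=(0)$ in the domain $R/N$ with $\bar I\neq(0)$, one gets $\mathrm{Ann}_{R}(I)=N$. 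For two such vertices $I,J$, $\bar I\,\bar J\neq(0)$ forces $IJ\neq(0)$ and even $IJ\not\subseteq N$, so $IJ$ is again a vertex $\neq N$ with $\mathrm{Ann}_{R}(IJ)=N=\mathrm{Ann}_{R}(I)\cup\mathrm{Ann}_{R}(J)$; thus $I-\hspace{-.2cm}-J$ is an edge of neither $\Bbb A\Bbb G(R)$ nor $A_{I}(R)$. So $A_{I}(R)=\Bbb A\Bbb G(R)$ is the star centred at $N$, and its vertices are exactly the nonzero ideals contained in $Z(R)=\mathrm{Ann}_{R}(N)$; there are infinitely many of these, because $R/N$ is a domain that is not a field (as $(0)\subsetneq Z(R)/N\subsetneq R/N$) and any nonzero $\bar a\in Z(R)/N$ gives a strictly descending chain $(\bar a)\supsetneq(\bar a^{2})\supsetneq\cdots$. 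Hence $A_{I}(R)=\Bbb A\Bbb G(R)\cong K_{1,\infty}$, which is $(5)$ and $(6)$. For $(6)\Rightarrow(4)$ one reruns the previous paragraph starting only from ``$\Bbb A\Bbb G(R)$ is a star'' --- indecomposability, $I_{0}$ minimal with $I_{0}^{2}=(0)$, and $Z(R)$ a prime ideal equal to $\mathrm{Ann}_{R}(I_{0})$ all use only $\Bbb A\Bbb G(R)$ --- the one new point being that the case $Z(R)=\mathrm{Nil}(R)$ now forces $\Bbb A\Bbb G(R)\cong K_{1,1}$ (a short module-theoretic argument: then $\mathrm{Nil}(R)^{3}=(0)$, $\mathrm{Nil}(R)^{2}$ is the simple module $I_{0}$, and $\mathrm{Nil}(R)/\mathrm{Nil}(R)^{2}$ embeds into $I_{0}$, so there are only two vertices). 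Then $(6)\Rightarrow(4)\Rightarrow(5)\Rightarrow(1)$ closes everything.

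I expect two load-bearing points. First, the primeness of $\mathrm{Nil}(R)$ in the second alternative of $(4)$ (equivalently, that $R$ has a unique minimal prime): after the structural preliminaries this amounts to ruling out two incomparable minimal primes occurring as leaves adjacent to the centre, which is handled by working inside $R/\mathrm{Nil}(R)$. Second --- the crux behind $(6)\Rightarrow(1)$ --- excluding the a priori possibility ``$\Bbb A\Bbb G(R)\cong K_{1,\infty}$ while $A_{I}(R)$ is complete'', which Theorem~\ref{thm8} leaves open; the resolution is precisely the identity $\mathrm{Ann}_{R}(I)=\mathrm{Nil}(R)$ for every leaf $I$, proved in the domain $R/\mathrm{Nil}(R)$, which forces $\mathrm{Ann}_{R}(IJ)=\mathrm{Ann}_{R}(I)\cup\mathrm{Ann}_{R}(J)$ and thereby collapses $A_{I}(R)$ back onto $\Bbb A\Bbb G(R)$. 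The remaining verifications --- the two-element case and the special principal ideal ring subcase --- are routine given Lemma~\ref{non1}, Theorem~\ref{th2}, Theorem~\ref{grith} and Remark~\ref{rema123}.
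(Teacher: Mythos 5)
Your skeleton matches the paper's ($(1)\Rightarrow(2)\Rightarrow(3)\Rightarrow(4)\Rightarrow(5)$, with the cycle closed through (6)), and your proof of $(4)\Rightarrow(5),(6)$ in the second alternative is correct and in fact cleaner than the paper's: computing $\mathrm{Ann}_R(I)=\mathrm{Nil}(R)$ for every vertex $I\neq\mathrm{Nil}(R)$ inside the domain $R/\mathrm{Nil}(R)$, and producing infinitely many vertices from a strictly descending chain, replaces the paper's appeal to finiteness results of Behboodi--Rakeei. But $(3)\Rightarrow(4)$ has real gaps. First, ``every vertex lies in $\mathrm{Nil}(R)$ and is therefore nilpotent'' is not valid: an ideal contained in $\mathrm{Nil}(R)$ is nil, and Theorem \ref{th2} is proved only for nilpotent ideals (this is patchable by working with the principal ideals $Rx$, $x\in\mathrm{Nil}(R)^{\ast}$, which are nilpotent). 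Second, the two-nilpotent-ideal subcase is dismissed by citing Lemma \ref{non1} and the remark after it, which only give ``$R$ is a principal ideal ring or $(\mathrm{Nil}(R))^{2}=(0)$''; neither branch yields $Z(R)=\mathrm{Nil}(R)$, and this is exactly where the paper spends most of its effort (showing $I\subsetneq J=\mathrm{Nil}(R)$, $J^{2}=I$, and then the $K=I+Rx$ computation that forbids $x\in Z(R)\setminus\mathrm{Nil}(R)$). Third, the primeness of $\mathrm{Nil}(R)$ in the second alternative is only gestured at: you assert $P_{1}P_{2}=I_{0}$ (not justified when there are more than two minimal primes) and end with ``which one rules out using the star shape'' without producing the contradiction; the paper instead argues elementwise that $xy\neq 0$ for $x,y\in Z(R)^{\ast}\setminus\mathrm{Nil}(R)^{\ast}$, since $xy=0$ would create the triangle $\mathrm{Nil}(R)-Rx-Ry-\mathrm{Nil}(R)$.

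The most serious gap is $(6)\Rightarrow(4)$, which is precisely the crux you identify. Your claim that the relevant steps ``use only $\Bbb A\Bbb G(R)$'' fails where it matters: the bound ``at most two nilpotent ideals'' came from Theorem \ref{th2}, a statement about $A_{I}(R)$, combined with $A_{I}(R)$ being triangle-free, and under (6) you do not yet know $A_{I}(R)$ is a star --- excluding ``$A_{I}(R)$ complete while $\Bbb A\Bbb G(R)\cong K_{1,\infty}$'' is the whole difficulty. Moreover the module-theoretic claims you invoke in the case $Z(R)=\mathrm{Nil}(R)$ are false under the hypotheses you state (``$\Bbb A\Bbb G(R)$ a star and $Z(R)=\mathrm{Nil}(R)$''): for $R=\Bbb Z_{16}$, $\Bbb A\Bbb G(R)$ is the star $K_{1,2}$ with centre $(8)$ and $Z(R)=\mathrm{Nil}(R)=(2)$, yet $\mathrm{Nil}(R)^{3}=(8)\neq(0)$ and $\mathrm{Nil}(R)^{2}=(4)\nsubseteq(8)$. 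Those claims only become available after first running the argument of Theorem \ref{thm8}(2) (if $A_{I}(R)\neq\Bbb A\Bbb G(R)$ and $\Bbb A\Bbb G(R)$ is a star with centre $I_{0}$, then every product of two non-central vertices equals $I_{0}$, whence $\mathrm{Nil}(R)^{3}=(0)$, $\mathrm{Nil}(R)^{2}\subseteq I_{0}$, and $Z(R)=\mathrm{Ann}_R(I_{0})$ is maximal because $I_{0}$ is simple), and the embedding $\mathrm{Nil}(R)/\mathrm{Nil}(R)^{2}\hookrightarrow I_{0}$ still needs an orthogonality argument exploiting that no two leaves annihilate each other; none of this appears in your write-up, nor do you treat the case $A_{I}(R)=\Bbb A\Bbb G(R)$ separately. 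So the implication from (6) back to the rest --- the one place where the paper itself leans (somewhat loosely) on Theorem \ref{thm8} in its $(6)\Rightarrow(1)$ step --- remains unproved in your proposal.
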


\begin{proof}
{$(1) \Rightarrow (2)$ It is clear.

\noindent {\rm$(2) \Rightarrow (3)$} It follows from Corollary \ref{cog13}.

\noindent {\rm$(3) \Rightarrow (4)$} By Theorems \ref{th2}, \ref{grith} and
 \cite[Theorem 3.4]{A3}, $\Bbb A \Bbb G (R) =A_{I} (R)$ is a star graph and
 $R$ has at most two non-zero nilpotent ideals.
 Suppose that $R$ has exactly one nilpotent ideal.
 It is not hard to see  that, in this case, $\mathrm{Nil}(R)$ is the only
 nilpotent ideal of $R$.  By Remark \ref{rema123}, $\mathrm{Nil}(R)$ is a minimal ideal of $R$ and so $Z(R) \neq \mathrm{Nil} (R)$. We need only to show that $\mathrm{Nil}(R)$
 is a prime ideal of $R$.
 Assume that $I,J \in \Bbb A(R)^{\ast} \setminus \lbrace \mathrm{Nil}(R) \rbrace$.
 Since $\Bbb A \Bbb G (R)$ is a star graph, we conclude that $IJ \neq (0)$.
 We show that $xy\neq 0$, for every $x \in I,~ y \in J$
 where $x,y \notin \mathrm{Nil}(R)^{\ast}$. Assume to the contrary,
 $xy=0$. Thus $(Rx)(Ry)=(0)$. If $Rx \neq Ry$, then
 $\mathrm{Nil}(R)-\hspace{-.2cm}-Rx-\hspace{-.2cm}-Ry-\hspace{-.2cm}-\mathrm{Nil}(R)$
 is a cycle  in $\Bbb A \Bbb G (R)$, a contradiction.
 Thus $Rx=Ry$, and hence $(Rx)^{2}=(0)$ which means that $Rx \subseteq \mathrm{Nil}(R)$ is
 a nilpotent ideal of $R$, a contradiction. Since $\mathrm{Nil}(R)$ is the center
 of $\Bbb A \Bbb G (R)$, $\mathrm{Nil}(R)Z(R)=(0)$ (We note that by \cite[Theorem 2.2]{beh1}, $Z(R)$ is a vertex) and  $xy \neq 0$
 for every $x,y \in Z(R)^{\ast} \setminus (\mathrm{Nil}(R))^{\ast}$, as desired.

 Now, assume that $R$ has exactly two nilpotent ideals, say $I$ and $J$ where $I$
 is the center of $\Bbb A \Bbb G (R)$. If $J$ is a minimal
 ideal, then
 $I-\hspace{-.2cm}-I+J-\hspace{-.2cm}-J-\hspace{-.2cm}-I$ is a cycle
 in $\Gamma ^{\prime} _{\mathrm{Ann}} (R)$ by Theorem \ref{th2}, a contradiction.
If $J$ is not a minimal ideal and $I\nsubseteq J$, then there exists $J^{\prime} \subseteq J$
and so $J^{\prime}$ is a nilpotent ideal of $R$. Therefore
$I-\hspace{-.2cm}-J-\hspace{-.2cm}-J^{\prime}-\hspace{-.2cm}-I$
is a cycle of length three in $A_{I} (R)$ by Theorem \ref{th2},
a contradiction. Hence $I  \subsetneqq J$.
If $x \in \mathrm{Nil}(R)$, then $Rx \subseteq J$ and so
 $J = \mathrm{Nil}(R)$. We claim that $J^{2}\neq(0)$. If $J^{2}=(0)$, then $J \subseteq \mathrm{Ann}(J)$.
 By the proof of  part 2 of Theorem \ref{thm8},
 $\mathrm{Ann}(J)=I$. Therefore $J \subseteq I$, a contradiction and so the claim is proved.
 Since $J^{2}$ is a nilpotent ideal of $R$, either
$J^{2}=I$ or $J^{2}=J$. If $J^{2}=J$, then
by Nakayama$^{,}$s Lemma, $J=(0)$, which is impossible. Consequently,  $J^{2} =I$.
Now we show that $\mathrm{Nil}(R)= Z(R)$.
Suppose that $x \in Z(R) \setminus \mathrm{Nil}(R)$. Thus $Rx$ is an annihilating
ideal of $R$, and hence $Rx \in \Bbb A(R)^{\ast}$. If $J(Rx)=(0)$,
then $I-\hspace{-.2cm}-J-\hspace{-.2cm}-Rx-\hspace{-.2cm}-I$ is a cycle of length three
 in $\Bbb A \Bbb G (R)$, a contradiction. Thus, we may assume that $J(Rx) \neq (0)$.
 Since $J(Rx)$ is a nilpotent ideal of $R$ and $R$ has exactly two
 nilpotent ideals, either $J(Rx)=I$ or
 $J(Rx)=J$. If $JR(x)=J$, then $J^{2}(Rx)^{2}=J^{2}=I$.
Since $J^{2}(Rx)^{2}=I(Rx)^{2}=(0)$ and $I \neq (0)$, we conclude that $J(Rx)=I$. Set $K=I+Rx$. The equality $IK=I^{2}+I(Rx)=(0)$ implies that
 $K$ is a vertex of $\Bbb A \Bbb G (R)$. Clearly, $K \neq I , J$.
Thus $JK=J(I+Rx)=IJ+J(Rx)=I$ and hence $\mathrm{Ann}(JK)= \mathrm{Ann}(I) = Z(R)$.
In addition, by the proof of part 2 of Theorem \ref{thm8},
$\mathrm{Ann}(J)=\mathrm{Ann}(K)=I$. These facts mean that
$\mathrm{Ann}(JK) \neq \mathrm{Ann}(J) \cup \mathrm{Ann}(K)$. Therefore,
$I-\hspace{-.2cm}-J-\hspace{-.2cm}-K-\hspace{-.2cm}-I$ is a cycle of length three
 in $\Bbb A \Bbb G (R)$, a contradiction. So $Z(R)= \mathrm{Nil}(R)$ and since $I, J=\mathrm{Nil}(R)$ are
  nilpotent ideals of $R$, we deduce that $\vert \Bbb A(R)^{\ast} \vert = 2 $.

\noindent {\rm$(4) \Rightarrow (5)$} Suppose that $\mathrm{Nil}(R)$ is a prime ideal.
Let $Z(R) = \mathrm{Nil} (R)$ and
$\vert \Bbb A(R)^{\ast} \vert = 2 $.
Then  $A_{I} (R) \cong K_{1,1}$.
Now, let $Z(R) \neq \mathrm{Nil} (R)$ and $\mathrm{Nil}(R)$ be a minimal ideal of $R$. Since
$\mathrm{Nil}(R)$ is prime, if $xy=0$ where $x,y \neq 0$, then
$x \in (\mathrm{Nil}(R))^{\ast}$ or $y \in (\mathrm{Nil}(R))^{\ast}$.
Therefore if $I,J \in \Bbb A(R)^{\ast} \setminus \mathrm{Nil}(R)$, then $IJ \neq (0)$. Thus
$\Bbb A \Bbb G (R)$ is a star graph with $\mathrm{Nil}(R)$ as the center. It is clear $\mathrm{Ann}(I)=\mathrm{Ann}(J)=\mathrm{Nil}(R)$, for every
 $I,J \neq \mathrm{Nil}(R)$.
Hence $\mathrm{Ann}(IJ)=\mathrm{Ann}(I) \cup \mathrm{Ann}(J)$, for every $I,J \in \Bbb A(R)^{\ast}$
where $I,J \neq \mathrm{Nil}(R)$, i.e., $A_{I} (R)$
is also star graph. If $A_{I} (R) \neq K_{1,\infty}$,
then $R$ has only finitely many ideals, by  \cite [theorem 1.4]{beh1}.
This implies that $R$ is an Artinian ring. Since $\Bbb A \Bbb G(R)$ is a star graph and $\mathrm{Nil}(R)$ is
a minimal prime ideal, we get a contradiction (See \cite[Theorem 2.6]{beh1}).
Therefore, $\Bbb A \Bbb G (R) \cong K_{1,\infty}$ and so the proof is complete.

\noindent {\rm$(5) \Rightarrow (6)$} It is obvious.

\noindent {\rm$(6) \Rightarrow (1)$} Since $\Bbb A \Bbb G (R)$ is a star graph
and $\Bbb A \Bbb G (R) \ncong K_{1,n}$ for some $1 < n < \infty $, $A_{I} (R) = \Bbb A \Bbb G (R)$ by part 2 of Theorem \ref{thm8}.
Thus $A_{I} (R)$ is a star graph.
}
\end{proof}


\noindent The last example of this paper provides a ring $R$ such that $\Bbb A \Bbb G (R) =A_{I} (R)=K_{1,\infty}$.

\begin{example}  Let $R=\Bbb Z_{2} [X,Y]/ (X^{2}, XY)$ and let $x=X+(X^{2}+XY)$,
$y=Y+(X^{2}+XY) \in R$. Then $Z(R)=(x, y)R$, $\mathrm{Nil}(R)=(x)R=\lbrace 0, x \rbrace$,
and $Z(R) \neq \mathrm{Nil}(R)$. It is clear that $\mathrm{Nil}(R)$ is a minimal prime ideal
of $R$ and $\Bbb A \Bbb G (R) =A_{I} (R)=K_{1,\infty}$.
\end{example}

%


{}

\end{document}